\newtheorem{theorem}{Theorem}[section]
\newtheorem{proposition}[theorem]{Proposition}
\newtheorem{remark}[theorem]{Remark}
\begin{document}
\setlength\arraycolsep{2pt}
\title[On a Computable Estimator of the Drift Parameter in Fractional SDE]{On a Computable Skorokhod's Integral Based Estimator of the Drift Parameter in Fractional SDE}
\author{Nicolas MARIE$^{\dag}$}
\address{$^{\dag}$Laboratoire Modal'X, Universit\'e Paris Nanterre, Nanterre, France}
\email{nmarie@parisnanterre.fr}
\keywords{Fractional Brownian motion; Least squares estimator; Malliavin calculus; Stochastic differential equations}
\date{}
\maketitle
\noindent
%

% Abstract.

%
\begin{abstract}
This paper deals with a Skorokhod's integral based least squares type estimator $\widehat\theta_N$ of the drift parameter $\theta_0$ computed from $N\in\mathbb N^*$ (possibly dependent) copies $X^1,\dots,X^N$ of the solution $X$ of $dX_t =\theta_0b(X_t)dt +\sigma dB_t$, where $B$ is a fractional Brownian motion of Hurst index $H\in (1/3,1)$. On the one hand, some convergence results are established on $\widehat\theta_N$ when $H = 1/2$. On the other hand, when $H\neq 1/2$, Skorokhod's integral based estimators as $\widehat\theta_N$ cannot be computed from data, but in this paper some convergence results are established on a computable approximation of $\widehat\theta_N$.
\end{abstract}
\tableofcontents
%

% Section : Introduction.

%
\section{Introduction}\label{section_introduction}
Consider the stochastic differential equation
\begin{equation}\label{main_equation}
X_t = x_0 +\theta_0\int_{0}^{t}b(X_s)ds +\sigma B_t
\textrm{ $;$ }t\in [0,T],
\end{equation}
where $T > 0$ is fixed, $x_0\in\mathbb R$, $B = (B_t)_{t\in [0,T]}$ is a fractional Brownian motion of Hurst index $H\in (1/3,1)$, $b\in C^1(\mathbb R)$ (resp. $b\in C^2(\mathbb R)$) with bounded derivative(s) when $H\in [1/2,1)$ (resp. $H\in (1/3,1/2)$), $\sigma\in\mathbb R^*$ and $\theta_0\in\mathbb R$ is an unknown parameter to estimate. Under these conditions, Equation (\ref{main_equation}) has a unique (pathwise) solution $X = (X_t)_{t\in [0,T]}$. Throughout this paper, the parameters $H$ and $\sigma$ are assumed to be known.
\\
\\
The oldest kind of estimators of the drift function is based on the long-time behavior of the solution of Equation (\ref{main_equation}) or on small noise asymptotics. For $H = 1/2$, the reader may refer to the monograph \cite{KUTOYANTS04} written by Y. Kutoyants on long-time behavior based estimators, and to Kutoyants \cite{KUTOYANTS94} on small noise asymptotics based estimators. For $H\neq 1/2$, see Kubilius et al. \cite{KMR17}, Kleptsyna \& Le Breton \cite{KL01}, Tudor \& Viens \cite{TV07}, Hu \& Nualart \cite{HN10}, Neuenkirch \& Tindel \cite{NT14}, Hu et al. \cite{HNZ19}, Marie \& Raynaud de Fitte \cite{MRF21}, etc. on parametric long-time behavior based estimators, and see Saussereau \cite{SAUSSEREAU14} and Comte \& Marie \cite{CM19} on nonparametric ones. For small noise asymptotics based estimators when $H\neq 1/2$, the reader may refer to Mishra \& Prakasa Rao \cite{MPR11}, Marie \cite{MARIE20}, Nakajima \& Shimizu \cite{NS22}, etc. The stochastic integral involved in the definition of the estimators studied in \cite{HN10}, \cite{HNZ19}, \cite{MRF21} and \cite{CM19} is taken in the sense of Skorokhod. To be not computable from one observation of $X$ is the major drawback of the Skorokhod integral with respect to $X$. Precisely, the divergence operator from which the Skorokhod integral with respect to $B$ is defined is the adjoint of the Malliavin derivative. In general, this definition doesn't allow to compute the Skorokhod integral. The Skorokhod integral with respect to $B$ (or to $X$) can also be written as the limit of some Riemann's sums, where the usual product is replaced by the Wick product, which doesn't allow to compute it either (see Biagini et al. \cite{BHOZ08}, Subsection 2.2). One of the main purposes of our paper is to bypass this difficulty in the estimation framework presented below because the Skorokhod integral is a nice generalization of It\^o's integral, tailor-made for advanced statistical investigations.
\\
\\
For $H = 1/2$, a new kind of estimators of the drift function has been investigated since several years: those computed from $N$ copies $X^1,\dots,X^N$ of $X$ observed on $[0,T]$ with $T > 0$ fixed but $N\rightarrow\infty$. The major part of the literature deals with estimators based on independent copies of $X$ (see Comte \& Genon-Catalot \cite{CGC20}, Denis et al. \cite{DDM21}, Marie \& Rosier \cite{MR22}, etc.), but some recent papers are also devoted to estimators based on dependent copies (see Della Maestra \& Hoffmann \cite{DMH22} and Comte \& Marie \cite{CM22}). Copies based estimators are well-adapted to some situations difficult to manage with long-time behavior based estimators:
\begin{itemize}
 \item Assume that $X$ models the elimination process of a drug administered to one people, and assume that in a clinical-trial involving $N$ patients, $X^i$ models the elimination process of the same drug for the $i$-th patient. So, $X^1,\dots,X^N$ are independent copies of $X$, and then copies based estimators of $\theta_0$ are tailor-made in such situation. The reader may refer to Donnet \& Samson \cite{DS13}, Section 5 on such estimators in (population) pharmacokinetic/pharmacodynamic models. In fact, for any biological or physical process which can be recorded independently in $N$ individuals of same kind and modeled by $X$, independent copies based estimators of $\theta_0$ are appropriate.
 \item Consider a financial market with $N$ interacting risky assets of same kind and assume that the $i$-th asset is modeled by the $i$-th copy $X^i$ of the solution $X$ of Equation (\ref{main_equation}). Here, $X^1,\dots,X^N$ are not independent, but copies based estimators of the drift function remain appropriate (see Comte \& Marie \cite{CM22}). Such financial market model has been investigated in Duellmann et al. \cite{DKK10}. In this example, $H = 1/2$ in order to be able to avoid arbitrage opportunities.
\end{itemize}
For $H\neq 1/2$, Comte \& Marie \cite{CM21} and Marie \cite{MARIE23} are the only two references on such estimators up to our knowledge.
\\
\\
Our paper deals with the least squares type estimator
\begin{displaymath}
\widehat\theta_N :=
\left(\sum_{i = 1}^{N}\int_{0}^{T}b(X_{s}^{i})^2ds\right)^{-1}
\left(\sum_{i = 1}^{N}\int_{0}^{T}b(X_{s}^{i})\delta X_{s}^{i}\right)
\quad {\rm of}\quad\theta_0,
\end{displaymath}
where $N\in\mathbb N^*$, $X^i :=\mathcal I(x_0,B^i)$ for every $i\in\{1,\dots,N\}$, $B^1,\dots,B^N$ are copies of $B$, $\mathcal I(.)$ is the It\^o map for Equation (\ref{main_equation}), and the stochastic integral is taken in the sense of Skorokhod. Since $\widehat\theta_N$ is not computable when $H > 1/2$, our paper also deals with the estimator $\widetilde\theta_N$ approximating $\widehat\theta_N$ and defined as a fixed point:
\begin{eqnarray}
 \label{computable_estimator}
 \widetilde\theta_N & = &
 \frac{1}{NTD_N}\sum_{i = 1}^{N}
 \left[\int_{0}^{T}b(X_{s}^{i})
 dX_{s}^{i}\right.\\
 & &
 \hspace{3cm}
 \left. -
 \alpha_H\sigma^2\int_{0}^{T}\int_{0}^{t}
 b'(X_{t}^{i})\exp\left(\widetilde\theta_N
 \int_{s}^{t}b'(X_{u}^{i})du\right)|t - s|^{2H - 2}dsdt\right]
 \nonumber
\end{eqnarray}
where $\alpha_H := H(2H - 1)$,
\begin{displaymath}
D_N :=\frac{1}{NT}\sum_{i = 1}^{N}\int_{0}^{T}b(X_{s}^{i})^2ds
\end{displaymath}
and the stochastic integral in the right-hand side of Equation (\ref{computable_estimator}) is taken pathwise (in the sense of Young). The main purposes of our paper are:
\begin{enumerate}
 \item For $H = 1/2$, to provide an asymptotic confidence interval for $\widehat\theta_N$ when $X^1,\dots,X^N$ are independent, and then to establish risk bounds on truncated versions of $\widehat\theta_N$ and of a discrete-time approximation of $\widehat\theta_N$ when some copies of $X$ may depend on each other (see Section \ref{section_BM}). In Section \ref{section_BM}, $X$ is the solution of a stochastic differential equation driven by a multiplicative noise (in the sense of It\^o).
 \item For $H > 1/2$, to prove the existence and the uniqueness of the (fixed point) computable estimator $\widetilde\theta_N$ defined by (\ref{computable_estimator}), to establish the consistency and to provide an asymptotic confidence interval for $\widetilde\theta_N$ when $X^1,\dots,X^N$ are independent, and then to establish a risk bound on a truncated version of $\widetilde\theta_N$ when some copies of $X$ may depend on each other (see Section \ref{section_fBm}). In this case, $\widehat\theta_N$ is an auxiliary estimator.
 \item For $H\in (1/3,1/2)$, to extend the definition of the (fixed point) computable estimator $\widetilde\theta_N$ thanks to the relationship between the rough integral with respect to $B$ and the Skorokhod integral recently provided in Song \& Tindel \cite{ST22} (see Section \ref{section_fBm_rough}).
\end{enumerate}
Finally, Section \ref{section_numerical_experiments} deals with some numerical experiments on $\widetilde\theta_N$ when $H > 1/2$.
%

% Section : Case H = 1/2.

%
\section{Case $H = 1/2$}\label{section_BM}
Throughout this section, $H = 1/2$, and then the Skorokhod integral with respect to $B$ coincides with It\^o's integral on the space $\mathbb H^2$ of the adapted processes $U = (U_t)_{t\in [0,T]}$ such that
\begin{displaymath}
\int_{0}^{T}\mathbb E(U_{s}^{2})ds <\infty
\quad\textrm{(see Nualart \cite{NUALART06}, Proposition 1.3.11)}.
\end{displaymath}
In fact, in this section, there is no significant additional difficulties to consider a stochastic differential equation driven by a multiplicative noise. So, in the sequel, $(B^1,\dots,B^N)$ is a $N$-dimensional Brownian motion defined on a probability space $(\Omega,\mathcal F,\mathbb P)$, equipped with the filtration generated by $(B^1,\dots,B^N)$, and
\begin{equation}\label{SDE_BM}
X_{t}^{i} = x_0 +\theta_0\int_{0}^{t}b(X_{s}^{i})ds +\int_{0}^{t}\sigma(X_{s}^{i})\delta B_{s}^{i}
\textrm{ $;$ }t\in [0,T]\textrm{, }i\in\{1,\dots,N\},
\end{equation}
where $b,\sigma\in C^1(\mathbb R)$, $\sigma$, $b'$ and $\sigma'$ are bounded, $|\sigma(.)|\geqslant\mu$ with $\mu > 0$, and the stochastic integral is taken in the sense of It\^o. Under these conditions, Equation (\ref{SDE_BM}) has a unique (strong) solution $(X_{t}^{1},\dots,X_{t}^{N})_{t\in [0,T]}$. Moreover, consider
\begin{displaymath}
\mathcal R_N :=
\{(i,k)\in\{1,\dots,N\}^2 : i\neq k
\textrm{ and $X^i$ is not independent of $X^k$}\}.
\end{displaymath}
%

% Remark : Remark on \mathcal R_N.

%
\begin{remark}\label{remark_R_N}
Let us make some remarks about $\mathcal R_N$:
\begin{enumerate}
 \item Since $|\sigma(.)| > 0$, and since $B^i$ is the only "source of randomness" in the definition of $X^i$ for every $i\in\{1,\dots,N\}$,
 \begin{displaymath}
 \mathcal R_N =
 \{(i,k)\in\{1,\dots,N\}^2 : i\neq k
 \textrm{ and $B^i$ is not independent of $B^k$}\}.
 \end{displaymath}
 \item Assume that there exists a correlation matrix $R$ such that, for every $i,k\in\{1,\dots,N\}$,
 \begin{displaymath}
 \mathbb E(B_{s}^{i}B_{t}^{k}) = R_{i,k}(s\wedge t)
 \textrm{ $;$ }\forall s,t\in [0,T].
 \end{displaymath}
 This leads to $d\langle B^i,B^k\rangle_t = R_{i,k}dt$ for every $i,k\in\{1,\dots,N\}$, and since $B^1,\dots,B^N$ are Gaussian processes,
 \begin{displaymath}
 \mathcal R_N =
 \{(i,k)\in\{1,\dots,N\}^2 : i\neq k\quad\textrm{and}\quad R_{i,k}\neq 0\}.
 \end{displaymath}
 Moreover, from continuous-time observations, $\sigma(.)$ may be assumed to be known, and then to determine the matrix $R$ is not a statistical problem. Indeed, since $|\sigma(.)| > 0$, for every $i,k\in\{1,\dots,N\}$,
 \begin{displaymath}
 R_{i,k} =\frac{\langle X^i,X^k\rangle_T}{\displaystyle{\int_{0}^{T}\sigma(X_{s}^{i})\sigma(X_{s}^{k})ds}}.
 \end{displaymath}
 \end{enumerate}
\end{remark}
\noindent
When $\mathcal R_N =\emptyset$, this section deals with an asymptotic confidence interval for $\widehat\theta_N$ (see Proposition \ref{ACI_BM}). When $\mathcal R_N\neq\emptyset$, a risk bound on a truncated version $\widehat\theta_{N}^{\mathfrak d}$ of $\widehat\theta_N$ is provided in Proposition \ref{risk_bound_BM}, and Proposition \ref{risk_bound_discrete_time_BM} deals with a risk bound on a discrete-time approximation of $\widehat\theta_{N}^{\mathfrak d}$.
\\
\\
Since $|\sigma(.)|\geqslant\mu$, and since $(b,\sigma)$ is Lipschitz continuous (because $b'$ and $\sigma'$ are bounded), for every $t\in (0,T]$, the common probability distribution of $X_{t}^{1},\dots,X_{t}^{N}$ has a density $f_t$ with respect to Lebesgue's measure such that, for every $x\in\mathbb R$,
\begin{equation}\label{Gaussian_bound_density_BM}
f_t(x)\leqslant
\mathfrak c_{0.5}t^{-\frac{1}{2}}
\exp\left[-\mathfrak m_{0.5}\frac{(x - x_0)^2}{t}\right]
\end{equation}
where $\mathfrak c_{0.5}$ and $\mathfrak m_{0.5}$ are positive constants depending on $T$ but not on $t$ and $x$ (see Menozzi et al. \cite{MPZ21}, Theorem 1.2). Then, $t\mapsto f_t(x)$ belongs to $\mathbb L^1([0,T])$, which legitimates to consider the density function $f$ defined by
\begin{displaymath}
f(x) :=\frac{1}{T}\int_{0}^{T}f_s(x)ds
\textrm{ $;$ }\forall x\in\mathbb R.
\end{displaymath}
Moreover, since $b'$ is bounded (and then $b$ has linear growth), still by Inequality (\ref{Gaussian_bound_density_BM}),
\begin{displaymath}
|b|^{\alpha} := |b(.)|^{\alpha}\in
\mathbb L^2(\mathbb R,f(x)dx)
\textrm{ $;$ }
\forall\alpha\in\mathbb R_+.
\end{displaymath}
{\bf Notation.} The usual norm on $\mathbb L^2(\mathbb R,f(x)dx)$ is denoted by $\|.\|_f$.
\\
\\
First, when $\mathcal R_N =\emptyset$, the following proposition provides an asymptotic confidence interval for $\widehat\theta_N$.
%

% Proposition : Asymptotic confidence interval (H = 1/2 and R = I).

%
\begin{proposition}\label{ACI_BM}
If $\mathcal R_N =\emptyset$, then
\begin{displaymath}
\lim_{N\rightarrow\infty}
\mathbb P\left(\theta_0\in\left[
\widehat\theta_N -\frac{\overline Y_{N}^{1/2}}{\sqrt ND_N}u_{1 -\frac{\alpha}{2}}
\textrm{ $;$ }
\widehat\theta_N +\frac{\overline Y_{N}^{1/2}}{\sqrt ND_N}u_{1 -\frac{\alpha}{2}}\right]\right) =
1 -\alpha
\end{displaymath}
for every $\alpha\in (0,1)$, where $u_. :=\phi^{-1}(.)$, $\phi$ is the standard normal distribution function, and
\begin{displaymath}
\overline Y_N :=\frac{1}{NT^2}\sum_{i = 1}^{N}\int_{0}^{T}b(X_{s}^{i})^2\sigma(X_{s}^{i})^2ds.
\end{displaymath}
\end{proposition}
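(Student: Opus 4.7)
The plan is to exhibit $\widehat\theta_N - \theta_0$ as the ratio of a (rescaled) martingale-type sum to $D_N$, then combine the classical i.i.d.\ CLT with a law of large numbers and Slutsky's lemma.

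First, using the SDE (\ref{SDE_BM}) and the fact that for $H = 1/2$ the Skorokhod integral coincides with It\^o's integral on adapted square-integrable processes, I would substitute
\begin{displaymath}
\int_{0}^{T} b(X_{s}^{i})\delta X_{s}^{i}
=
\theta_0\int_{0}^{T} b(X_{s}^{i})^2 ds
+
\int_{0}^{T} b(X_{s}^{i})\sigma(X_{s}^{i})dB_{s}^{i}
\end{displaymath}
into the definition of $\widehat\theta_N$ to obtain
\begin{displaymath}
\sqrt N\,(\widehat\theta_N -\theta_0)
=
\frac{1}{T D_N}\cdot\frac{1}{\sqrt N}\sum_{i = 1}^{N} M_{T}^{i},
\qquad
M_{T}^{i} :=\int_{0}^{T} b(X_{s}^{i})\sigma(X_{s}^{i})dB_{s}^{i}.
\end{displaymath}
When $\mathcal R_N =\emptyset$, the random variables $M_{T}^{1},\dots,M_{T}^{N}$ are i.i.d., centered, with finite variance $v :=\mathbb E\int_{0}^{T} b(X_{s}^{1})^2\sigma(X_{s}^{1})^2 ds$ (finite because $\sigma$ is bounded and $b$ has linear growth, so that the Gaussian bound (\ref{Gaussian_bound_density_BM}) applies).

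Second, by the classical i.i.d.\ central limit theorem,
\begin{displaymath}
\frac{1}{\sqrt N}\sum_{i = 1}^{N} M_{T}^{i}\xrightarrow[N\to\infty]{\mathcal L}\mathcal N(0,v).
\end{displaymath}
In parallel, the strong law of large numbers applied to the i.i.d.\ sequences $\bigl(\int_{0}^{T} b(X_{s}^{i})^2 ds\bigr)_i$ and $\bigl(\int_{0}^{T} b(X_{s}^{i})^2\sigma(X_{s}^{i})^2 ds\bigr)_i$ gives
\begin{displaymath}
D_N\xrightarrow[N\to\infty]{\mathrm{a.s.}}\|b\|_{f}^{2}
\quad\text{and}\quad
\overline Y_N\xrightarrow[N\to\infty]{\mathrm{a.s.}}\frac{v}{T^2}.
\end{displaymath}

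Third, Slutsky's lemma combined with the two displays above yields
\begin{displaymath}
\frac{\sqrt N\,D_N}{\overline Y_{N}^{1/2}}(\widehat\theta_N -\theta_0)
=
\frac{1}{T\overline Y_{N}^{1/2}}\cdot\frac{1}{\sqrt N}\sum_{i = 1}^{N} M_{T}^{i}
\xrightarrow[N\to\infty]{\mathcal L}
\mathcal N(0,1),
\end{displaymath}
from which the announced asymptotic confidence interval follows by the definition of $u_{1 -\alpha/2}$ and the symmetry of the standard normal distribution.

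The only delicate point is justifying the a.s.\ limit $D_N\to\|b\|_{f}^{2}>0$ (so that dividing by $D_N$ is harmless asymptotically) and the identification of $v/T^2$ as the a.s.\ limit of $\overline Y_N$; both follow from Fubini's theorem together with the integrability provided by the Gaussian bound (\ref{Gaussian_bound_density_BM}) and the boundedness of $\sigma$, $b'$, $\sigma'$. No Lindeberg-type argument is needed thanks to the i.i.d.\ structure arising from $\mathcal R_N =\emptyset$.
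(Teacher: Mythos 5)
Your proposal is correct and follows essentially the same route as the paper: decompose $\widehat\theta_N-\theta_0$ via the SDE into a ratio of the i.i.d.\ centered It\^o integrals over $D_N$, apply the classical CLT and the law of large numbers (the paper uses convergence in probability where you invoke the strong law, an immaterial difference), identify the limit of $\overline Y_N$ through the It\^o isometry, and conclude by Slutsky's lemma. No gaps.
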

%

% Proof.

%
\begin{proof}
Consider
\begin{displaymath}
U_N :=\frac{1}{N}\sum_{i = 1}^{N}Z^i,
\end{displaymath}
where $Z^1,\dots,Z^N$ are the random variables defined by
\begin{displaymath}
Z^i :=\frac{1}{T}\int_{0}^{T}b(X_{s}^{i})\sigma(X_{s}^{i})\delta B_{s}^{i}
\textrm{ $;$ }\forall i\in\{1,\dots,N\}.
\end{displaymath}
First, since $\mathcal R_N =\emptyset$,
\begin{displaymath}
D_N =\frac{1}{NT}\sum_{i = 1}^{N}\int_{0}^{T}b(X_{s}^{i})^2ds
\xrightarrow[N\rightarrow\infty]{\mathbb P}
\mathbb E\left(\frac{1}{T}\int_{0}^{T}b(X_{s}^{1})^2ds\right) =\|b\|_{f}^{2}
\end{displaymath}
by the (usual) law of large numbers, and
\begin{displaymath}
\sqrt NU_N\xrightarrow[N\rightarrow\infty]{\mathcal D}\mathcal N(0,{\rm var}(Z^1))
\end{displaymath}
by the (usual) central limit theorem. Then, since $\delta X_{t}^{i} =\theta_0b(X_{t}^{i})dt +\sigma(X_{t}^{i})\delta B_{t}^{i}$ for every $i\in\{1,\dots,N\}$, and by Slutsky's lemma,
\begin{displaymath}
\sqrt N(\widehat\theta_N -\theta_0) =
\sqrt N\frac{U_N}{D_N}
\xrightarrow[N\rightarrow\infty]{\mathcal D}
\mathcal N\left(0,\frac{{\rm var}(Z^1)}{\|b\|_{f}^{4}}\right).
\end{displaymath}
Now, by the law of large numbers and the isometry property of It\^o's integral,
\begin{displaymath}
\overline Y_N
\xrightarrow[N\rightarrow\infty]{\mathbb P}
\frac{1}{T^2}\mathbb E\left(\int_{0}^{T}b(X_{s}^{1})^2\sigma(X_{s}^{1})^2ds\right) =
{\rm var}(Z^1).
\end{displaymath}
So,
\begin{displaymath}
\frac{\sqrt ND_N}{\overline Y_{N}^{1/2}}(\widehat\theta_N -\theta_0)
\xrightarrow[N\rightarrow\infty]{\mathcal D}\mathcal N(0,1)
\end{displaymath}
by Slutsky's lemma, and then
\begin{displaymath}
\lim_{N\rightarrow\infty}
\mathbb P\left(\frac{\sqrt ND_N}{\overline Y_{N}^{1/2}}|\widehat\theta_N -\theta_0|
\leqslant x\right) = 2\phi(x) - 1
\textrm{ $;$ }\forall x\in\mathbb R.
\end{displaymath}
\end{proof}
%

% Remark : Independent copies from one long-time path of the solution.

%
\begin{remark}\label{independent_copies_long_time_BM}
Assume that $\theta_0 > 0$ and that $b$ satisfies the following dissipativity condition:
\begin{equation}\label{dissipativity_condition}
\exists c > 0 :\forall x\in\mathbb R\textrm{, }b'(x)\leqslant -c.
\end{equation}
From one path of the solution $X$ of Equation (\ref{main_equation}) observed on $\mathbb R_+$, which seems to be a situation only appropriate for long-time behavior based estimators of $\theta_0$, one can construct $N$ independent copies of $X_{|[0,T]}$. To that purpose, consider the stopping times $\tau_1,\dots,\tau_N$ recursively defined by $\tau_1 = 0$ and
\begin{displaymath}
\tau_i =\inf\{t >\tau_{i - 1} + T : X_t = x_0\}
\textrm{ $;$ }i = 2,\dots,N
\end{displaymath}
with the convention $\inf(\emptyset) =\infty$. Since $\theta_0 > 0$ and $b$ fulfills (\ref{dissipativity_condition}), the scale density
\begin{displaymath}
s(.) :=\exp\left(-2\theta_0\int_{0}^{.}\frac{b(x)}{\sigma(x)^2}dx\right)
\end{displaymath}
satisfies
\begin{displaymath}
\int_{-\infty}^{0}s(x)dx =\int_{0}^{\infty}s(x)dx =\infty,
\end{displaymath}
and then $X$ is a recurrent Markov process by Khasminskii \cite{KHASMINSKII12}, Example 3.10. So, for any $i\in\{1,\dots,N\}$, $\mathbb P(\tau_i <\infty) = 1$ and one can consider the processes
\begin{displaymath}
B^i := (B_{\tau_i + t} - B_{\tau_i})_{t\in [0,T]}
\quad {\rm and}\quad
X^i := (X_{\tau_i + t})_{t\in [0,T]}.
\end{displaymath}
Since $B^1,\dots,B^N$ are independent Brownian motions by the strong Markov property, and since
\begin{displaymath}
X^i =\mathcal I(x_0,B^i)
\textrm{ $;$ }
\forall i\in\{1,\dots,N\},
\end{displaymath}
the processes $X^1,\dots,X^N$ are independent copies of $X_{|[0,T]}$.
\end{remark}
\noindent
Now, even when $\mathcal R_N\neq\emptyset$, the following proposition provides a suitable risk bound on the truncated estimator
\begin{displaymath}
\widehat\theta_{N}^{\mathfrak d} :=
\widehat\theta_N\mathbf 1_{D_N\geqslant\mathfrak d}
\quad {\rm with}\quad
\mathfrak d\in\Delta_f =\left(0,\frac{\|b\|_{f}^{2}}{2}\right].
\end{displaymath}
\noindent
The threshold $\mathfrak d$ is required in the proof of Proposition \ref{risk_bound_BM} in order to control the $\mathbb L^2$-risk of $\widehat\theta_{N}^{\mathfrak d}$ by the sum of those of $D_N$ and $D_N\widehat\theta_N$ up to a multiplicative constant. In practice, when possible, the threshold $\mathfrak d$ should be lower than the highest (easily) computable lower-bound on $\|b\|_{f}^{2}/2$ (see Remark \ref{remark_threshold_BM}).
%

% Proposition : Risk bound (H = 1/2).

%
\begin{proposition}\label{risk_bound_BM}
There exists a constant $\mathfrak c_{\ref{risk_bound_BM}} > 0$, not depending on $N$, such that
\begin{displaymath}
\mathbb E[(\widehat\theta_{N}^{\mathfrak d} -\theta_0)^2]
\leqslant
\frac{\mathfrak c_{\ref{risk_bound_BM}}}{N}
\left(1 +
\frac{|\mathcal R_N|}{N}\right).
\end{displaymath}
Precisely,
\begin{displaymath}
\mathfrak c_{\ref{risk_bound_BM}} =
\frac{1}{\mathfrak d^2}\left(
\frac{\|\sigma\|_{\infty}^{2}\|b\|_{f}^{2}}{T} +
\theta_{0}^{2}\|b^2\|_{f}^{2}\right).
\end{displaymath}
Moreover, if $|\mathcal R_N| = o(N^2)$, then both the truncated estimator $\widehat\theta_{N}^{\mathfrak d}$ and the main estimator $\widehat\theta_N$ are consistent.
\end{proposition}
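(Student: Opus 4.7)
The plan is to exploit the identity $D_N(\widehat\theta_N - \theta_0) = M_N$, where
\[
M_N := \frac{1}{NT}\sum_{i = 1}^{N}\int_{0}^{T}b(X_{s}^{i})\sigma(X_{s}^{i})\delta B_{s}^{i},
\]
which arises from plugging the SDE (\ref{SDE_BM}) into the numerator of $\widehat\theta_N$. Writing
\[
\widehat\theta_{N}^{\mathfrak d} - \theta_0 = (\widehat\theta_N - \theta_0)\mathbf 1_{\{D_N\geqslant\mathfrak d\}} - \theta_0\mathbf 1_{\{D_N <\mathfrak d\}}
\]
and bounding $D_N^{-1}$ by $\mathfrak d^{-1}$ on the good event, I would get
\[
\mathbb E[(\widehat\theta_{N}^{\mathfrak d} - \theta_0)^2]\leqslant \frac{\mathbb E[M_N^2]}{\mathfrak d^2} + \theta_{0}^{2}\mathbb P(D_N <\mathfrak d).
\]
The task then splits into two parallel decorrelation estimates under the $\mathcal R_N$-indexed dependence structure.

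For $\mathbb E[M_N^2]$, I would expand the square and note that cross terms with $i\neq k$, $(i,k)\notin\mathcal R_N$, vanish because $B^i$ and $B^k$ are independent and each $I_i := \int_0^T b(X_s^i)\sigma(X_s^i)\delta B_s^i$ is a zero-mean It\^o integral. The diagonal terms are controlled by It\^o's isometry, which together with the definition of $f$ gives $\mathbb E[I_i^2]\leqslant T\|\sigma\|_{\infty}^{2}\|b\|_{f}^{2}$, and the $\mathcal R_N$-indexed cross terms are handled by Cauchy-Schwarz, yielding the same bound. Summing produces $\mathbb E[M_N^2]\leqslant \tfrac{\|\sigma\|_\infty^2\|b\|_f^2}{NT}(1 + |\mathcal R_N|/N)$.

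For $\mathbb P(D_N<\mathfrak d)$, since $\mathbb E[D_N] = \|b\|_f^2$ and the restriction $\mathfrak d\leqslant \|b\|_f^2/2$ forces $\|b\|_f^2 - \mathfrak d\geqslant \mathfrak d$, Chebyshev yields
\[
\mathbb P(D_N <\mathfrak d)\leqslant \mathbb P\bigl(|D_N - \|b\|_f^2|\geqslant \mathfrak d\bigr)\leqslant \frac{{\rm var}(D_N)}{\mathfrak d^2}.
\]
An identical decorrelation argument applied to $Y_i := \tfrac{1}{T}\int_0^T b(X_s^i)^2 ds$ (Jensen's inequality to bound $\mathbb E[Y_i^2]\leqslant \|b^2\|_f^2$, Cauchy-Schwarz for the $\mathcal R_N$ cross covariances) gives ${\rm var}(D_N)\leqslant \tfrac{\|b^2\|_f^2}{N}(1 + |\mathcal R_N|/N)$. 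Plugging both bounds into the previous display and pulling out the common factor $\tfrac{1}{N}(1 + |\mathcal R_N|/N)$ delivers precisely the announced constant $\mathfrak c_{\ref{risk_bound_BM}}$.

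Consistency then follows at once: under $|\mathcal R_N| = o(N^2)$ the risk bound gives $L^2$ convergence of $\widehat\theta_N^{\mathfrak d}$ to $\theta_0$, and since $\{\widehat\theta_N\neq \widehat\theta_N^{\mathfrak d}\}\subset \{D_N<\mathfrak d\}$, whose probability vanishes by the Chebyshev estimate above, $\widehat\theta_N$ is consistent in probability as well. I expect no substantive obstacle beyond the bookkeeping of the $\mathcal R_N$-indexed contributions; the feature of the argument that is truly specific to $H = 1/2$ is that each $I_i$ is a zero-mean $L^2$ It\^o integral, so that genuinely independent cross terms vanish exactly rather than merely being estimated by Cauchy-Schwarz, and the It\^o isometry is available to give the sharp $T\|\sigma\|_\infty^2\|b\|_f^2$ bound on the diagonal.
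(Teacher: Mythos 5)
Your proposal is correct and follows essentially the same route as the paper: the decomposition $\widehat\theta_N=\theta_0+U_N/D_N$ with the stochastic term bounded via It\^o's isometry plus Cauchy--Schwarz on the $\mathcal R_N$-indexed cross terms, the Chebyshev bound on $\mathbb P(D_N<\mathfrak d)$ using $\mathfrak d\leqslant\|b\|_f^2/2$, and the inclusion $\{\widehat\theta_N\neq\widehat\theta_N^{\mathfrak d}\}\subset\{D_N<\mathfrak d\}$ for consistency. The only differences are cosmetic (your notation $M_N$ for the paper's $U_N$, and centering Chebyshev at $\mathfrak d$ rather than at $\|b\|_f^2/2$, which yields the same $\mathfrak d^{-2}$ factor).
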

%

% Proof.

%
\begin{proof}
The proof of Proposition \ref{risk_bound_BM} is dissected in two steps.
\\
\\
{\bf Step 1.} First of all, since $\delta X_{t}^{i} =\theta_0b(X_{t}^{i})dt +\sigma(X_{t}^{i})\delta B_{t}^{i}$ for every $i\in\{1,\dots,N\}$,
\begin{displaymath}
\widehat\theta_N =
\theta_0 +\frac{U_N}{D_N}
\quad {\rm with}\quad
U_N =
\frac{1}{NT}\sum_{i = 1}^{N}\int_{0}^{T}b(X_{s}^{i})\sigma(X_{s}^{i})\delta B_{s}^{i}.
\end{displaymath}
On the one hand, by Cauchy-Schwarz's inequality and the isometry property of It\^o's integral,
\begin{eqnarray}
 \mathbb E(U_{N}^{2}) & = &
 \frac{1}{N^2T^2}
 \sum_{i = 1}^{N}\mathbb E\left[
 \left(\int_{0}^{T}b(X_{s}^{i})\sigma(X_{s}^{i})\delta B_{s}^{i}\right)^2\right]
 \nonumber\\
 & &
 \hspace{2cm} +
 \frac{1}{N^2T^2}
 \sum_{(i,k)\in\mathcal R_N}\mathbb E\left(
 \left(\int_{0}^{T}b(X_{s}^{i})\sigma(X_{s}^{i})\delta B_{s}^{i}\right)
 \left(\int_{0}^{T}b(X_{s}^{k})\sigma(X_{s}^{k})\delta B_{s}^{k}\right)
 \right)
 \nonumber\\
 & \leqslant &
 \frac{1}{N^2T^2}\left(
 N\int_{0}^{T}\mathbb E(b(X_{s}^{1})^2\sigma(X_{s}^{1})^2)ds +
 |\mathcal R_N|\int_{0}^{T}\mathbb E(b(X_{s}^{1})^2\sigma(X_{s}^{1})^2)ds\right)
 \nonumber\\
 \label{risk_bound_BM_1}
 & \leqslant &
 \frac{\|\sigma\|_{\infty}^{2}}{NT}\left(
 1 +\frac{|\mathcal R_N|}{N}\right)
 \underbrace{\int_{-\infty}^{\infty}b(x)^2f(x)dx}_{=\|b\|_{f}^{2}}.
\end{eqnarray}
On the other hand,
\begin{displaymath}
\mathbb E(D_N) =
\frac{1}{T}\int_{0}^{T}\mathbb E(b(X_{s}^{1})^2)ds =\|b\|_{f}^{2},
\end{displaymath}
and then
\begin{eqnarray}
 \mathbb E[(D_N -\|b\|_{f}^{2})^2] =
 {\rm var}(D_N) & = &
 \frac{1}{N^2T^2}{\rm var}\left(
 \sum_{i = 1}^{N}\int_{0}^{T}b(X_{s}^{i})^2ds\right)
 \nonumber\\
 & = &
 \frac{1}{N}{\rm var}\left(\frac{1}{T}\int_{0}^{T}b(X_{s}^{1})^2ds\right)
 \nonumber\\
 & &
 \hspace{2cm} +
 \frac{1}{N^2}\sum_{(i,k)\in\mathcal R_N}{\rm cov}\left(
 \frac{1}{T}\int_{0}^{T}b(X_{s}^{i})^2ds,
 \frac{1}{T}\int_{0}^{T}b(X_{s}^{k})^2ds\right)
 \nonumber\\
 \label{risk_bound_BM_2}
 & \leqslant &
 \left(\frac{1}{N} +
 \frac{|\mathcal R_N|}{N^2}\right)
 \mathbb E\left[\left(\frac{1}{T}\int_{0}^{T}b(X_{s}^{1})^2ds\right)^2\right]
 \leqslant
 \frac{\|b^2\|_{f}^{2}}{N}
 \left(1 +
 \frac{|\mathcal R_N|}{N}\right).
\end{eqnarray}
Since
\begin{eqnarray*}
 \mathbb E[(\widehat\theta_{N}^{\mathfrak d} -\theta_0)^2] & = &
 \mathbb E[(\widehat\theta_N -\theta_0)^2\mathbf 1_{D_N\geqslant\mathfrak d}] +
 \theta_{0}^{2}\mathbb P(D_N <\mathfrak d)\\
 & \leqslant &
 \frac{1}{\mathfrak d^2}\mathbb E(U_{N}^{2}) +
 \theta_{0}^{2}\mathbb P\left(|D_N -\|b\|_{f}^{2}| >\frac{\|b\|_{f}^{2}}{2}\right)
 \leqslant
 \frac{1}{\mathfrak d^2}[\mathbb E(U_{N}^{2}) +
 \theta_{0}^{2}\mathbb E[(D_N -\|b\|_{f}^{2})^2]],
\end{eqnarray*}
by Inequalities (\ref{risk_bound_BM_1}) and (\ref{risk_bound_BM_2}),
\begin{displaymath}
\mathbb E[(\widehat\theta_{N}^{\mathfrak d} -\theta_0)^2]
\leqslant
\left(
\frac{\|\sigma\|_{\infty}^{2}\|b\|_{f}^{2}}{T} +
\theta_{0}^{2}\|b^2\|_{f}^{2}\right)
\frac{1}{\mathfrak d^2N}
\left(1 +
\frac{|\mathcal R_N|}{N}\right).
\end{displaymath}
{\bf Step 2.} Assume that $|\mathcal R_N| = o(N^2)$. First of all, the truncated estimator $\widehat\theta_{N}^{\mathfrak d}$ is consistent by Step 1. Now, since $\widehat\theta_N =\widehat\theta_{N}^{\mathfrak d}$ on $\{D_N\geqslant\mathfrak d\}$, for any $\varepsilon > 0$,
\begin{eqnarray*}
 \mathbb P(|\widehat\theta_N -\theta_0| >\varepsilon)
 & = &
 \mathbb P(\{|\widehat\theta_N -\theta_0| >\varepsilon\}
 \cap\{D_N <\mathfrak d\}) +
 \mathbb P(\{|\widehat\theta_{N}^{\mathfrak d} -\theta_0| >\varepsilon\}
 \cap\{D_N\geqslant\mathfrak d\})\\
 & \leqslant &
 \mathbb P(D_N <\mathfrak d) +
 \mathbb P(|\widehat\theta_{N}^{\mathfrak d} -\theta_0| >\varepsilon).
\end{eqnarray*}
On the one hand, since $\mathfrak d\in\Delta_f$, as already established in Step 1,
\begin{displaymath}
\mathbb P(D_N <\mathfrak d)\leqslant
\frac{\|b^2\|_{f}^{2}}{\mathfrak d^2N}
\left(1 +\frac{|\mathcal R_N|}{N}\right).
\end{displaymath}
On the other hand, by Markov's inequality and Step 1,
\begin{displaymath}
\mathbb P(|\widehat\theta_{N}^{\mathfrak d} -\theta_0| >\varepsilon)
\leqslant
\frac{1}{\varepsilon^2}\mathbb E[(\widehat\theta_{N}^{\mathfrak d} -\theta_0)^2]
\leqslant
\frac{\mathfrak c_{\ref{risk_bound_BM}}}{\varepsilon^2N}
\left(1 +
\frac{|\mathcal R_N|}{N}\right).
\end{displaymath}
Therefore,
\begin{displaymath}
\mathbb P(|\widehat\theta_N -\theta_0| >\varepsilon)
\leqslant
\frac{\mathfrak c_1}{N}\left(1 +
\frac{|\mathcal R_N|}{N}\right)
\quad {\rm with}\quad
\mathfrak c_1 =
\frac{\mathfrak c_{\ref{risk_bound_BM}}}{\varepsilon^2} +
\frac{\|b^2\|_{f}^{2}}{\mathfrak d^2}.
\end{displaymath}
In conclusion, since $|\mathcal R_N| = o(N^2)$,
\begin{displaymath}
\lim_{N\rightarrow\infty}
\mathbb P(|\widehat\theta_N -\theta_0| >\varepsilon) = 0.
\end{displaymath}
\end{proof}
%

% Remark : How to choose the threshold in practice (H = 1/2).

%
\begin{remark}\label{remark_threshold_BM}
By (the second part of) Proposition \ref{risk_bound_BM}, $\widehat\theta_N$ and $\widehat\theta_{N}^{\mathfrak d}$ are both consistent, which means that for large values of $N$, one should always consider our main estimator $\widehat\theta_N$, especially when $\mathcal R_N =\emptyset$ because Proposition \ref{ACI_BM} provides (in addition) an asymptotic confidence interval for $\widehat\theta_N$. In this last situation, as usual, $N\geqslant 30$ may be considered as large. However, there are non-asymptotic theoretical guarantees on the truncated estimator $\widehat\theta_{N}^{\mathfrak d}$ thanks to the risk bound provided in (the first part of) Proposition \ref{risk_bound_BM}, but there are no such guarantees on $\widehat\theta_N$. So, in the non-asymptotic framework, let us make some remarks on how to choose the threshold $\mathfrak d$ in $\Delta_f = (0,\|b\|_{f}^{2}/2]$. When possible, the following rule should be observed: since the constant $\mathfrak c_{\ref{risk_bound_BM}}$ is of order $\mathfrak d^{-2}$, in order to degrade as few as possible the theoretical guarantees on $\widehat\theta_{N}^{\mathfrak d}$ provided by the risk bound in Proposition \ref{risk_bound_BM}, one should take $\mathfrak d =\mathfrak d_{\max}$, where $\mathfrak d_{\max}$ is the highest (easily) computable lower-bound on $\|b\|_{f}^{2}/2$. This doesn't mean that smaller values of $\mathfrak d$ cannot give better numerical results, but then theoretical guarantees of Proposition \ref{risk_bound_BM} are degraded. In some situations, let us show how to find a computable lower-bound $\mathfrak d_{\max}$ on $\|b\|_{f}^{2}/2$:
\begin{enumerate}
 \item Assume that $b(.)^2\geqslant\mathfrak b$ with a known $\mathfrak b > 0$. Since
 \begin{displaymath}
 D_N =\frac{1}{NT}\sum_{i = 1}^{N}
 \int_{0}^{T}b(X_{s}^{i})^2ds
 \geqslant\mathfrak b
 \quad {\rm and}\quad
 \|b\|_{f}^{2} =\int_{-\infty}^{\infty}b(x)^2f(x)dx
 \geqslant\mathfrak b,
 \end{displaymath}
 one should obviously take $\mathfrak d_{\max} =\mathfrak b/2$, and
 \begin{displaymath}
 \widehat\theta_{N}^{\mathfrak d} =\widehat\theta_N
 \quad {\rm for}\quad
 \mathfrak d =\mathfrak d_{\max}.
 \end{displaymath}
 So, in this special situation, there are also theoretical guarantees on our main estimator $\widehat\theta_N$ for small values of $N$.
 \item Assume that $X^1,\dots,X^N$ are Ornstein-Uhlenbeck processes (i.e. $b = -{\rm Id}_{\mathbb R}$), and let $m_t = x_0e^{-\theta_0t}$ (resp. $\sigma_t > 0$) be the common average (resp. standard deviation) of $X_{t}^{1},\dots,X_{t}^{N}$ for every $t\in(0,T]$. Let us provide a suitable lower bound on $\|b\|_{f}^{2}/2$ when $x_0\neq 0$. By assuming that there exists a known constant $\theta_{\max} > 0$ such that $\theta_0\in (0,\theta_{\max}]$,
 \begin{eqnarray*}
  \|b\|_{f}^{2} & = &
  \frac{1}{T}\int_{0}^{T}\int_{-\infty}^{\infty}b(x)^2f_s(x)dxds\\
  & = &
  \frac{1}{T}\int_{0}^{T}\frac{1}{\sigma_s\sqrt{2\pi}}
  \int_{-\infty}^{\infty}x^2\exp\left(-\frac{(x - m_s)^2}{2\sigma_{s}^{2}}\right)dxds\\
  & = &
  \frac{1}{T}\int_{0}^{T}\frac{1}{\sqrt{2\pi}}
  \int_{-\infty}^{\infty}(m_s +\sigma_sy)^2\exp\left(-\frac{y^2}{2}\right)dyds\\
  & = &
  \frac{1}{T}\int_{0}^{T}
  \frac{m_{s}^{2}}{\sqrt{2\pi}}\int_{-\infty}^{\infty}\exp\left(-\frac{y^2}{2}\right)dyds\\
  & &
  \hspace{2cm} +
  \frac{2}{T}\int_{0}^{T}
  \frac{m_s\sigma_s}{\sqrt{2\pi}}\int_{-\infty}^{\infty}y\exp\left(-\frac{y^2}{2}\right)dyds\\
  & &
  \hspace{4cm} +
  \frac{1}{T}\int_{0}^{T}
  \frac{\sigma_{s}^{2}}{\sqrt{2\pi}}\int_{-\infty}^{\infty}y^2\exp\left(-\frac{y^2}{2}\right)dyds\\
  & = &
  \frac{1}{T}\int_{0}^{T}m_{s}^{2}ds +
  \frac{1}{T}\int_{0}^{T}\sigma_{s}^{2}ds
  \geqslant x_{0}^{2}e^{-2\theta_{\max}T}.
 \end{eqnarray*}
 Then, one should take
 \begin{displaymath}
 \mathfrak d_{\max} =
 \frac{x_{0}^{2}}{2}e^{-2\theta_{\max}T}.
 \end{displaymath}
\end{enumerate}
\end{remark}
%

% Remark : Remark on the risk bound (H = 1/2).

%
\begin{remark}\label{remark_risk_bound_BM}
If $|\mathcal R_N|\leqslant N$, then the $\mathbb L^2$-risk of $\widehat\theta_{N}^{\mathfrak d}$ remains of order $N^{-1/2}$ (parametric rate) as when $\mathcal R_N =\emptyset$. So, as in Comte and Marie \cite{CM22}, in the example of the financial market with $N$ interacting risky assets of same kind presented in the introduction section, our truncated estimator remains appropriate as long as the dependent assets are grouped in a "large enough" number of independent clusters. In the situation of Remark \ref{remark_R_N}.(2), this means that
 \begin{displaymath}
 R =
 \begin{pmatrix}
  R_1 & & (0)\\
   & \ddots & \\
  (0) & & R_{\frac{N}{q}}
 \end{pmatrix},
 \end{displaymath}
 where $q\in\mathbb N^*$, $N\in q\mathbb N^*$ and $R_1,\dots,R_{N/q}$ are $N/q$ correlation matrices of size $q\times q$.
\end{remark}
\noindent
Finally, for $n\in\mathbb N^*$, let us consider the following discrete-time approximations of $\widehat\theta_N$ and $\widehat\theta_{N}^{\mathfrak d}$ along the dissection $(t_0,\dots,t_n)$ of constant mesh $T/n$:
\begin{displaymath}
\widehat\theta_{N,n} :=\frac{V_{N,n}}{D_{N,n}}
\quad {\rm and}\quad
\widehat\theta_{N,n}^{\mathfrak d} :=
\widehat\theta_{N,n}
\mathbf 1_{D_{N,n}\geqslant\mathfrak d},
\end{displaymath}
where
\begin{eqnarray*}
 D_{N,n} & := &
 \frac{1}{NT}\sum_{i = 1}^{N}\sum_{j = 0}^{n - 1}
 b(X_{t_j}^{i})^2(t_{j + 1} - t_j)\\
 & &
 \hspace{2.5cm}{\rm and}\quad
 V_{N,n} :=
 \frac{1}{NT}\sum_{i = 1}^{N}\sum_{j = 0}^{n - 1}
 b(X_{t_j}^{i})(X_{t_{j + 1}} - X_{t_j}).
\end{eqnarray*}
The following proposition provides a suitable risk bound on $\widehat\theta_{N,n}^{\mathfrak d}$.
%

% Proposition : Risk bound (H = 1/2 and discrete-time).

%
\begin{proposition}\label{risk_bound_discrete_time_BM}
If $b$ is bounded, then there exists a constant $\mathfrak c_{\ref{risk_bound_discrete_time_BM}} > 0$, not depending on $N$, $n$ and $\mathfrak d$, such that
\begin{displaymath}
\mathbb E[(\widehat\theta_{N,n}^{\mathfrak d} -\theta_0)^2]
\leqslant
\frac{\mathfrak c_{\ref{risk_bound_discrete_time_BM}}}{\mathfrak d^2}\left(
\frac{1}{N}\left(1 +\frac{|\mathcal R_N|}{N}\right) +\frac{1}{n}\right).
\end{displaymath}
Else, for any $\varepsilon > 0$, there exists a constant $\mathfrak c_{\ref{risk_bound_discrete_time_BM}}(\varepsilon) > 0$, not depending on $N$, $n$ and $\mathfrak d$, such that
\begin{displaymath}
\mathbb E[(\widehat\theta_{N,n}^{\mathfrak d} -\theta_0)^2]
\leqslant
\frac{\mathfrak c_{\ref{risk_bound_discrete_time_BM}}(\varepsilon)}{\mathfrak d^2}\left(
\frac{1}{N}\left(1 +\frac{|\mathcal R_N|}{N}\right) +\frac{1}{n^{1 -\varepsilon}}\right).
\end{displaymath}
Moreover, by assuming that $|\mathcal R_N| = o(N^2)$,
\begin{itemize}
 \item If $b$ is bounded, then both the truncated estimator
 \begin{displaymath}
 \widehat\theta_{N,\psi(N)}^{\mathfrak d}
 \quad {\rm with}\quad
 \psi(N) := N\left(1 +\frac{|\mathcal R_N|}{N}\right)^{-1},
 \end{displaymath}
 and the main estimator $\widehat\theta_{N,\psi(N)}$, are consistent.
 \item If $b$ may be unbounded, then both the truncated estimator
 \begin{displaymath}
 \widehat\theta_{N,\psi_{\varepsilon}(N)}^{\mathfrak d}
 \quad {\rm with}\quad
 \psi_{\varepsilon}(N) :=
 \left[N\left(1 +\frac{|\mathcal R_N|}{N}\right)^{-1}\right]^{\frac{1}{1 -\varepsilon}},
 \end{displaymath}
 and the main estimator $\widehat\theta_{N,\psi_{\varepsilon}(N)}$, are consistent.
\end{itemize}
\end{proposition}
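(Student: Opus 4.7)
The plan is to mimic the proof of Proposition \ref{risk_bound_BM}, with an extra layer handling the time discretization. First, substituting Equation (\ref{SDE_BM}) into each increment $X_{t_{j+1}}^{i} - X_{t_j}^{i}$ yields the algebraic identity
\begin{displaymath}
V_{N,n} = \theta_0 D_{N,n} + \theta_0 W_{N,n} + U_{N,n},
\end{displaymath}
with
\begin{displaymath}
W_{N,n} := \frac{1}{NT}\sum_{i=1}^{N}\sum_{j=0}^{n-1} b(X_{t_j}^{i})\int_{t_j}^{t_{j+1}}\bigl[b(X_{s}^{i}) - b(X_{t_j}^{i})\bigr]ds
\end{displaymath}
the drift-discretization residual and
\begin{displaymath}
U_{N,n} := \frac{1}{NT}\sum_{i=1}^{N}\sum_{j=0}^{n-1} b(X_{t_j}^{i})\int_{t_j}^{t_{j+1}}\sigma(X_{s}^{i})\delta B_{s}^{i}
\end{displaymath}
the piecewise-constant It\^o integral. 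On $\{D_{N,n}\geqslant\mathfrak d\}$ this gives $(\widehat\theta_{N,n}-\theta_0)^2\leqslant 2\mathfrak d^{-2}(\theta_0^2 W_{N,n}^2 + U_{N,n}^2)$, so, exactly as in the proof of Proposition \ref{risk_bound_BM},
\begin{displaymath}
\mathbb E[(\widehat\theta_{N,n}^{\mathfrak d}-\theta_0)^2]\leqslant \frac{2}{\mathfrak d^2}\bigl(\theta_0^2\mathbb E[W_{N,n}^2]+\mathbb E[U_{N,n}^2]\bigr) + \theta_0^2\,\mathbb P(D_{N,n}<\mathfrak d),
\end{displaymath}
reducing the analysis to bounding these three quantities.

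For $\mathbb E[U_{N,n}^2]$, It\^o's isometry and the same cross-term analysis over $(i,k)\in\mathcal R_N$ as in Proposition \ref{risk_bound_BM} yield $\mathbb E[U_{N,n}^2]\leqslant CN^{-1}(1+|\mathcal R_N|/N)$; the uniform boundedness of $s\mapsto\mathbb E[b(X_{s}^{1})^2]$ on $[0,T]$ that this requires follows from (\ref{Gaussian_bound_density_BM}) combined with the linear growth of $b$. For $\mathbb P(D_{N,n}<\mathfrak d)$, the threshold constraint $\mathfrak d\leqslant\|b\|_f^2/2$ and Markov's inequality give $\mathbb P(D_{N,n}<\mathfrak d)\leqslant \mathfrak d^{-2}\mathbb E[(D_{N,n}-\|b\|_f^2)^2]$, which is then split into a variance part $\mathbb E[(D_N-\|b\|_f^2)^2]$ already bounded by $CN^{-1}(1+|\mathcal R_N|/N)$ in Proposition \ref{risk_bound_BM}, and a discretization residual $\mathbb E[(D_{N,n}-D_N)^2]$ of the same nature as $\mathbb E[W_{N,n}^2]$ modulo the factorization $b(u)^2-b(v)^2=(b(u)-b(v))(b(u)+b(v))$.

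Both discretization errors $\mathbb E[W_{N,n}^2]$ and $\mathbb E[(D_{N,n}-D_N)^2]$ are controlled via the Lipschitz estimate $|b(x)-b(y)|\leqslant\|b'\|_\infty|x-y|$ and the standard BDG bound $\mathbb E[(X_{s}^{i}-X_{t_j}^{i})^{2p}]\leqslant C_p(s-t_j)^p$. When $b$ is bounded one factors out $\|b\|_\infty$ and obtains contributions of order $1/n$; the dependence structure plays no role here since each cross term is already of order $1/n$ via Cauchy-Schwarz before summation over $(i,k)$. When $b$ is only of linear growth, the extra unbounded factor $b(X_{t_j}^{i})$ must be absorbed via H\"older's inequality with an exponent $1+\varepsilon'$, all its polynomial moments being finite thanks to (\ref{Gaussian_bound_density_BM}); the conjugate exponent then yields only a $(s-t_j)^{1-\varepsilon}$ decay after integration, producing the $1/n^{1-\varepsilon}$ contribution. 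Assembling the three bounds yields the announced $\mathbb L^2$-risk estimate.

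Finally, the consistency claims follow from the same truncation bootstrap as in Step 2 of the proof of Proposition \ref{risk_bound_BM}: with the calibrated choice $n=\psi(N)$ (respectively $n=\psi_\varepsilon(N)$) the Monte-Carlo and discretization errors balance and their sum is $o(1)$ under $|\mathcal R_N|=o(N^2)$, while the passage from $\widehat\theta_{N,n}^{\mathfrak d}$ to $\widehat\theta_{N,n}$ relies on $\mathbb P(|\widehat\theta_{N,n}-\theta_0|>\eta)\leqslant \mathbb P(D_{N,n}<\mathfrak d)+\mathbb P(|\widehat\theta_{N,n}^{\mathfrak d}-\theta_0|>\eta)$, each summand tending to zero. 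The main obstacle will be the careful moment accounting in the unbounded-$b$ regime: one must simultaneously exploit the BDG estimate on $X$, the Lipschitz regularity of $b$, and the Gaussian tail (\ref{Gaussian_bound_density_BM}) to handle all polynomial moments of $b(X_t)$, and the need for H\"older exponents approaching $(1,\infty)$ is precisely what produces the $\varepsilon$-loss in the exponent of $n$.
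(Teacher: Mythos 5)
Your proposal is correct and follows essentially the same route as the paper: isolate a martingale part controlled by It\^o's isometry with the $\mathcal R_N$ cross-term count, a drift-discretization residual controlled by the Lipschitz property of $b$, the BDG increment bound and H\"older's inequality (whence the $\varepsilon$-loss when $b$ is unbounded), and a $\mathbb P(D_{N,n}<\mathfrak d)$ term handled via $\mathfrak d\leqslant\|b\|_f^2/2$ and Chebyshev. The only (cosmetic) difference is bookkeeping: you expand $V_{N,n}=\theta_0D_{N,n}+\theta_0W_{N,n}+U_{N,n}$ directly, whereas the paper bounds the differences $V_{N,n}-V_N$ and $D_{N,n}-D_N$ and then reuses the continuous-time bounds of Proposition \ref{risk_bound_BM}; both lead to the same estimates and the same consistency bootstrap.
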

%

% Proof.

%
\begin{proof}
The proof of Proposition \ref{risk_bound_discrete_time_BM} is dissected in three steps.
\\
\\
{\bf Step 1.} This step deals with suitable bounds on
\begin{displaymath}
\mathbb E[(V_N - V_{N,n})^2]
\quad {\rm with}\quad
V_N =\frac{1}{NT}\sum_{i = 1}^{N}\int_{0}^{T}b(X_{s}^{i})\delta X_{s}^{i}.
\end{displaymath}
First of all,
\begin{eqnarray*}
 |V_N - V_{N,n}| & = &
 \frac{1}{NT}\left|\sum_{i = 1}^{N}\sum_{j = 0}^{n - 1}
 \int_{t_j}^{t_{j + 1}}(b(X_{s}^{i}) - b(X_{t_j}^{i}))\delta X_{s}^{i}\right|\\
 & \leqslant &
 \frac{\theta_0}{NT}\left|\sum_{i = 1}^{N}\sum_{j = 0}^{n - 1}
 \int_{t_j}^{t_{j + 1}}(b(X_{s}^{i}) - b(X_{t_j}^{i}))b(X_{s}^{i})ds\right|\\
 & &
 \hspace{1cm} +
 \frac{1}{NT}\left|\sum_{i = 1}^{N}\sum_{j = 0}^{n - 1}
 \int_{t_j}^{t_{j + 1}}(b(X_{s}^{i}) - b(X_{t_j}^{i}))\sigma(X_{s}^{i})\delta B_{s}^{i}\right|
 =: B_{N,n} + A_{N,n}.
\end{eqnarray*}
On the one hand, let us control $\mathbb E(A_{N,n}^{2})$. By Cauchy-Schwarz's inequality and the isometry property of It\^o's integral,
\begin{eqnarray*}
 \mathbb E(A_{N,n}^{2}) & = &
 \frac{1}{NT^2}\mathbb E\left[\left(
 \int_{0}^{T}\left(\sum_{j = 0}^{n - 1}(b(X_{s}^{1}) - b(X_{t_j}^{1}))
 \sigma(X_{s}^{1})\mathbf 1_{(t_j,t_{j + 1})}(s)\right)\delta B_{s}^{1}
 \right)^2\right]\\
 & &
 \hspace{1cm}
 +\frac{1}{N^2T^2}\sum_{(i,k)\in\mathcal R_N}
 \mathbb E\left(\left(\int_{0}^{T}\left(\sum_{j = 0}^{n - 1}(b(X_{s}^{i}) - b(X_{t_j}^{i}))
 \sigma(X_{s}^{i})\mathbf 1_{(t_j,t_{j + 1})}(s)\right)\delta B_{s}^{i}\right)\right.\\
 & &
 \hspace{4cm}\left.\times
 \left(\int_{0}^{T}\left(\sum_{j = 0}^{n - 1}(b(X_{s}^{k}) - b(X_{t_j}^{k}))
 \sigma(X_{s}^{k})\mathbf 1_{(t_j,t_{j + 1})}(s)\right)\delta B_{s}^{k}\right)\right)\\
 & \leqslant &
 \frac{1}{NT^2}\left(1 +\frac{|\mathcal R_N|}{N}\right)
 \sum_{j = 0}^{n - 1}\int_{t_j}^{t_{j + 1}}
 \mathbb E[(b(X_{s}^{1}) - b(X_{t_j}^{1}))^2\sigma(X_{s}^{1})^2]ds.
\end{eqnarray*}
Moreover, for every $j\in\{0,\dots,n - 1\}$ and $s\in [t_j,t_{j + 1}]$, by the isometry property of It\^o's integral,
\begin{eqnarray*}
 \mathbb E[(b(X_{s}^{1}) - b(X_{t_j}^{1}))^2\sigma(X_{s}^{1})^2] & \leqslant &
 \|\sigma\|_{\infty}^{2}\|b'\|_{\infty}^{2}
 \mathbb E\left[\left(\theta_0\int_{t_j}^{s}b(X_{u}^{1})du +
 \int_{t_j}^{s}\sigma(X_{u}^{1})\delta B_{u}^{1}\right)^2\right]\\
 & \leqslant &
 2\|\sigma\|_{\infty}^{2}\|b'\|_{\infty}^{2}\left(\theta_{0}^{2}(s - t_j)\int_{t_j}^{s}
 \mathbb E(b(X_{u}^{1})^2)du\right.\\
 & &
 \hspace{4.5cm}\left. +
 \int_{t_j}^{s}\mathbb E(\sigma(X_{u}^{1})^2)du\right)\\
 & \leqslant &
 2\underbrace{\|\sigma\|_{\infty}^{2}\|b'\|_{\infty}^{2}
 (\theta_{0}^{2}T\|b\|_{f}^{2} +\|\sigma\|_{\infty}^{2})}_{=:\mathfrak c_1}(s - t_j).
\end{eqnarray*}
Then,
\begin{eqnarray*}
 \mathbb E(A_{N,n}^{2})
 & \leqslant &
 \frac{\mathfrak c_1}{NT^2}\left(1 +\frac{|\mathcal R_N|}{N}\right)
 \sum_{j = 0}^{n - 1}(t_{j + 1} - t_j)^2 =
 \frac{\mathfrak c_1}{N}\left(1 +\frac{|\mathcal R_N|}{N}\right)\frac{1}{n}.
\end{eqnarray*}
On the other hand, let us control $\mathbb E(B_{N,n}^{2})$ when $b$ may be unbounded, and then when $b$ is bounded:
\begin{itemize}
 \item For any $p,q > 0$ such that $1/p + 1/q = 1$, by H\"older's inequality,
 \begin{eqnarray*}
  \mathbb E(B_{N,n}^{2})
  & \leqslant &
  \frac{\theta_{0}^{2}}{T^2}\mathbb E\left[\left(\sum_{j = 0}^{n - 1}
  \int_{t_j}^{t_{j + 1}}(b(X_{s}^{1}) - b(X_{t_j}^{1}))b(X_{s}^{1})ds\right)^2\right]\\
  & \leqslant &
  \frac{\theta_{0}^{2}}{T}\sum_{j = 0}^{n - 1}
  \int_{t_j}^{t_{j + 1}}\mathbb E[(b(X_{s}^{1}) - b(X_{t_j}^{1}))^2b(X_{s}^{1})^2]ds\\
  & \leqslant &
  \frac{\theta_{0}^{2}\|b'\|_{\infty}^{2}}{T^{1 - 1/q}}\underbrace{\left(\frac{1}{T}\int_{0}^{T}
  \mathbb E(|b(X_{s}^{1})|^{2q})ds\right)^{\frac{1}{q}}}_{=\||b|^q\|_{f}^{2/q}}
  \sum_{j = 0}^{n - 1}\left(\int_{t_j}^{t_{j + 1}}
  \mathbb E(|X_{s}^{1} - X_{t_j}^{1}|^{2p})ds\right)^{\frac{1}{p}}.
 \end{eqnarray*}
 Moreover, by Burkholder-Davis-Gundy's inequality, there exist two positive constants $\mathfrak c_2$ and $\mathfrak c_3(p)$, not depending on $N$ and $n$, such that for every $j\in\{0,\dots,n - 1\}$ and $s\in [t_j,t_{j + 1}]$,
 \begin{eqnarray}
  \mathbb E(|X_{s}^{1} - X_{t_j}^{1}|^{2p})
  & \leqslant &
  \mathbb E\left[\left(\int_{t_j}^{s}|\theta_0b(X_{u}^{1})|du +
  \left|\int_{t_j}^{s}\sigma(X_{u}^{1})\delta B_{u}^{1}\right|\right)^{2p}\right]
  \nonumber\\
  & \leqslant &
  2^{2p - 1}|\theta_0|^{2p}(s - t_j)^{2p - 1}\int_{t_j}^{s}\mathbb E(|b(X_{u}^{1})|^{2p})du +
  2^{2p - 1}\mathbb E\left[\left|\int_{t_j}^{s}\sigma(X_{u}^{1})\delta B_{u}^{1}\right|^{2p}\right]
  \nonumber\\
  \label{risk_bound_discrete_time_BM_1}
  & \leqslant &
  2^{2p - 1}|\theta_0|^{2p}T\||b|^p\|_{f}^{2}(s - t_j)^{2p - 1} +
  2^{2p - 1}\mathfrak c_2\mathbb E\left[\left(\int_{t_j}^{s}\sigma(X_{u}^{1})^2du\right)^p\right]
  \leqslant
  \mathfrak c_3(p)(s - t_j)^p.
 \end{eqnarray}
 Then,
 \begin{eqnarray*}
  \mathbb E(B_{N,n}^{2})
  & \leqslant &
  \mathfrak c_3(p)^{\frac{1}{p}}\frac{\theta_{0}^{2}
  \|b'\|_{\infty}^{2}}{T^{1 - 1/q}}\||b|^q\|_{f}^{\frac{2}{q}}
  \sum_{j = 0}^{n - 1}\left(\int_{t_j}^{t_{j + 1}}(s - t_j)^pds\right)^{\frac{1}{p}}\\
  & = &
  \mathfrak c_3(p)^{\frac{1}{p}}
  \frac{\theta_{0}^{2}\|b'\|_{\infty}^{2}}{(p + 1)^{1/p}T^{1 - 1/q}}\||b|^q\|_{f}^{\frac{2}{q}}
  \sum_{j = 0}^{n - 1}(t_{j + 1} - t_j)^{1 +\frac{1}{p}} =
  \mathfrak c_3(p)^{\frac{1}{p}}
  \frac{\theta_{0}^{2}\|b'\|_{\infty}^{2}T}{(p + 1)^{1/p}}\||b|^q\|_{f}^{\frac{2}{q}}
  n^{-\frac{1}{p}}.
 \end{eqnarray*}
 So, for $p = 1/(1 -\varepsilon)$ and $q = 1/\varepsilon$, there exists a constant $\mathfrak c_4(\varepsilon) > 0$, not depending on $N$ and $n$, such that
 \begin{displaymath}
 \mathbb E(B_{N,n}^{2})\leqslant\frac{\mathfrak c_4(\varepsilon)}{n^{1 -\varepsilon}}.
 \end{displaymath}
 \item If $b$ is bounded, then
 \begin{eqnarray*}
  \mathbb E(B_{N,n}^{2})
  & \leqslant &
  \frac{\theta_{0}^{2}}{T}\sum_{j = 0}^{n - 1}
  \int_{t_j}^{t_{j + 1}}\mathbb E[(b(X_{s}^{1}) - b(X_{t_j}^{1}))^2b(X_{s}^{1})^2]ds\\
  & \leqslant &
  \frac{\theta_{0}^{2}\|b'\|_{\infty}^{2}\|b\|_{\infty}^{2}}{T}
  \sum_{j = 0}^{n - 1}\int_{t_j}^{t_{j + 1}}\mathbb E[(X_{s}^{1} - X_{t_j}^{1})^2]ds
 \end{eqnarray*}
 and, for every $j\in\{0,\dots,n - 1\}$ and $s\in [t_j,t_{j + 1}]$,
 \begin{eqnarray*}
  \mathbb E[(X_{s}^{1} - X_{t_j}^{1})^2]
  & \leqslant &
  2\theta_{0}^{2}\mathbb E\left[\left(\int_{t_j}^{s}b(X_{u}^{1})du\right)^2\right] +
  2\mathbb E\left(\int_{t_j}^{s}\sigma(X_{u}^{1})^2du\right)\\
  & \leqslant &
  2\underbrace{(\theta_{0}^{2}T\|b\|_{\infty}^{2} +
  \|\sigma\|_{\infty}^{2})}_{=:\mathfrak c_5}(s - t_j).
 \end{eqnarray*}
 So,
 \begin{displaymath}
 \mathbb E(B_{N,n}^{2})
 \leqslant
 \frac{\mathfrak c_5\theta_{0}^{2}\|b'\|_{\infty}^{2}\|b\|_{\infty}^{2}}{T}
 \sum_{j = 0}^{n - 1}(t_{j + 1} - t_j)^2 =
 \frac{\mathfrak c_6}{n}
 \quad {\rm with}\quad
 \mathfrak c_6 =\mathfrak c_5\theta_{0}^{2}\|b'\|_{\infty}^{2}\|b\|_{\infty}^{2}T.
 \end{displaymath}
\end{itemize}
Therefore,
\begin{displaymath}
\mathbb E[(V_{N,n} - V_N)^2]
\leqslant
2\left(\frac{\mathfrak c_1}{N}\left(1 +\frac{|\mathcal R_N|}{N}\right) +
\mathfrak c_4(\varepsilon)\right)\frac{1}{n^{1 -\varepsilon}}
\end{displaymath}
when $b$ may be unbounded, and
\begin{displaymath}
\mathbb E[(V_{N,n} - V_N)^2]
\leqslant
2\left(\frac{\mathfrak c_1}{N}\left(1 +\frac{|\mathcal R_N|}{N}\right) +
\mathfrak c_6\right)\frac{1}{n}
\end{displaymath}
when $b$ is bounded.
\\
\\
{\bf Step 2.} This step deals with suitable bounds on $\mathbb E[(D_N - D_{N,n})^2]$. First of all,
\begin{eqnarray*}
 |D_N - D_{N,n}| & = &
 \frac{1}{NT}\left|\sum_{i = 1}^{N}\sum_{j = 0}^{n - 1}
 \int_{t_j}^{t_{j + 1}}(b(X_{s}^{i})^2 - b(X_{t_j}^{i})^2)ds\right|\\
 & \leqslant &
 \frac{\|b'\|_{\infty}}{NT}\sum_{i = 1}^{N}\sum_{j = 0}^{n - 1}
 \int_{t_j}^{t_{j + 1}}(|b(X_{s}^{i})| + |b(X_{t_j}^{i})|)|X_{s}^{i} - X_{t_j}^{i}|ds.
\end{eqnarray*}
On the one hand, if $b$ is bounded, then
\begin{eqnarray*}
 \mathbb E[(D_N - D_{N,n})^2]
 & \leqslant &
 \frac{4\|b'\|_{\infty}^{2}\|b\|_{\infty}^{2}}{T}\sum_{j = 0}^{n - 1}
 \int_{t_j}^{t_{j + 1}}\mathbb E[(X_{s}^{1} - X_{t_j}^{1})^2]ds\\
 & \leqslant &
 \frac{4\mathfrak c_5\|b'\|_{\infty}^{2}\|b\|_{\infty}^{2}}{T}
 \sum_{j = 0}^{n - 1}(t_{j + 1} - t_j)^2 =
 \frac{4\mathfrak c_6}{n}.
\end{eqnarray*}
On the other hand, assume that $b$ may be unbounded. For every $q\in [1,\infty)$ and $s\in [0,T]$, by Inequality (\ref{risk_bound_discrete_time_BM_1}),
\begin{eqnarray*}
 \mathbb E(|b(X_{s}^{1})|^{2q})
 & \leqslant &
 2^{2q - 1}\|b'\|_{\infty}^{2q}\mathbb E(|X_{s}^{1} - x_0|^{2q}) + 2^{2q - 1}b(x_0)^{2q}\\
 & \leqslant &
 2^{2q - 1}(\|b'\|_{\infty}^{2q}\mathfrak c_3(q)T^q + b(x_0)^{2q}) =:\mathfrak c_7(q).
\end{eqnarray*}
Then, for any $p,q > 0$ such that $1/p + 1/q = 1$, by H\"older's inequality,
\begin{eqnarray*}
 \mathbb E[(D_N - D_{N,n})^2]
 & \leqslant &
 \frac{\|b'\|_{\infty}^{2}}{T}\sum_{j = 0}^{n - 1}
 \int_{t_j}^{t_{j + 1}}
 \mathbb E[(|b(X_{s}^{1})| + |b(X_{t_j}^{1})|)^{2q}]^{\frac{1}{q}}
 \mathbb E(|X_{s}^{1} - X_{t_j}^{1}|^{2p})^{\frac{1}{p}}ds\\
 & \leqslant &
 \frac{\|b'\|_{\infty}^{2}}{T}
 \cdot 4\mathfrak c_7(q)^{\frac{1}{q}}\cdot
 \sum_{j = 0}^{n - 1}\int_{t_j}^{t_{j + 1}}
 \mathbb E(|X_{s}^{1} - X_{t_j}^{1}|^{2p})^{\frac{1}{p}}ds.
\end{eqnarray*}
For $p = 1/(1 -\varepsilon)$ and $q = 1/\varepsilon$, by Inequality (\ref{risk_bound_discrete_time_BM_1}), there exists a constant $\mathfrak c_8(\varepsilon) > 0$, not depending on $N$ and $n$, such that
\begin{displaymath}
\mathbb E[(D_N - D_{N,n})^2]
\leqslant\frac{\mathfrak c_8(\varepsilon)}{n^{1 -\varepsilon}}.
\end{displaymath}
{\bf Step 3 (conclusion).} Assume that $b$ is bounded. By Proposition \ref{risk_bound_BM} together with the two previous steps, there exists a constant $\mathfrak c_9 > 0$, not depending on $N$ and $n$, such that
\begin{eqnarray*}
 \mathbb E[(V_{N,n} -\theta_0\|b\|_{f}^{2})^2]
 & \leqslant &
 2\mathbb E[(V_{N,n} - V_N)^2] +
 2\mathbb E[(U_N +\theta_0(D_N -\|b\|_{f}^{2}))^2]\\
 & \leqslant &
 \mathfrak c_9\left(
 \frac{1}{N}\left(1 +\frac{|\mathcal R_N|}{N}\right) +\frac{1}{n}\right)
\end{eqnarray*}
and
\begin{eqnarray*}
 \mathbb E[(D_{N,n} -\|b\|_{f}^{2})^2]
 & \leqslant &
 2\mathbb E[(D_{N,n} - D_N)^2] +
 2\mathbb E[(D_N -\|b\|_{f}^{2})^2]\\
 & \leqslant &
 \mathfrak c_9\left(
 \frac{1}{N}\left(1 +\frac{|\mathcal R_N|}{N}\right) +\frac{1}{n}\right).
\end{eqnarray*}
Therefore, there exists a constant $\mathfrak c_{10} > 0$, not depending on $N$ and $n$, such that
\begin{eqnarray*}
 \mathbb E[(\widehat\theta_{N,n}^{\mathfrak d} -\theta_0)^2] & = &
 \mathbb E\left[\left(\frac{V_{N,n}}{D_{N,n}}\mathbf 1_{D_{N,n}\geqslant\mathfrak d}
 -\theta_0\right)^2\right]\\
 & = &
 \mathbb E\left[\left(\frac{V_{N,n} -\theta_0D_{N,n}}{D_{N,n}}\right)^2
 \mathbf 1_{D_{N,n}\geqslant\mathfrak d}\right] +\theta_{0}^{2}
 \mathbb P(D_{N,n} <\mathfrak d)\\
 & \leqslant &
 \frac{1}{\mathfrak d^2}\mathbb E[(V_{N,n} -\theta_0\|b\|_{f}^{2}
 -\theta_0(D_{N,n} -\|b\|_{f}^{2}))^2]\\
 & &
 \hspace{4cm} +
 \theta_{0}^{2}\mathbb P\left(|D_{N,n} -\|b\|_{f}^{2}| >\frac{\|b\|_{f}^{2}}{2}\right)
 \quad\textrm{because $\mathfrak d\in\Delta_f$}\\
 & \leqslant &
 \frac{2}{\mathfrak d^2}[\mathbb E[(V_{N,n} -\theta_0\|b\|_{f}^{2})^2] +
 2\theta_{0}^{2}\mathbb E[(D_{N,n} -\|b\|_{f}^{2})^2]]
 \leqslant
 \frac{\mathfrak c_{10}}{\mathfrak d^2}\left(
 \frac{1}{N}\left(1 +\frac{|\mathcal R_N|}{N}\right) +\frac{1}{n}\right).
\end{eqnarray*}
The same way, if $b$ may be unbounded, then there exists a constant $\mathfrak c_{11}(\varepsilon) > 0$, not depending on $N$ and $n$, such that
\begin{displaymath}
\mathbb E[(\widehat\theta_{N,n}^{\mathfrak d} -\theta_0)^2]
\leqslant
\frac{\mathfrak c_{11}(\varepsilon)}{\mathfrak d^2}\left(
\frac{1}{N}\left(1 +\frac{|\mathcal R_N|}{N}\right) +\frac{1}{n^{1 -\varepsilon}}\right).
\end{displaymath}
Finally, assume that $|\mathcal R_N| = o(N^2)$. By following the same line than in the second step of the proof of Proposition \ref{risk_bound_BM}, thanks to the bounds on
\begin{displaymath}
\mathbb E[(D_{N,n} -\|b\|_{f}^{2})^2]
\quad {\rm and}\quad
\mathbb E[(\widehat\theta_{N,n}^{\mathfrak d} -\theta_0)^2]
\end{displaymath}
previously established, $\widehat\theta_{N,\psi(N)}^{\mathfrak d}$ and $\widehat\theta_{N,\psi(N)}$ (resp. $\widehat\theta_{N,\psi_{\varepsilon}(N)}^{\mathfrak d}$ and $\widehat\theta_{N,\psi_{\varepsilon}(N)}$) are consistent when $b$ is bounded (resp. $b$ may be unbounded).
\end{proof}
%

% Section : Case H > 1/2.

%
\section{Case $H > 1/2$}\label{section_fBm}
Subsection \ref{section_Skorokhod_integral} provides basics on Skorokhod's integral with respect to the fractional Brownian motion for $H\in (0,1)$, also useful in Section \ref{section_fBm_rough}, and then some more sophisticated results requiring $H\in (1/2,1)$. Subsection \ref{section_risk_bound_fBm} deals with a risk bound on a truncated version $\widehat\theta_{N}^{\mathfrak d}$ of the auxiliary estimator $\widehat\theta_N$, and Subsection \ref{section_computable_estimator} deals with the existence, the uniqueness and some convergence results on the computable estimator $\widetilde\theta_N$. Note that our estimation strategy in Subsection \ref{section_computable_estimator} is deeply related to the expression of the Malliavin derivative of $X_{t}^{1},\dots,X_{t}^{N}$, $t\in [0,T]$, when $\sigma$ is constant. This is why the case of a non-constant volatility function is not investigated when $H\neq 1/2$.
%

% Subsection : Basics on the Skorokhod integral.

%
\subsection{Basics on the Skorokhod integral}\label{section_Skorokhod_integral}
Assume first that $H\in (0,1)$, and consider a $N$-dimensional fractional Brownian motion $(B^1,\dots,B^N)$ defined on a complete probability space $(\Omega,\mathcal F,\mathbb P)$, where $\mathcal F$ is the $\sigma$-algebra generated by $(B^1,\dots,B^N)$.
\\
\\
First, let us define the Skorokhod integral with respect to $B^i$ for any $i\in\{1,\dots,N\}$. Let $R$ be the covariance function of the fractional Brownian motion, which is defined by
\begin{displaymath}
R(s,t) :=\frac{1}{2}(s^{2H} + t^{2H} - |t - s|^{2H})
\textrm{ $;$ }\forall s,t\in [0,T],
\end{displaymath}
let $\mathcal H$ be the reproducing kernel Hilbert space of the fractional Brownian motion, that is the completion of the space of the step functions from $[0,T]$ into $\mathbb R$ with respect to the inner product $\langle .,.\rangle_{\mathcal H}$ defined by
\begin{displaymath}
\langle\mathbf 1_{[0,s]},
\mathbf 1_{[0,t]}\rangle_{\mathcal H} := R(s,t)
\textrm{ $;$ }\forall s,t\in [0,T],
\end{displaymath}
and let $\mathbf B^i$ be the (linear) isometry from $\mathcal H$ into $\mathbb L^2(\Omega)$ defined by
\begin{displaymath}
\mathbf B^i(h) :=
\int_{0}^{T}h(s)dB_{s}^{i}
\textrm{ $;$ }\forall h\in\mathcal H.
\end{displaymath}
Note that when $H\in (1/2,1)$, $\langle .,.\rangle_{\mathcal H}$ satisfies
\begin{displaymath}
\langle h,\eta\rangle_{\mathcal H} =
\alpha_H
\int_{0}^{T}\int_{0}^{T}
h(s)\eta(t)|t - s|^{2H - 2}dsdt
\textrm{ $;$ }\forall h,\eta\in\mathcal H.
\end{displaymath}
The Malliavin derivative of a smooth functional
\begin{displaymath}
F =\varphi(
\mathbf B^i(h_1),\dots,
\mathbf B^i(h_n))
\end{displaymath}
with $n\in\mathbb N^*$, $\varphi\in C_{\rm p}^{\infty}(\mathbb R^n;\mathbb R)$ (the space of all the smooth functions $\varphi :\mathbb R^n\rightarrow\mathbb R$ such that $\varphi$ and all its partial derivatives have polynomial growth) and $h_1,\dots,h_n\in\mathcal H$, is the $\mathcal H$-valued random variable
\begin{displaymath}
\mathbf D^iF :=
\sum_{j = 1}^{n}
\partial_j\varphi(
\mathbf B^i(h_1),\dots,
\mathbf B^i(h_n))h_j.
\end{displaymath}
By Nualart \cite{NUALART06}, Proposition 1.2.1, the map $\mathbf D^i$ is closable from $\mathbb L^2(\Omega;\mathbb R)$ into $\mathbb L^2(\Omega;\mathcal H)$. Its domain in $\mathbb L^2(\Omega;\mathbb R)$, denoted by $\mathbb D_{i}^{1,2}$, is the closure of the smooth functionals space for the norm $\|.\|_{1,2}$ defined by
\begin{displaymath}
\|F\|_{1,2}^{2} :=
\mathbb E(F^2) +
\mathbb E(\|\mathbf D^1F\|_{\mathcal H}^{2}).
\end{displaymath}
The adjoint $\delta^i$ of the Malliavin derivative $\mathbf D^i$ is called the divergence operator, its domain is denoted by ${\rm dom}(\delta^i)$, and for any process $Y = (Y_s)_{s\in [0,T]}$ and any $t\in (0,T]$ such that $Y\mathbf 1_{[0,t]}\in\textrm{dom}(\delta^i)$, the Skorokhod integral of $Y$ with respect to $B^i$ is defined on $[0,t]$ by
\begin{displaymath}
\int_{0}^{t}Y_s\delta B_{s}^{i} :=
\delta^i(Y\mathbf 1_{[0,t]}).
\end{displaymath}
Note that since $\delta^i$ is the adjoint of the Malliavin derivative $\mathbf D^i$, the Skorokhod integral of $Y$ with respect to $B^i$ on $[0,t]$ is a centered random variable:
\begin{equation}\label{zero_mean_Skorokhod_integral}
\mathbb E\left(\int_{0}^{t}Y_s\delta B_{s}^{i}\right) =
\mathbb E(1\cdot\delta^i(Y\mathbf 1_{[0,t]})) =
\mathbb E(\langle\mathbf D^i(1),Y\mathbf 1_{[0,t]}\rangle_{\mathcal H}) = 0.
\end{equation}
Let $\mathcal S^i$ be the space of the smooth functionals presented above and consider $\mathbb D_{i}^{1,2}(\mathcal H)$, the closure of
\begin{displaymath}
\mathcal S_{\mathcal H}^{i} :=
\left\{
\sum_{j = 1}^{n}F_jh_j
\textrm{ $;$ }
h_1,\dots,h_n\in\mathcal H
\textrm{, }
F_1,\dots,F_n\in\mathcal S^i
\right\}
\end{displaymath}
for the norm $\|.\|_{1,2,\mathcal H}$ defined by
\begin{displaymath}
\|Y\|_{1,2,\mathcal H}^{2} :=\mathbb E(\|Y\|_{\mathcal H}^{2}) +
\mathbb E(\|\mathbf D^1Y\|_{\mathcal H\otimes\mathcal H}^{2}).
\end{displaymath}
By Nualart \cite{NUALART06}, Proposition 1.3.1, $\mathbb D_{i}^{1,2}(\mathcal H)\subset {\rm dom}(\delta^i)$. Note that when $H\in (1/2,1)$, the divergence operator $\delta^i$ satisfies the following isometry type property on $\mathbb D_{i}^{1,2}(\mathcal H)$: for every $Y,Z\in\mathbb D_{i}^{1,2}(\mathcal H)$,
\begin{eqnarray}
 \label{isometry_divergence}
 \mathbb E(\delta^i(Y)\delta^i(Z)) & = &
 \alpha_H\int_{0}^{T}\int_{0}^{T}\mathbb E(Y_sZ_t)|t - s|^{2H - 2}dsdt\\
 & &
 \hspace{1.5cm} +
 \alpha_{H}^{2}\int_{[0,T]^4}\mathbb E(\mathbf D_{u'}^{i}Y_v
 \mathbf D_{v'}^{i}Z_u)|u - u'|^{2H - 2}|v - v'|^{2H - 2}du'dv'dudv.
 \nonumber
\end{eqnarray}
From now on and in the two following subsections, assume that $H\in (1/2,1)$, and consider
\begin{equation}\label{SDE_fBm}
X_{t}^{i} = x_0 +\theta_0\int_{0}^{t}b(X_{s}^{i})ds +\sigma B_{t}^{i}
\textrm{ $;$ }t\in [0,T]\textrm{, }i\in\{1,\dots,N\},
\end{equation}
where $b\in C^1(\mathbb R)$, $b'$ is bounded and $\sigma\neq 0$. Under these conditions, Equation (\ref{SDE_fBm}) has a unique (pathwise) solution $(X_{t}^{1},\dots,X_{t}^{N})_{t\in [0,T]}$. For any $i\in\{1,\dots,N\}$ and any $\alpha\in (0,H)$, the paths of $B^i$ are $\alpha$-H\"older continuous (see Nualart \cite{NUALART06}, Section 5.1), and so are those of $X^i$ by Equation (\ref{SDE_fBm}). Then, for every process $Y = (Y_t)_{t\in [0,T]}$ having $\beta$-H\"older continuous paths from $[0,T]$ into $\mathbb R$ with $\beta\in (0,1]$ such that $\alpha +\beta > 1$, the pathwise integral (in the sense of Young) of $Y$ with respect to $B^i$ (resp. $X^i$) on $[0,T]$ is well-defined (see Friz \& Victoir \cite{FV10}, Theorem 6.8):
\begin{eqnarray*}
 & &
 \int_{0}^{T}Y_sdB_{s}^{i} :=
\lim_{\pi(D)\rightarrow 0}\sum_{[u,v]\in D}Y_u(B_{v}^{i} - B_{u}^{i})\\
& &
\hspace{1cm}(\textrm{resp. }
\int_{0}^{T}Y_sdX_{s}^{i} :=
\lim_{\pi(D)\rightarrow 0}\sum_{[u,v]\in D}Y_u(X_{v}^{i} - X_{u}^{i}))
\textrm{ for any dissection $D$, of mesh $\pi(D)$, of $[0,T]$.}
\end{eqnarray*}
The two following propositions are crucial in order to establish a suitable risk bound on $\widehat\theta_N$ (see Subsection \ref{section_risk_bound_fBm}) and to compare $\widetilde\theta_N$ and $\widehat\theta_N$ (see Subsection \ref{section_computable_estimator}).
%

% Proposition : Relationship between Skorokhod's integral and the pathwise stochastic integral.

%
\begin{proposition}\label{Skorokhod_Young_relationship}
For every $\varphi\in C^1(\mathbb R)$ of bounded derivative, $(\varphi(X_{t}^{i}))_{t\in [0,T]}$ belongs to $\mathbb D_{i}^{1,2}(\mathcal H)$ and
\begin{eqnarray*}
 \int_{0}^{T}\varphi(X_{s}^{i})\delta X_{s}^{i}
 & := &
 \theta_0\int_{0}^{t}\varphi(X_{s}^{i})b(X_{s}^{i})ds +
 \sigma\int_{0}^{t}\varphi(X_{s}^{i})\delta B_{s}^{i}\\
 & = &
 \int_{0}^{T}\varphi(X_{s}^{i})dX_{s}^{i} -
 \alpha_H\sigma
 \int_{0}^{T}\int_{0}^{T}\mathbf D_{s}^{i}[\varphi(X_{t}^{i})]\cdot |t - s|^{2H - 2}dsdt.
\end{eqnarray*}
\end{proposition}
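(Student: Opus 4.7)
The plan is to split the proposition into two parts: first, to show that $(\varphi(X_t^i))_{t\in [0,T]}$ belongs to $\mathbb{D}_i^{1,2}(\mathcal{H})$ so that the right-hand side of the defining equation for $\int_0^T\varphi(X_s^i)\delta X_s^i$ makes sense; and second, to apply a classical transfer formula relating Skorokhod integrals with respect to $B^i$ to pathwise (Young) integrals when $H>1/2$.

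For the first step, I would differentiate Equation (\ref{SDE_fBm}) in the Malliavin sense (which is legitimate because $b'$ is bounded, following the line of Nualart \cite{NUALART06}, Section 2.2, or Nualart--Saussereau), solve the resulting linear equation, and obtain
\begin{displaymath}
\mathbf{D}_s^i X_t^i =\sigma\exp\!\left(\theta_0\int_s^t b'(X_u^i)\,du\right)\mathbf{1}_{[0,t]}(s),
\end{displaymath}
which is uniformly bounded on $[0,T]^2$ since $\|b'\|_\infty<\infty$. The chain rule then yields $\mathbf{D}_s^i\varphi(X_t^i)=\varphi'(X_t^i)\mathbf{D}_s^i X_t^i$, which is also uniformly bounded. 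Using the integral representation of $\langle\cdot,\cdot\rangle_{\mathcal H}$ for $H\in(1/2,1)$ recalled in Subsection \ref{section_Skorokhod_integral}, a direct computation bounds both $\mathbb E(\|\varphi(X^i)\|_{\mathcal H}^2)$ and $\mathbb E(\|\mathbf D^i\varphi(X^i)\|_{\mathcal H\otimes\mathcal H}^2)$ by the double integral of $|t-s|^{2H-2}$, hence finite. This places $\varphi(X^i)$ in $\mathbb{D}_i^{1,2}(\mathcal H)\subset {\rm dom}(\delta^i)$, and the decomposition of $\int_0^T\varphi(X_s^i)\delta X_s^i$ into a Lebesgue part and a Skorokhod part then follows from the linearity of $\delta^i$ applied to the SDE.

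For the second step, I would decompose the Young integral of $\varphi(X^i)$ against $X^i$, which is well-defined since both $\varphi(X^i)$ and $X^i$ are $\alpha$-H\"older for every $\alpha\in(0,H)$ and $2\alpha>1$, via the SDE:
\begin{displaymath}
\int_0^T\varphi(X_s^i)\,dX_s^i=\theta_0\int_0^T\varphi(X_s^i)b(X_s^i)\,ds+\sigma\int_0^T\varphi(X_s^i)\,dB_s^i.
\end{displaymath}
I would then invoke the classical transfer formula for $H>1/2$ (see e.g. Nualart \cite{NUALART06}, Proposition 5.2.3, or Al\`os--Nualart),
\begin{displaymath}
\int_0^T Y_s\,dB_s^i=\int_0^T Y_s\,\delta B_s^i+\alpha_H\int_0^T\int_0^T\mathbf{D}_s^i Y_t\,|t-s|^{2H-2}\,ds\,dt,
\end{displaymath}
valid for $Y\in\mathbb{D}_i^{1,2}(\mathcal H)$ with suitably H\"older paths, and apply it to $Y_t=\varphi(X_t^i)$. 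Subtracting the two decompositions and multiplying the resulting identity by $\sigma$ yields the announced formula.

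The main obstacle is the regularity-class matching in the first step: one has only $b\in C^1$ with bounded derivative, whereas some classical Malliavin differentiability results for SDE flows are written under $C^2$ assumptions. The workaround is to rely on the variation-of-constants representation (the linear equation for $\mathbf D^i X^i$ can be solved explicitly because the SDE is driven by an additive fractional noise), which requires only boundedness of $b'$. Once this explicit formula is secured, the finiteness of $\|\varphi(X^i)\|_{1,2,\mathcal H}$ and the transfer step reduce to elementary applications of the results recalled in Subsection \ref{section_Skorokhod_integral}.
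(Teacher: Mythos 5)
Your argument is correct and follows essentially the same route as the paper, whose entire proof is the single remark that the result is a straightforward consequence of Nualart \cite{NUALART06}, Proposition 5.2.3 --- exactly the transfer formula you invoke in your second step. Your first step (the explicit expression $\mathbf D_{s}^{i}X_{t}^{i}=\sigma\mathbf 1_{[0,t]}(s)\exp(\theta_0\int_{s}^{t}b'(X_{u}^{i})du)$ obtained by variation of constants, followed by the chain rule) merely makes explicit the membership of $\varphi(X^i)$ in $\mathbb D_{i}^{1,2}(\mathcal H)$ that the paper leaves implicit, and that same formula is stated and used by the paper in Subsection \ref{section_computable_estimator}.
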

\noindent
Proposition \ref{Skorokhod_Young_relationship} is a straightforward consequence of Nualart \cite{NUALART06}, Proposition 5.2.3. Now, consider
\begin{displaymath}
M :=\sup_{x\in\mathbb R}b'(x).
\end{displaymath}
%

% Proposition : Bound on the Skorokhod's integral variance.

%
\begin{proposition}\label{bound_variance_Skorokhod}
There exists a constant $\mathfrak c_{\ref{bound_variance_Skorokhod}} > 0$, only depending on $H$ and $\sigma$, such that for every $\varphi\in C^1(\mathbb R)$ of bounded derivative,
\begin{eqnarray*}
 \mathbb E\left[\left(\int_{0}^{T}\varphi(X_{s}^{i})\delta B_{s}^{i}\right)^2\right]
 & \leqslant &
 \mathfrak c_{\ref{bound_variance_Skorokhod}}\overline{\mathfrak m}_{H,M,T}
 \left[\left(\int_{0}^{T}\mathbb E(|\varphi(X_{s}^{i})|^{\frac{1}{H}})ds\right)^{2H} +
 \left(\int_{0}^{T}\mathbb E(\varphi'(X_{s}^{i})^2)^{\frac{1}{2H}}ds\right)^{2H}
 \right]
\end{eqnarray*}
with $\overline{\mathfrak m}_{H,M,T} = 1\vee\mathfrak m_{H,M,T}$ and
\begin{displaymath}
\mathfrak m_{H,M,T} =
\left(-\frac{H}{M}\right)^{2H}\mathbf 1_{M < 0} +
T^{2H}\mathbf 1_{M = 0} +
\left(\frac{H}{M}\right)^{2H}e^{2MT}\mathbf 1_{M > 0}.
\end{displaymath}
\end{proposition}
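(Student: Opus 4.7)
The plan is to apply the isometry identity (\ref{isometry_divergence}) to $Y = Z = (\varphi(X_{s}^{i}))_{s\in[0,T]}$; Proposition \ref{Skorokhod_Young_relationship} ensures that this process belongs to $\mathbb{D}_{i}^{1,2}(\mathcal{H})$, so the identity yields a decomposition
\begin{displaymath}
\mathbb{E}\left[\left(\int_{0}^{T}\varphi(X_{s}^{i})\delta B_{s}^{i}\right)^{2}\right] = I_{1} + I_{2},
\end{displaymath}
where $I_{1} :=\alpha_{H}\int_{0}^{T}\!\int_{0}^{T}\mathbb{E}[\varphi(X_{s}^{i})\varphi(X_{t}^{i})]|t - s|^{2H - 2}dsdt$ and $I_{2}$ is the four-fold integral involving the Malliavin derivatives. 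The two terms are then bounded separately.

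For $I_{1}$, which equals $\alpha_{H}\mathbb{E}[\|\varphi(X^{i})\|_{\mathcal{H}}^{2}]$, I would apply the Hardy--Littlewood--Sobolev inequality pointwise in $\omega$ with the critical exponent $p = 1/H$ (legitimate because $H\in (1/2,1)$ makes $\lambda = 2 - 2H\in (0,1)$ and the scaling condition $2/p +\lambda = 2$ is satisfied), then Minkowski's integral inequality---valid since $2H\geqslant 1$---to exchange the $L^{2H}(\Omega)$ norm and the $L^{1/H}([0,T])$ integral. This produces the first summand of the target bound, multiplied by the prefactor $\overline{\mathfrak m}_{H,M,T}\geqslant 1$.

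For $I_{2}$, the key ingredient is the explicit Malliavin computation. Differentiating Equation (\ref{SDE_fBm}) with respect to $B^{i}$ shows that, for $u\leqslant v$,
\begin{displaymath}
\mathbf{D}_{u}^{i}X_{v}^{i} =\sigma\exp\left(\theta_{0}\int_{u}^{v}b'(X_{r}^{i})dr\right),
\end{displaymath}
while $\mathbf{D}_{u}^{i}X_{v}^{i} = 0$ for $u > v$; consequently $\mathbf{D}_{u}^{i}\varphi(X_{v}^{i}) =\varphi'(X_{v}^{i})\mathbf{D}_{u}^{i}X_{v}^{i}$. Using $b'\leqslant M$ the exponential is majorized by a deterministic factor of the form $e^{M(v - u)_{+}}$ (up to an absorbable sign depending on $\theta_{0}$). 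Substituting into $I_{2}$, bounding absolute values and performing the $du'$ and $dv'$ integrations against the two Riesz kernels reduces the estimate to a double integral in $(u,v)$ whose scalar weight, after optimization in the three sign regimes of $M$, equals $\left(\int_{0}^{T}e^{Mt/H}dt\right)^{2H} =\mathfrak m_{H,M,T}$, i.e.\ $(-H/M)^{2H}$, $T^{2H}$ and $(H/M)^{2H}e^{2MT}$ respectively. A final HLS step in $(u,v)$ combined with Minkowski then bounds the remaining integral by $\left(\int_{0}^{T}\mathbb{E}[\varphi'(X_{s}^{i})^{2}]^{1/(2H)}ds\right)^{2H}$, which delivers the second summand.

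The main obstacle, in my view, is the bookkeeping in $I_{2}$: one must arrange the two Riesz kernels $|u - u'|^{2H - 2}, |v - v'|^{2H - 2}$ together with the exponential factor coming from the Malliavin derivative so that the $u'$ and $v'$ integrations factor out the scalar $\mathfrak m_{H,M,T}$ cleanly, while the remaining $(u,v)$-integral stays amenable to HLS on the $L^{1/H}$ scale. The piecewise expression of $\mathfrak m_{H,M,T}$ in the statement corresponds exactly to evaluating $\int_{0}^{T}e^{Mt/H}dt$ in the three sign regimes of $M$; once that organization is in place, the remaining steps only reuse the inequalities already deployed on $I_{1}$, and the constant $\mathfrak c_{\ref{bound_variance_Skorokhod}}$ collects the HLS and Minkowski constants together with $\alpha_{H}$ and $\sigma^{2}$.
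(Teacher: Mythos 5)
The paper does not actually prove this proposition: it only cites the proofs of Hu--Nualart--Zhou \cite{HNZ19}, Proposition 4.4.(2) and Comte--Marie \cite{CM21}, Theorem 2.9. Your treatment of the four-fold term $I_2$ is essentially the computation carried out there: the explicit formula for $\mathbf D_u^iX_v^i$, the extraction of the scalar $\mathfrak m_{H,M,T}$ as an upper bound for $\bigl(\int_0^Te^{Mt/H}dt\bigr)^{2H}$ by applying Hardy--Littlewood--Sobolev in the primed variables, and a final Cauchy--Schwarz plus HLS step in $(u,v)$ all go through (modulo the fact that the exponential actually involves $\theta_0\int_u^vb'$, so the relevant supremum is that of $\theta_0b'$ rather than of $b'$, a point you flag and which is already a little loose in the paper's own statement). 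The genuine gap is in $I_1$. Starting from the isometry (\ref{isometry_divergence}) you must bound $I_1=\alpha_H\int_0^T\int_0^T\mathbb E[\varphi(X_s^i)\varphi(X_t^i)]|t-s|^{2H-2}\,ds\,dt$, which involves true second moments. Your chain (HLS pathwise, then Minkowski's integral inequality with exponent $2H$) yields
\begin{displaymath}
I_1\leqslant C\left(\int_0^T\mathbb E[\varphi(X_s^i)^2]^{\frac{1}{2H}}ds\right)^{2H},
\end{displaymath}
i.e.\ the $L^{1/H}_tL^2_\omega$ norm, \emph{not} the stated $\bigl(\int_0^T\mathbb E[|\varphi(X_s^i)|^{1/H}]ds\bigr)^{2H}$. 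Since $1/H<2$, Jensen gives $\mathbb E[|Y|^{1/H}]\leqslant\mathbb E[Y^2]^{1/(2H)}$, so what your argument produces is \emph{larger} than the stated first summand: you have proved a strictly weaker inequality. Moreover, no rearrangement of $I_1$ alone can deliver the stated term, because the $L^{1/H}(\Omega)$-moments of $\varphi(X_s^i)$ do not control the covariances $\mathbb E[\varphi(X_s^i)\varphi(X_t^i)]$ (think of $u_s=\xi h(s)$ with $\xi$ centered and non-degenerate: $I_1$ is of order $\mathbb E[\xi^2]$, while $\|\xi\|_{L^{1/H}(\Omega)}^2<\mathbb E[\xi^2]$).

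The missing ingredient -- and the one used in the cited proofs -- is to bypass the exact isometry and use instead the $L^p$ Meyer-type estimate for the divergence (Nualart \cite{NUALART06}, Proposition 1.5.8, with $p=2$): $\|\delta^i(u)\|_{L^2(\Omega)}\leqslant c\,(\|\mathbb E[u]\|_{\mathcal H}+\|\mathbf D^iu\|_{L^2(\Omega;\mathcal H\otimes\mathcal H)})$. Its first term involves only the deterministic mean function $s\mapsto\mathbb E[\varphi(X_s^i)]$; HLS then gives $\|\mathbb E[u]\|_{\mathcal H}^2\leqslant b_H\bigl(\int_0^T|\mathbb E[\varphi(X_s^i)]|^{1/H}ds\bigr)^{2H}$, and Jensen with the convex map $x\mapsto x^{1/H}$ yields $|\mathbb E[\varphi(X_s^i)]|^{1/H}\leqslant\mathbb E[|\varphi(X_s^i)|^{1/H}]$, which is exactly the stated first summand. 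The second term of Meyer's inequality is then handled by your $I_2$ computation. The cancellation between $I_1$ and $I_2$ that makes this possible is not visible term by term in the isometry (it comes from the Wiener chaos decomposition), which is why the decomposition you chose cannot be repaired locally.
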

\noindent
See Hu et al. \cite{HNZ19}, Proposition 4.4.(2) (proof) and Comte \& Marie \cite{CM21}, Theorem 2.9 for a proof.
%

% Subsection : Risk bound on the auxiliary estimator \widehat\theta_{N}^{\mathfrak d}.

%
\subsection{Risk bound on the auxiliary estimator $\widehat\theta_{N}^{\mathfrak d}$}\label{section_risk_bound_fBm}
As for $H = 1/2$, let us consider
\begin{eqnarray*}
 \mathcal R_N & := &
 \{(i,k)\in\{1,\dots,N\}^2 : i\neq k
 \textrm{ and $X^i$ is not independent of $X^k$}\}\\
 & = &
 \{(i,k)\in\{1,\dots,N\}^2 : i\neq k
 \textrm{ and $B^i$ is not independent of $B^k$}\}\\
 & = &
 \{(i,k)\in\{1,\dots,N\}^2 : i\neq k\textrm{ and }R_{i,k}\neq 0\}.
\end{eqnarray*}
Since $\sigma\neq 0$, and since $b'$ is bounded, for every $t\in (0,T]$, the common probability distribution of $X_{t}^{1},\dots,X_{t}^{N}$ has a density $f_t$ with respect to Lebesgue's measure such that, for every $x\in\mathbb R$,
\begin{equation}\label{Gaussian_bound_density_fBm}
f_t(x)\leqslant
\mathfrak c_Ht^{-H}
\exp\left[-\mathfrak m_H\frac{(x - x_0)^2}{t^{2H}}\right]
\end{equation}
where $\mathfrak c_H$ and $\mathfrak m_H$ are positive constants depending on $T$ but not on $t$ and $x$ (see Li et al. \cite{LPS23}, Theorem 1.3). This bound generalizes (\ref{Gaussian_bound_density_BM}) to fractional SDEs. Then, $t\mapsto f_t(x)$ belongs to $\mathbb L^1([0,T])$, which legitimates to consider the density function $f$ defined by
\begin{displaymath}
f(x) :=\frac{1}{T}\int_{0}^{T}f_s(x)ds\textrm{ $;$ }
\forall x\in\mathbb R.
\end{displaymath}
Moreover, since $b'$ is bounded (and then $b$ has linear growth), still by Inequality (\ref{Gaussian_bound_density_fBm}),
\begin{displaymath}
|b|^{\alpha}\in\mathbb L^2(\mathbb R,f(x)dx)
\textrm{ $;$ }
\forall\alpha\in\mathbb R_+.
\end{displaymath}
\noindent
As in Section \ref{section_BM}, the usual norm on $\mathbb L^2(\mathbb R,f(x)dx)$ is denoted by $\|.\|_f$. First, note that if $\mathcal R_N =\emptyset$, then by the (usual) law of large numbers and Equality (\ref{zero_mean_Skorokhod_integral}),
\begin{eqnarray}
 \label{convergence_fBm}
 \widehat\theta_N & = &
 \theta_0 +
 \left(\sum_{i = 1}^{N}\int_{0}^{T}b(X_{s}^{i})^2ds\right)^{-1}
 \left(\sum_{i = 1}^{N}\int_{0}^{T}b(X_{s}^{i})\delta B_{s}^{i}\right)\\
 & &
 \hspace{3cm}
 \xrightarrow[N\rightarrow\infty]{\mathbb P}
 \theta_0 +
 \frac{1}{\|b\|_{f}^{2}}
 \mathbb E\left(\frac{1}{T}\int_{0}^{T}b(X_{s}^{1})\delta B_{s}^{1}\right) =\theta_0.
 \nonumber
\end{eqnarray}
Now, even when $\mathcal R_N\neq\emptyset$, the following proposition provides a suitable risk bound on the truncated auxiliary estimator
\begin{displaymath}
\widehat\theta_{N}^{\mathfrak d} :=
\widehat\theta_N\mathbf 1_{D_N\geqslant\mathfrak d}
\quad {\rm with}\quad
\mathfrak d\in\Delta_f =\left(0,\frac{\|b\|_{f}^{2}}{2}\right].
\end{displaymath}
As when $H = 1/2$, the threshold $\mathfrak d$ is required in the proof of Proposition \ref{risk_bound_fBm} in order to control the $\mathbb L^2$-risk of $\widehat\theta_{N}^{\mathfrak d}$ by the sum of those of $D_N$ and $D_N\widehat\theta_N$ up to a multiplicative constant. In practice, when possible, the threshold $\mathfrak d$ should be lower than the highest (easily) computable lower-bound on $\|b\|_{f}^{2}/2$ (see Remark \ref{remark_threshold_fBm_part_1} for examples).
%

% Proposition : Risk bound (H > 1/2).

%
\begin{proposition}\label{risk_bound_fBm}
There exists a constant $\mathfrak c_{\ref{risk_bound_fBm}} > 0$, not depending on $N$, such that
\begin{displaymath}
\mathbb E[(\widehat\theta_{N}^{\mathfrak d} -\theta_0)^2]
\leqslant
\frac{\mathfrak c_{\ref{risk_bound_fBm}}}{N}
\left(1 +\frac{|\mathcal R_N|}{N}\right).
\end{displaymath}
Precisely,
\begin{displaymath}
\mathfrak c_{\ref{risk_bound_fBm}} =
\frac{1}{\mathfrak d^2}\left[
\frac{\sigma^2
\mathfrak c_{\ref{bound_variance_Skorokhod}}
\overline{\mathfrak m}_{H,M,T}}{T^{2 - 2H}}
\left[\left(\int_{-\infty}^{\infty}|b(x)|^{\frac{1}{H}}f(x)dx\right)^{2H} +
\int_{-\infty}^{\infty}b'(x)^2f(x)dx
\right] +\theta_{0}^{2}\|b^2\|_{f}^{2}\right].
\end{displaymath}
\end{proposition}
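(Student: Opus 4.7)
The plan is to mimic the two-step structure of the proof of Proposition \ref{risk_bound_BM}, using Proposition \ref{Skorokhod_Young_relationship} in place of the It\^o decomposition and Proposition \ref{bound_variance_Skorokhod} in place of the It\^o isometry. First I would apply Proposition \ref{Skorokhod_Young_relationship} with $\varphi = b$ to each copy $X^i$ to get
\begin{displaymath}
\int_{0}^{T}b(X_{s}^{i})\delta X_{s}^{i} =
\theta_0\int_{0}^{T}b(X_{s}^{i})^2ds +
\sigma\int_{0}^{T}b(X_{s}^{i})\delta B_{s}^{i},
\end{displaymath}
so that $\widehat\theta_N =\theta_0 + U_N/D_N$ with
\begin{displaymath}
U_N :=\frac{\sigma}{NT}\sum_{i = 1}^{N}\int_{0}^{T}b(X_{s}^{i})\delta B_{s}^{i}.
\end{displaymath}
The same truncation argument as in Step 1 of the proof of Proposition \ref{risk_bound_BM} then gives
\begin{displaymath}
\mathbb E[(\widehat\theta_{N}^{\mathfrak d} -\theta_0)^2]
\leqslant
\frac{1}{\mathfrak d^2}\bigl[\mathbb E(U_{N}^{2}) +\theta_{0}^{2}\mathbb E[(D_N -\|b\|_{f}^{2})^2]\bigr],
\end{displaymath}
so everything reduces to suitable bounds on $\mathbb E(U_N^2)$ and $\mathbb E[(D_N - \|b\|_f^2)^2]$.

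For $\mathbb E(U_N^2)$ I would split the double sum into the $N$ diagonal terms and the $|\mathcal R_N|$ off-diagonal terms, bound the latter by Cauchy-Schwarz (which, since $X^1,\dots,X^N$ share the same distribution, yields the diagonal moment), and then apply Proposition \ref{bound_variance_Skorokhod} to control
\begin{displaymath}
\mathbb E\bigl[(\textstyle\int_{0}^{T}b(X_{s}^{1})\delta B_{s}^{1})^2\bigr]
\leqslant
\mathfrak c_{\ref{bound_variance_Skorokhod}}\overline{\mathfrak m}_{H,M,T}
\Bigl[(\textstyle\int_{0}^{T}\mathbb E(|b(X_{s}^{1})|^{1/H})ds)^{2H} +
(\textstyle\int_{0}^{T}\mathbb E(b'(X_{s}^{1})^2)^{1/(2H)}ds)^{2H}\Bigr].
\end{displaymath}
The first bracket rewrites as $T^{2H}(\int |b|^{1/H}f)^{2H}$, while for the second I would apply Jensen's inequality to the convex function $x\mapsto x^{2H}$ (recall $2H>1$) to get the upper bound $T^{2H}\int b'(x)^2 f(x)dx$. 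Dividing by $(NT)^2/\sigma^2$ and using $N+|\mathcal R_N|\leqslant N(1+|\mathcal R_N|/N)$ yields exactly the $\sigma^2 T^{2H-2}$ contribution announced in $\mathfrak c_{\ref{risk_bound_fBm}}$.

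For $\mathbb E[(D_N - \|b\|_f^2)^2]$ the computation is formally identical to (\ref{risk_bound_BM_2}): the centering $\mathbb E(D_N)=\|b\|_f^2$ follows from Fubini and the definition of $f$, and bounding each covariance by the diagonal variance via Cauchy-Schwarz gives
\begin{displaymath}
\mathbb E[(D_N -\|b\|_{f}^{2})^2]\leqslant
\frac{\|b^2\|_{f}^{2}}{N}\bigl(1+|\mathcal R_N|/N\bigr),
\end{displaymath}
which contributes the remaining $\theta_0^2\|b^2\|_f^2$ term in $\mathfrak c_{\ref{risk_bound_fBm}}$. I expect the main subtlety to be in the $U_N$ step: one must verify that Proposition \ref{bound_variance_Skorokhod} applies with $\varphi=b$ (this is fine since $b'$ is bounded), handle the off-diagonal terms by a covariance inequality for Skorokhod integrals (Cauchy-Schwarz, exploiting that all copies share the same law), and perform the Jensen conversion of the $b'$ moment into the $\|b'\|_f^2$ quantity appearing in the constant. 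Everything else is routine manipulation of the truncation bound already used in Proposition \ref{risk_bound_BM}.
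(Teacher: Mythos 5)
Your proposal is correct and follows essentially the same route as the paper's proof: the decomposition $\widehat\theta_N = \theta_0 + U_N/D_N$, the diagonal/off-diagonal split of $\mathbb E(U_N^2)$ handled by Cauchy--Schwarz and Proposition \ref{bound_variance_Skorokhod}, the Jensen step converting the $b'$ moment into $\int b'(x)^2 f(x)dx$, and the reuse of the $D_N$ variance bound from Proposition \ref{risk_bound_BM} are all exactly what the paper does.
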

%

% Proof.

%
\begin{proof}
First of all, since $dX_{t}^{i} =\theta_0b(X_{t}^{i})dt +\sigma dB_{t}^{i}$ for every $i\in\{1,\dots,N\}$,
\begin{displaymath}
\widehat\theta_N =
\theta_0 +\frac{U_N}{D_N}
\quad {\rm with}\quad
U_N =
\frac{\sigma}{NT}\sum_{i = 1}^{N}\int_{0}^{T}b(X_{s}^{i})\delta B_{s}^{i}.
\end{displaymath}
By Cauchy-Schwarz's inequality, Proposition \ref{bound_variance_Skorokhod} and Jensen's inequality,
\begin{eqnarray*}
 \mathbb E(U_{N}^{2}) & = &
 \frac{\sigma^2}{N^2T^2}
 \sum_{i = 1}^{N}\mathbb E\left[
 \left(\int_{0}^{T}b(X_{s}^{i})\delta B_{s}^{i}\right)^2\right]\\
 & &
 \hspace{2cm} +
 \frac{\sigma^2}{N^2T^2}\sum_{(i,k)\in\mathcal R_N}
 \mathbb E\left(\left(\int_{0}^{T}b(X_{s}^{i})\delta B_{s}^{i}\right)
 \left(\int_{0}^{T}b(X_{s}^{k})\delta B_{s}^{k}\right)\right)\\
 & \leqslant &
 \frac{\sigma^2\mathfrak c_{\ref{bound_variance_Skorokhod}}
 \overline{\mathfrak m}_{H,M,T}}{NT^2}\left(1 +\frac{|\mathcal R_N|}{N}\right)\left[
 \left(\int_{0}^{T}\mathbb E(|b(X_{s}^{1})|^{\frac{1}{H}})ds\right)^{2H}\right.\\
 & &
 \left.\hspace{6cm} +
 \left(\int_{0}^{T}\mathbb E(b'(X_{s}^{1})^2)^{\frac{1}{2H}}ds\right)^{2H}
 \right]\\
 & \leqslant &
 \underbrace{
 \frac{\sigma^2\mathfrak c_{\ref{bound_variance_Skorokhod}}
 \overline{\mathfrak m}_{H,M,T}}{T^{2 - 2H}}
 \left[\left(\int_{-\infty}^{\infty}|b(x)|^{\frac{1}{H}}f(x)dx\right)^{2H} +
 \int_{-\infty}^{\infty}b'(x)^2f(x)dx
 \right]}_{=:\mathfrak c_1}\frac{1}{N}\left(1 +\frac{|\mathcal R_N|}{N}\right).
\end{eqnarray*}
As in the proof of Proposition \ref{risk_bound_BM},
\begin{displaymath}
\mathbb E[(D_N -\|b\|_{f}^{2})^2]
\leqslant
\frac{\|b^2\|_{f}^{2}}{N}\left(1 +\frac{|\mathcal R_N|}{N}\right),
\end{displaymath}
and then
\begin{eqnarray*}
 \mathbb E[(\widehat\theta_{N}^{\mathfrak d} -\theta_0)^2]
 & \leqslant &
 \frac{1}{\mathfrak d^2}[\mathbb E(U_{N}^{2}) +
 \theta_{0}^{2}\mathbb E[(D_N -\|b\|_{f}^{2})^2]]\\
 & \leqslant &
 \frac{\mathfrak c_2}{N}\left(1 +\frac{|\mathcal R_N|}{N}\right)
 \quad {\rm with}\quad
 \mathfrak c_2 =
 \frac{\mathfrak c_1 +\theta_{0}^{2}\|b^2\|_{f}^{2}}{\mathfrak d^2}.
\end{eqnarray*}
\end{proof}
\noindent
%

% Remark : How to choose the threshold in practice (H > 1/2) (part 1).

%
\begin{remark}\label{remark_threshold_fBm_part_1}
Let us conclude this section on how to find a computable lower-bound $\mathfrak d_{\max}$ on $\|b\|_{f}^{2}/2$ in some situations:
\begin{enumerate}
 \item Assume that $b(.)^2\geqslant\mathfrak b$ with a known $\mathfrak b > 0$. As for $H = 1/2$, since
 \begin{displaymath}
 D_N =\frac{1}{NT}\sum_{i = 1}^{N}
 \int_{0}^{T}b(X_{s}^{i})^2ds
 \geqslant\mathfrak b
 \quad {\rm and}\quad
 \|b\|_{f}^{2} =\int_{-\infty}^{\infty}b(x)^2f(x)dx
 \geqslant\mathfrak b,
 \end{displaymath}
 one should obviously take $\mathfrak d_{\max} =\mathfrak b/2$, and
 \begin{displaymath}
 \widehat\theta_{N}^{\mathfrak d} =\widehat\theta_N
 \quad {\rm for}\quad
 \mathfrak d =\mathfrak d_{\max}.
 \end{displaymath}
 \item Assume that $X^1,\dots,X^N$ are fractional Ornstein-Uhlenbeck processes (i.e. $b = -{\rm Id}_{\mathbb R}$), consider $T_{\max} > T$, and let $m_t = x_0e^{-\theta_0t}$ (resp. $\sigma_t > 0$) be the common average (resp. standard deviation) of $X_{t}^{1},\dots,X_{t}^{N}$ for every $t\in(0,T]$. Let us provide a suitable lower bound on $\|b\|_{f}^{2}/2$ when $x_0\neq 0$. By assuming that there exists a known constant $\theta_{\max} > 0$ such that $\theta_0\in (0,\theta_{\max}]$, as for $H = 1/2$,
 \begin{displaymath}
 \|b\|_{f}^{2} =
 \frac{1}{T}\int_{0}^{T}m_{s}^{2}ds +
 \frac{1}{T}\int_{0}^{T}\sigma_{s}^{2}ds
 \geqslant x_{0}^{2}e^{-2\theta_{\max}T}.
 \end{displaymath}
 Then, one should take
 \begin{displaymath}
 \mathfrak d_{\max} =
 \frac{x_{0}^{2}}{2}e^{-2\theta_{\max}T_{\max}}.
 \end{displaymath}
\end{enumerate}
\end{remark}
%

% Subsection : A computable estimator.

%
\subsection{A computable estimator}\label{section_computable_estimator}
When $H = 1/2$, $\widehat\theta_N$ is computable because as mentioned in Section \ref{risk_bound_BM}, the Skorokhod integral coincides with It\^o's integral on $\mathbb H^2$. When $H > 1/2$, the Skorokhod integral and then $\widehat\theta_N$ are not computable. However, this subsection deals with the approximation $\widetilde\theta_N$ of $\widehat\theta_N$ which is computable by solving Equation (\ref{computable_estimator}). First of all, let us explain why $\widetilde\theta_N$ is defined this way. For every $i\in\{1,\dots,N\}$, since
\begin{displaymath}
\mathbf D_{s}^{i}X_{t}^{i} =
\sigma\mathbf 1_{[0,t]}(s)\exp\left(\theta_0\int_{s}^{t}b'(X_{u}^{i})du\right)
\textrm{ $;$ }
\forall s,t\in [0,T],
\end{displaymath}
by Proposition \ref{Skorokhod_Young_relationship} and by the chain rule for the Malliavin derivative (see Nualart \cite{NUALART06}, Proposition 1.2.3),
\begin{eqnarray*}
 \int_{0}^{T}b(X_{s}^{i})\delta X_{s}^{i} & = &
 \int_{0}^{T}b(X_{s}^{i})dX_{s}^{i} -
 \alpha_H\sigma\int_{0}^{T}\int_{0}^{T}
 b'(X_{t}^{i})(\mathbf D_{s}^{i}X_{t}^{i})|t - s|^{2H - 2}dsdt\\
 & = &
 \int_{0}^{T}b(X_{s}^{i})dX_{s}^{i} -
 \alpha_H\sigma^2\int_{0}^{T}\int_{0}^{t}
 b'(X_{t}^{i})\exp\left(\theta_0\int_{s}^{t}b'(X_{u}^{i})du\right)|t - s|^{2H - 2}dsdt.
\end{eqnarray*}
Then, since $\widehat\theta_N$ is a converging estimator of $\theta_0$ as established in Subsection \ref{section_risk_bound_fBm},
\begin{displaymath}
\widehat\theta_N - I_N =
\Phi_N(\theta_0 - I_N)\approx
\Phi_N(\widehat\theta_N - I_N),
\end{displaymath}
where
\begin{displaymath}
\Phi_N(r) :=
-\frac{\alpha_H\sigma^2}{NTD_N}\sum_{i = 1}^{N}
\int_{0}^{T}\int_{0}^{t}
b'(X_{t}^{i})\exp\left[(r + I_N)
\int_{s}^{t}b'(X_{u}^{i})du\right]|t - s|^{2H - 2}dsdt
\end{displaymath}
and, by the change of variable formula for Young's integral,
\begin{eqnarray*}
 I_N & := &
 \frac{1}{NTD_N}
 \sum_{i = 1}^{N}\int_{0}^{T}b(X_{s}^{i})dX_{s}^{i}\\
 & = &
 \frac{1}{NTD_N}
 \sum_{i = 1}^{N}({\tt b}(X_{T}^{i}) - {\tt b}(x_0))
 \quad {\rm with}\quad
 {\tt b}'(.) = b(.).
\end{eqnarray*}
This legitimates to consider the estimator $\widetilde\theta_N := I_N + R_N$ of $\theta_0$, where $R_N$ is the fixed point of the map $\Phi_N$. Let us establish that $R_N$ exists and is unique under the condition (\ref{existence_uniqueness_R_N_1}) stated below.
%

% Proposition : Existence and uniqueness of the fixed point of \Phi_N.

%
\begin{proposition}\label{existence_uniqueness_R_N}
Assume that $b'(.)\leqslant 0$. If
\begin{equation}\label{existence_uniqueness_R_N_1}
T^{2H}\frac{M_N}{D_N}\leqslant
\frac{\mathfrak c}{\overline\alpha_H\sigma^2
\|b'\|_{\infty}^{2}},
\end{equation}
where $\mathfrak c$ is a deterministic constant arbitrarily chosen in $(0,1)$,
\begin{displaymath}
M_N :=
e^{\|b'\|_{\infty}|I_N|T}
\quad\textrm{and}\quad
\overline\alpha_H :=\frac{\alpha_H}{2H(2H + 1)},
\end{displaymath}
then $\Phi_N$ is a contraction from $\mathbb R_+$ into $\mathbb R_+$. Therefore, $R_N$ exists and is unique.
\end{proposition}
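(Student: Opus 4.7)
The plan is to apply the Banach fixed point theorem to $\Phi_N$ on the complete metric space $\mathbb R_+$. Two things must be checked: (i) $\Phi_N(\mathbb R_+)\subset\mathbb R_+$, and (ii) $\Phi_N$ is a strict contraction with Lipschitz constant $\mathfrak c<1$ under hypothesis (\ref{existence_uniqueness_R_N_1}).

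For (i), the sign analysis is immediate once one uses $b'\leqslant 0$. In the integrand defining $\Phi_N(r)$, the factor $b'(X_t^i)$ is nonpositive and is preceded by a minus sign, while the exponential, the kernel $|t-s|^{2H-2}$, and the constants $\alpha_H\sigma^2$, $D_N$ (which is assumed positive, as otherwise (\ref{existence_uniqueness_R_N_1}) cannot hold) are nonnegative. Hence $\Phi_N(r)\geqslant 0$ for every $r\geqslant 0$.

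For (ii), I would write, for $r,r'\geqslant 0$ and any $i\in\{1,\dots,N\}$, $s\leqslant t$ in $[0,T]$,
\begin{displaymath}
A_i(s,t):=\int_s^t b'(X_u^i)\,du\in[-\|b'\|_\infty(t-s),0],
\end{displaymath}
and use the mean value inequality
\begin{displaymath}
\bigl|e^{(r+I_N)A_i(s,t)}-e^{(r'+I_N)A_i(s,t)}\bigr|
\leqslant |r-r'|\cdot|A_i(s,t)|\cdot\sup_{u\geqslant 0}e^{(u+I_N)A_i(s,t)}.
\end{displaymath}
The key observation is that, since $A_i(s,t)\leqslant 0$, we have $e^{uA_i(s,t)}\leqslant 1$ for every $u\geqslant 0$, and therefore
\begin{displaymath}
e^{(u+I_N)A_i(s,t)}\leqslant e^{I_N A_i(s,t)}\leqslant e^{|I_N|\|b'\|_\infty T}=M_N.
\end{displaymath}
Combining this with $|b'(X_t^i)|\leqslant\|b'\|_\infty$ and $|A_i(s,t)|\leqslant\|b'\|_\infty(t-s)$ yields
\begin{displaymath}
|\Phi_N(r)-\Phi_N(r')|\leqslant
\frac{\alpha_H\sigma^2\|b'\|_\infty^{2}M_N}{NTD_N}|r-r'|
\sum_{i=1}^{N}\int_0^T\!\int_0^t (t-s)^{2H-1}\,ds\,dt.
\end{displaymath}

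The remaining step is a direct computation of the double integral: $\int_0^t(t-s)^{2H-1}ds = t^{2H}/(2H)$ and then $\int_0^T t^{2H}/(2H)\,dt = T^{2H+1}/(2H(2H+1))$. After simplification, the factor $1/(2H(2H+1))$ absorbed into $\alpha_H$ produces $\overline\alpha_H$, the factor $N$ cancels, and one obtains
\begin{displaymath}
|\Phi_N(r)-\Phi_N(r')|\leqslant
\overline\alpha_H\sigma^2\|b'\|_\infty^{2}\cdot T^{2H}\frac{M_N}{D_N}\cdot|r-r'|
\leqslant\mathfrak c\,|r-r'|
\end{displaymath}
by (\ref{existence_uniqueness_R_N_1}). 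Since $\mathfrak c\in(0,1)$ and $(\mathbb R_+,|\cdot|)$ is complete, the Banach fixed point theorem provides existence and uniqueness of $R_N$. The only subtle point in the argument is the uniform control of the exponential for $u\geqslant 0$, which genuinely relies on the sign assumption $b'\leqslant 0$; without it the bound would blow up in $r$ and a contraction on all of $\mathbb R_+$ could not be obtained.
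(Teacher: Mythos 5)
Your proof is correct and follows essentially the same route as the paper: nonnegativity of $\Phi_N$ from $b'\leqslant 0$, the mean value inequality for the exponential with the uniform bound $\sup_{x\leqslant 0}e^x = 1$ yielding the factor $M_N$, and the explicit computation $\int_0^T\int_0^t(t-s)^{2H-1}\,ds\,dt = T^{2H+1}/(2H(2H+1))$ producing $\overline\alpha_H$. The only cosmetic difference is that the paper factors out $e^{I_N A_i(s,t)}$ before applying the mean value theorem to $u\mapsto e^{uA_i(s,t)}$, whereas you apply it directly to $u\mapsto e^{(u+I_N)A_i(s,t)}$; the resulting constant is identical.
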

%

% Proof.

%
\begin{proof}
Since $b'(.)\leqslant 0$, $\Phi_N$ is nonnegative, and in particular $\Phi_N(\mathbb R_+)\subset\mathbb R_+$. Moreover, by (\ref{existence_uniqueness_R_N_1}), for every $r,\rho\in\mathbb R_+$,
\begin{eqnarray*}
 |\Phi_N(r) -\Phi_N(\rho)| & \leqslant &
 \frac{\alpha_H\sigma^2}{NTD_N}\sum_{i = 1}^{N}
 \int_{0}^{T}\int_{0}^{t}|t - s|^{2H - 2}|b'(X_{t}^{i})|
 \exp\left(I_N\int_{s}^{t}b'(X_{u}^{i})du\right)\\
 & &
 \hspace{3.5cm}\times
 \left|\exp\left(r\int_{s}^{t}b'(X_{u}^{i})du\right) -
 \exp\left(\rho
 \int_{s}^{t}b'(X_{u}^{i})du\right)\right|dsdt\\
 & \leqslant &
 \frac{\alpha_H\sigma^2}{NTD_N}
 \|b'\|_{\infty}M_N
 \sum_{i = 1}^{N}
 \int_{0}^{T}\int_{0}^{t}|t - s|^{2H - 2}\sup_{x\in\mathbb R_-}e^x\left|
 (r -\rho)\int_{s}^{t}b'(X_{u}^{i})du\right|dsdt\\
 & \leqslant &
 \overline\alpha_H\sigma^2\|b'\|_{\infty}^{2}
 T^{2H}\frac{M_N}{D_N}|r -\rho|
 \leqslant\mathfrak c|r -\rho|.
\end{eqnarray*}
So, $\Phi_N$ is a contraction from $\mathbb R_+$ into $\mathbb R_+$, and then $R_N$ exists and is unique by Picard's fixed point theorem.
\end{proof}
\noindent
When $\mathcal R_N =\emptyset$, the following proposition shows that the estimator $\widetilde\theta_{N}^{\mathfrak c} :=\widetilde\theta_N\mathbf 1_{\Omega_N}$, where
\begin{displaymath}
\Omega_N :=\left\{T^{2H}\frac{M_N}{D_N}\leqslant
\frac{\mathfrak c}{\overline\alpha_H\sigma^2
\|b'\|_{\infty}^{2}}\right\},
\end{displaymath}
is consistent.
%

% Proposition : Consistency of the computable estimator.

%
\begin{proposition}\label{consistency_computable_estimator}
Assume that $b'(.)\leqslant 0$ and that $\theta_0 > 0$. If $\mathcal R_N =\emptyset$ and
\begin{equation}\label{consistency_computable_estimator_1}
\frac{T^{2H}}{\|b\|_{f}^{2}}
\exp\left(\frac{\|b'\|_{\infty}}{\|b\|_{f}^{2}}
|\mathbb E({\tt b}(X_{T}^{1})) - {\tt b}(x_0)|\right)
<\frac{\mathfrak c}{\overline\alpha_H\sigma^2
\|b'\|_{\infty}^{2}},
\end{equation}
then $\widetilde\theta_{N}^{\mathfrak c}\rightarrow\theta_0$ in probability when $N\rightarrow\infty$.
\end{proposition}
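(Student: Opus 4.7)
The strategy is to transfer the consistency of $\widehat\theta_N$, already provided by (\ref{convergence_fBm}), to $\widetilde\theta_N$ via the contraction of $\Phi_N$ from Proposition \ref{existence_uniqueness_R_N}. The key observation is the \emph{exact} fixed-point type identity
\begin{displaymath}
\widehat\theta_N - I_N = \Phi_N(\theta_0 - I_N),
\end{displaymath}
which is a direct rewriting of $\widehat\theta_N$ via Proposition \ref{Skorokhod_Young_relationship} applied to $\varphi=b$; so $\widehat\theta_N$ and $\widetilde\theta_N$ are, respectively, the image under $\Phi_N$ of $\theta_0 - I_N$ and the fixed point of $\Phi_N$, both viewed around the same origin $I_N$.

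First, because $\mathcal R_N=\emptyset$, the classical law of large numbers yields $D_N\xrightarrow[N\to\infty]{\mathbb P}\|b\|_f^2$ and, via Slutsky's lemma, $I_N\xrightarrow[N\to\infty]{\mathbb P}I_\infty:=[\mathbb E({\tt b}(X_T^1))-{\tt b}(x_0)]/(T\|b\|_f^2)$. Consequently $M_N=e^{\|b'\|_\infty|I_N|T}$ converges in probability to $\exp(\|b'\|_\infty|I_\infty|T)$, and the continuous-mapping theorem together with the strict inequality in (\ref{consistency_computable_estimator_1}) gives $\mathbb P(\Omega_N)\to 1$, since the left-hand side of (\ref{consistency_computable_estimator_1}) is exactly the in-probability limit of $T^{2H}M_N/D_N$.

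Second, to ensure that the contraction bound from Proposition \ref{existence_uniqueness_R_N} (which only holds on $\mathbb R_+$) can be applied at $\theta_0-I_N$, I would verify that $I_\infty\leqslant\theta_0$. This relies on the Young change-of-variable formula ${\tt b}(X_T^1)-{\tt b}(x_0)=\int_0^T b(X_s^1)dX_s^1$, combined with Proposition \ref{Skorokhod_Young_relationship} for $\varphi=b$ and the centredness property (\ref{zero_mean_Skorokhod_integral}) of the Skorokhod integral, which yield
\begin{displaymath}
\mathbb E({\tt b}(X_T^1)-{\tt b}(x_0))=\theta_0 T\|b\|_f^2+\alpha_H\sigma^2\mathbb E\left[\int_0^T\int_0^t b'(X_t^1)\exp\left(\theta_0\int_s^t b'(X_u^1)du\right)|t-s|^{2H-2}dsdt\right].
\end{displaymath}
As $b'(.)\leqslant 0$, the trace-correction term is nonpositive, hence $I_\infty\leqslant\theta_0$, with strict inequality in the nondegenerate case, so $\mathbb P(\theta_0-I_N\geqslant 0)\to 1$. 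The degenerate case $b\equiv\mathrm{const}$ forces $\|b'\|_\infty=0$, $\Phi_N\equiv 0$, $R_N=0$, and the result becomes $\widetilde\theta_N=I_N\xrightarrow[N\to\infty]{\mathbb P}\theta_0$ directly.

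Finally, on $\mathcal E_N:=\Omega_N\cap\{\theta_0-I_N\geqslant 0\}$, both $R_N$ and $\theta_0-I_N$ lie in $\mathbb R_+$, so the contraction of Proposition \ref{existence_uniqueness_R_N} applies. Adding and subtracting $\Phi_N(\theta_0-I_N)=\widehat\theta_N-I_N$ in $R_N-(\theta_0-I_N)$ yields
\begin{displaymath}
|\widetilde\theta_N-\theta_0|=|R_N-(\theta_0-I_N)|\leqslant|\Phi_N(R_N)-\Phi_N(\theta_0-I_N)|+|\widehat\theta_N-\theta_0|\leqslant\mathfrak c|\widetilde\theta_N-\theta_0|+|\widehat\theta_N-\theta_0|,
\end{displaymath}
i.e. $|\widetilde\theta_N-\theta_0|\leqslant(1-\mathfrak c)^{-1}|\widehat\theta_N-\theta_0|$ on $\mathcal E_N$. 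Since $\widehat\theta_N\xrightarrow[N\to\infty]{\mathbb P}\theta_0$ by (\ref{convergence_fBm}) and $\mathbb P(\mathcal E_N)\to 1$, this gives $\widetilde\theta_N^{\mathfrak c}\xrightarrow[N\to\infty]{\mathbb P}\theta_0$. The main obstacle is the sign analysis of $\theta_0-I_N$, which is what forces the trace-correction computation of step two rather than a direct contraction argument on the whole line $\mathbb R$.
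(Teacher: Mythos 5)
Your proof is correct and its skeleton is the paper's: transfer the consistency of $\widehat\theta_N$ from (\ref{convergence_fBm}) to $\widetilde\theta_N$ via the $\mathfrak c$-contraction of $\Phi_N$ on $\Omega_N$, then show $\mathbb P(\Omega_{N}^{c})\rightarrow 0$ by the law of large numbers and the strict inequality in (\ref{consistency_computable_estimator_1}). The one place you genuinely diverge is the sign analysis of $\theta_0 - I_N$. You are right that Proposition \ref{existence_uniqueness_R_N} only asserts the Lipschitz bound on $\mathbb R_+\times\mathbb R_+$, and the paper applies it to the pair $(R_N,\theta_0 - I_N)$ without comment; but your detour is avoidable, because the mean value theorem computation in the proof of Proposition \ref{existence_uniqueness_R_N} extends verbatim to this particular pair: with $\rho =\theta_0 - I_N$ the relevant exponent is $(\rho + I_N)\int_{s}^{t}b'(X_{u}^{i})du =\theta_0\int_{s}^{t}b'(X_{u}^{i})du\leqslant 0$ since $\theta_0 > 0$ and $b'\leqslant 0$, so the intermediate exponent $(\xi + I_N)\int_{s}^{t}b'(X_{u}^{i})du$, with $\xi$ between $R_N\geqslant 0$ and $\theta_0 - I_N$, is still bounded by $\log M_N$, and $|\Phi_N(R_N) -\Phi_N(\theta_0 - I_N)|\leqslant\mathfrak c|R_N - (\theta_0 - I_N)|$ holds on $\Omega_N$ whatever the sign of $\theta_0 - I_N$ --- this is precisely what the hypothesis $\theta_0 > 0$ buys. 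Your alternative, proving $\mathbb P(\theta_0\geqslant I_N)\rightarrow 1$ from the identity $\mathbb E({\tt b}(X_{T}^{1})) - {\tt b}(x_0) =\theta_0T\|b\|_{f}^{2} +\alpha_H\sigma^2C$ with $C\leqslant 0$ (a correct computation, being exactly the identity behind the definition of $\Phi_N$), only closes if $C < 0$ strictly in the nonconstant case; that requires the law of $X_{t}^{1}$ to charge the open set where $b' < 0$, a full-support property which holds for additive fractional noise but is neither stated nor needed anywhere in the paper. So your argument works at the price of an extra (standard) fact, whereas the direct route needs only $\theta_0 > 0$; the remaining steps --- the $(1 -\mathfrak c)^{-1}$ bound, the treatment of the exceptional event, and the law of large numbers for $\Omega_N$ --- coincide with the paper's.
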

%

% Proof.

%
\begin{proof}
First, $\widehat\theta_N = I_N +\Phi_N(\theta_0 - I_N)$ and, on the event $\Omega_N$, $R_N =\widetilde\theta_N - I_N$ is the unique fixed point of the $\mathfrak c$-contraction $\Phi_N$ by Proposition \ref{existence_uniqueness_R_N}. Then,
\begin{eqnarray*}
 |\widetilde\theta_N -\widehat\theta_N|\mathbf 1_{\Omega_N} & = &
 |\Phi_N(R_N) -\Phi_N(\theta_0 - I_N)|\mathbf 1_{\Omega_N}\\
 & \leqslant &
 \mathfrak c|R_N - (\theta_0 - I_N)|\mathbf 1_{\Omega_N}
 \leqslant
 \mathfrak c|\widetilde\theta_N -\widehat\theta_N|\mathbf 1_{\Omega_N} +
 \mathfrak c|\widehat\theta_N -\theta_0|\mathbf 1_{\Omega_N}.
\end{eqnarray*}
Since $\mathfrak c\in (0,1)$,
\begin{displaymath}
|\widetilde\theta_N -\widehat\theta_N|\mathbf 1_{\Omega_N}
\leqslant
\frac{\mathfrak c}{1 -\mathfrak c}
|\widehat\theta_N -\theta_0|\mathbf 1_{\Omega_N},
\end{displaymath}
and thus
\begin{equation}\label{consistency_computable_estimator_2}
|\widetilde\theta_{N}^{\mathfrak c} -\theta_0| =
|\widetilde\theta_N -\theta_0|\mathbf 1_{\Omega_N} +
|\theta_0|\mathbf 1_{\Omega_{N}^{c}}
\leqslant
\frac{1}{1 -\mathfrak c}
|\widehat\theta_N -\theta_0| +
|\theta_0|\mathbf 1_{\Omega_{N}^{c}}.
\end{equation}
Now, by the (usual) law of large numbers,
\begin{displaymath}
D_N\xrightarrow[N\rightarrow\infty]{\mathbb P}
\frac{1}{T}\int_{0}^{T}\mathbb E(b(X_{s}^{1})^2)ds =\|b\|_{f}^{2} > 0
\end{displaymath}
and
\begin{displaymath}
\frac{1}{N}\sum_{i = 1}^{N}({\tt b}(X_{T}^{i}) - {\tt b}(x_0))
\xrightarrow[N\rightarrow\infty]{\mathbb P}
\mathbb E({\tt b}(X_{T}^{1})) - {\tt b}(x_0).
\end{displaymath}
Therefore,
\begin{eqnarray*}
 \frac{M_N}{D_N} & = &
 \frac{1}{D_N}\exp\left(\frac{\|b'\|_{\infty}}{ND_N}\left|
 \sum_{i = 1}^{N}({\tt b}(X_{T}^{i}) - {\tt b}(x_0))\right|\right)\\
 & &
 \hspace{3cm}
 \xrightarrow[N\rightarrow\infty]{\mathbb P}
 \frac{1}{\|b\|_{f}^{2}}
 \exp\left(\frac{\|b'\|_{\infty}}{\|b\|_{f}^{2}}
 |\mathbb E({\tt b}(X_{T}^{1})) - {\tt b}(x_0)|\right).
\end{eqnarray*}
When $N\rightarrow\infty$, this leads to $\mathbb P(\Omega_{N}^{c})\rightarrow 0$ by (\ref{consistency_computable_estimator_1}), and then to $\mathbf 1_{\Omega_{N}^{c}}\rightarrow 0$ in probability. In conclusion, by Inequality (\ref{consistency_computable_estimator_2}) together with the convergence result (\ref{convergence_fBm}),
\begin{displaymath}
|\widetilde\theta_{N}^{\mathfrak c} -\theta_0|
\xrightarrow[N\rightarrow\infty]{\mathbb P}0.
\end{displaymath}
\end{proof}
%

% Remark : Remark on the condition on the time horizon.

%
\begin{remark}\label{remark_condition_T}
The condition (\ref{consistency_computable_estimator_1}) in the statement of Proposition \ref{consistency_computable_estimator} may be thought of as a condition on the time horizon $T$ which can be chosen arbitrarily small in our estimation framework, even when $X^1,\dots,X^N$ have been observed on $[0,T_{\max}]$ with $0 < T\leqslant T_{\max}$. For instance, assume that there exists a (computable) constant $\ell > 0$, not depending on $T$, such that $\|b\|_f\geqslant\ell$ ($\ell =\sqrt{2\mathfrak d_{\max}}$ in Remark \ref{remark_threshold_fBm_part_1}), and let us show that there exists a (computable) constant $\mathfrak m_{\ell} > 0$, not depending on $T$, such that if $T <\mathfrak m_{\ell}$, then $T$ fulfills (\ref{consistency_computable_estimator_1}). By Taylor's formula with Lagrange's remainder, and since $b'$ is bounded,
\begin{eqnarray*}
 |\mathbb E({\tt b}(X_{T}^{1})) - {\tt b}(x_0)|
 & \leqslant &
 |b(x_0)|
 \mathbb E(|X_{T}^{1} - x_0|) +
 \frac{\|b'\|_{\infty}}{2}\mathbb E[(X_{T}^{1} - x_0)^2]\\
 & \leqslant &
 \mathfrak c_1[
 \mathbb E[(X_{T}^{1} - x_0)^2]^{\frac{1}{2}} +\mathbb E[(X_{T}^{1} - x_0)^2]]
 \quad {\rm with}\quad
 \mathfrak c_1 = |b(x_0)|\vee\frac{\|b'\|_{\infty}}{2}.
\end{eqnarray*}
Moreover,
\begin{eqnarray*}
 \mathbb E[(X_{T}^{1} - x_0)^2]
 & = &
 \mathbb E\left[\left(\theta_0\int_{0}^{T}b(X_{s}^{1})ds +\sigma B_{T}^{1}\right)^2\right]\\
 & \leqslant &
 2\theta_{0}^{2}T\int_{0}^{T}\mathbb E[b(X_{s}^{1})^2]ds +
 2\sigma^2\mathbb E(|B_{T}^{1}|^2) =
 2(\theta_{0}^{2}T^2\|b\|_{f}^{2} +\sigma^2T^{2H}).
\end{eqnarray*}
Then,
\begin{eqnarray*}
 \exp\left(\frac{\|b'\|_{\infty}}{\|b\|_{f}^{2}}
 |\mathbb E({\tt b}(X_{T}^{1})) - {\tt b}(x_0)|\right)
 & \leqslant &
 \exp\left(\frac{\mathfrak c_1\|b'\|_{\infty}}{\|b\|_{f}^{2}}
 (\sqrt 2(\theta_0T\|b\|_f +\sigma T^H) +
 2(\theta_{0}^{2}T^2\|b\|_{f}^{2} +\sigma^2T^{2H}))\right)\\
 & \leqslant &
 \exp\left(2\mathfrak c_1\|b'\|_{\infty}\left(
 \theta_{0}^{2}T^2 +
 \frac{\theta_0T}{\|b\|_f} +
 \frac{\sigma T^H}{\|b\|_{f}^{2}}(1 +\sigma T^H)\right)\right).
\end{eqnarray*}
Therefore, by assuming that there exists a known constant $\theta_{\max} > 0$ such that $\theta_0\in (0,\theta_{\max}]$, $T$ fulfills (\ref{consistency_computable_estimator_1}) when
\begin{displaymath}
T <\left(
\frac{\mathfrak c\ell^2}{\overline\alpha_H\sigma^2
\|b'\|_{\infty}^{2}}\exp\left(-2\mathfrak c_1\|b'\|_{\infty}\left(
\theta_{\max}^{2}T_{\max}^2 +
\frac{\theta_{\max}T_{\max}}{\ell} +
\frac{\sigma T_{\max}^{H}}{\ell^2}(1 +\sigma T_{\max}^{H})\right)\right)\right)^{\frac{1}{2H}}.
\end{displaymath}
\end{remark}
\noindent
Still when $\mathcal R_N =\emptyset$, the following proposition provides an asymptotic confidence interval for $\widetilde\theta_{N}^{\mathfrak c}$.
%

% Proposition : Asymptotic confidence interval (H > 1/2 and R = I).

%
\begin{proposition}\label{ACI_fBm}
Assume that $b'(.)\leqslant 0$ and that $\theta_0 > 0$. If $\mathcal R_N =\emptyset$ and $T$ satisfies (\ref{consistency_computable_estimator_1}) with $\mathfrak c = 1/2$, then
\begin{displaymath}
\lim_{N\rightarrow\infty}
\mathbb P\left(\theta_0\in\left[
\widetilde\theta_{N}^{\mathfrak c =\frac{1}{2}} -\frac{2\overline Y_{N}^{1/2}}{\sqrt ND_N}u_{1 -\frac{\alpha}{4}}
\textrm{ $;$ }
\widetilde\theta_{N}^{\mathfrak c =\frac{1}{2}} +\frac{2\overline Y_{N}^{1/2}}{\sqrt ND_N}u_{1 -\frac{\alpha}{4}}\right]\right)\geqslant 1 -\alpha
\end{displaymath}
for every $\alpha\in (0,1)$, where
\begin{eqnarray*}
 \overline Y_N & := &
 \frac{\sigma^2}{NT^2}
 \sum_{i = 1}^{N}\left(
 \alpha_H\int_{[0,T]^2}|b(X_{s}^{i})|\cdot |b(X_{t}^{i})|\cdot |t - s|^{2H - 2}dsdt\right.\\
 & &
 \hspace{2cm}\left.
 +\alpha_{H}^{2}\sigma^2
 \int_{[0,T]^2}\int_{0}^{v}\int_{0}^{u}|u - u'|^{2H - 2}|v - v'|^{2H - 2}
 b'(X_{v}^{i})b'(X_{u}^{i})du'dv'dudv\right).
\end{eqnarray*}
\end{proposition}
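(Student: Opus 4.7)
The plan is to transfer the analysis from the computable estimator $\widetilde\theta_N^{\mathfrak c}$ to the auxiliary estimator $\widehat\theta_N$ (which is amenable to a CLT), and then to combine a central limit theorem with an upper bound on the asymptotic variance that is captured by $\overline Y_N$.

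First I would recall from the proof of Proposition \ref{consistency_computable_estimator} that on $\Omega_N$ one has
$|\widetilde\theta_N - \widehat\theta_N| \leqslant \tfrac{\mathfrak c}{1 -\mathfrak c}|\widehat\theta_N -\theta_0|$, so taking $\mathfrak c = 1/2$ yields $|\widetilde\theta_N -\theta_0| \leqslant 2|\widehat\theta_N -\theta_0|$ on $\Omega_N$. Hence
\begin{displaymath}
\mathbb P\!\left(|\widetilde\theta_{N}^{\mathfrak c = 1/2} -\theta_0| \leqslant \tfrac{2\overline Y_N^{1/2}}{\sqrt N D_N}u_{1-\alpha/4}\right)
\geqslant
\mathbb P(\Omega_N) - \mathbb P\!\left(|\widehat\theta_N -\theta_0| > \tfrac{\overline Y_N^{1/2}}{\sqrt N D_N}u_{1-\alpha/4}\right).
\end{displaymath}
Since $\mathbb P(\Omega_N)\to 1$ (as in the proof of Proposition \ref{consistency_computable_estimator}), the problem reduces to showing that the second probability above is asymptotically at most $\alpha/2$, which will imply a liminf at least $1 -\alpha/2 \geqslant 1 -\alpha$.

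Next, writing $\widehat\theta_N -\theta_0 = U_N/D_N$ as in the proof of Proposition \ref{risk_bound_fBm}, the random variables
$Z^i := \tfrac{\sigma}{T}\int_0^T b(X_s^i)\delta B_s^i$ are i.i.d. (because $\mathcal R_N = \emptyset$), centered by (\ref{zero_mean_Skorokhod_integral}), and square-integrable by Proposition \ref{bound_variance_Skorokhod}. The classical CLT then yields $\sqrt N\, U_N \xrightarrow{\mathcal D} \mathcal N(0,\mathrm{var}(Z^1))$, while the LLN gives $D_N \xrightarrow{\mathbb P} \|b\|_f^2$, so Slutsky produces
$\sqrt N(\widehat\theta_N -\theta_0) \xrightarrow{\mathcal D} \mathcal N(0, \mathrm{var}(Z^1)/\|b\|_f^4)$.

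The key step is to compare $\mathrm{var}(Z^1)$ with the limit of $\overline Y_N$. I would apply the Skorokhod isometry (\ref{isometry_divergence}) together with the explicit Malliavin derivative
$\mathbf D_{s}^{i}X_{t}^{i} = \sigma\mathbf 1_{[0,t]}(s)\exp(\theta_0\int_s^t b'(X_u^i)du)$
and the chain rule, so that $\mathbf D_s^i[b(X_t^i)] = \sigma b'(X_t^i)\mathbf 1_{[0,t]}(s)\exp(\theta_0\int_s^t b'(X_u^i)du)$. Because $b' \leqslant 0$ and $\theta_0 > 0$, the exponential is bounded by $1$, so the second kernel in (\ref{isometry_divergence}) contributes at most $\alpha_H^2\sigma^2 \int\!\!\int_0^v\!\!\int_0^u|u-u'|^{2H-2}|v-v'|^{2H-2}b'(X_v^1)b'(X_u^1)\,du'dv'dudv$. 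The first kernel is bounded above by the absolute-value integrand $\alpha_H\int\!\!\int|b(X_s^1)||b(X_t^1)||t-s|^{2H-2}dsdt$. In short, $\mathrm{var}(Z^1) \leqslant \mathbb E(\widetilde Z^1)$ where $\widetilde Z^1$ is the single-copy version of the summand defining $\overline Y_N$. The LLN then gives $\overline Y_N \xrightarrow{\mathbb P} Y \geqslant \mathrm{var}(Z^1)$.

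Combining Slutsky with this upper bound, the ratio $\tfrac{\sqrt N D_N}{\overline Y_N^{1/2}}(\widehat\theta_N -\theta_0)$ converges in distribution to $\mathcal N(0, \mathrm{var}(Z^1)/Y)$, whose variance is $\leqslant 1$. Consequently $\lim \mathbb P(|\tfrac{\sqrt N D_N}{\overline Y_N^{1/2}}(\widehat\theta_N -\theta_0)| \leqslant x) \geqslant 2\phi(x) - 1$ for every $x > 0$; evaluating at $x = u_{1-\alpha/4}$ gives $2(1-\alpha/4)-1 = 1-\alpha/2$, which combined with the first paragraph yields the announced coverage. The main obstacle is the variance comparison of this third paragraph: one must carefully unfold (\ref{isometry_divergence}) applied to $b(X^1)\mathbf 1_{[0,T]}$, verify the integrability of each term using Proposition \ref{bound_variance_Skorokhod}, and exploit the sign of $b'$ together with $\theta_0 > 0$ to discard the exponential factors so that the resulting upper bound coincides with $\mathbb E(\widetilde Z^1)$.
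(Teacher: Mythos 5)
Your proposal is correct and follows essentially the same route as the paper: a CLT plus Slutsky for the auxiliary estimator $\widehat\theta_N$, the isometry (\ref{isometry_divergence}) combined with $b'\leqslant 0$ and $\theta_0>0$ to bound the asymptotic variance by the limit of $\overline Y_N$, the contraction estimate $|\widetilde\theta_N-\widehat\theta_N|\leqslant|\widehat\theta_N-\theta_0|$ on $\Omega_N$ for $\mathfrak c=1/2$, and $\mathbb P(\Omega_{N}^{c})\rightarrow 0$. The only (harmless) difference is bookkeeping: by absorbing the transfer step into the single event $\{|\widetilde\theta_N-\theta_0|\leqslant 2|\widehat\theta_N-\theta_0|\}$ rather than splitting via the triangle inequality as the paper does, you obtain asymptotic coverage at least $1-\alpha/2$, slightly sharper than the paper's $1-\alpha$.
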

%

% Proof.

%
\begin{proof}
The proof of Proposition \ref{ACI_fBm} is dissected in three steps.
\\
\\
{\bf Step 1.} Consider
\begin{displaymath}
U_N :=\frac{1}{N}\sum_{i = 1}^{N}Z^i,
\end{displaymath}
where $Z^1,\dots,Z^N$ are defined by
\begin{displaymath}
Z^i :=\frac{\sigma}{T}\int_{0}^{T}b(X_{s}^{i})\delta B_{s}^{i}
\textrm{ $;$ }\forall i\in\{1,\dots,N\}.
\end{displaymath}
First,
\begin{displaymath}
D_N =\frac{1}{NT}\sum_{i = 1}^{N}\int_{0}^{T}b(X_{s}^{i})^2ds
\xrightarrow[N\rightarrow\infty]{\mathbb P}
\mathbb E\left(\frac{1}{T}\int_{0}^{T}b(X_{s}^{1})^2ds\right) =\|b\|_{f}^{2} > 0
\end{displaymath}
by the (usual) law of large numbers, and
\begin{displaymath}
\sqrt NU_N\xrightarrow[N\rightarrow\infty]{\mathcal D}\mathcal N(0,{\rm var}(Z^1))
\end{displaymath}
by the (usual) central limit theorem. Then, since $dX_{t}^{i} =\theta_0b(X_{t}^{i})dt +\sigma dB_{t}^{i}$ for every $i\in\{1,\dots,N\}$, and by Slutsky's lemma,
\begin{displaymath}
\sqrt N(\widehat\theta_N -\theta_0) =
\sqrt N\frac{U_N}{D_N}
\xrightarrow[N\rightarrow\infty]{\mathcal D}
\mathcal N\left(0,\frac{{\rm var}(Z^1)}{\|b\|_{f}^{4}}\right).
\end{displaymath}
Now, consider
\begin{eqnarray*}
 \overline Y_{N}^{*} & := &
 \frac{\sigma^2}{NT^2}
 \sum_{i = 1}^{N}\left[
 \alpha_H\int_{[0,T]^2}b(X_{s}^{i})b(X_{t}^{i})|t - s|^{2H - 2}dsdt\right.\\
 & &
 \hspace{1.5cm}
 +\alpha_{H}^{2}\sigma^2
 \int_{[0,T]^2}\int_{0}^{v}\int_{0}^{u}|u - u'|^{2H - 2}|v - v'|^{2H - 2}\\
 & &
 \hspace{3cm}\left.\times
 b'(X_{v}^{i})b'(X_{u}^{i})
 \exp\left(\theta_0\left(\int_{u'}^{v}b'(X_{s}^{i})ds +\int_{v'}^{u}b'(X_{s}^{i})ds\right)\right)
 du'dv'dudv\right].
\end{eqnarray*}
By the law of large numbers, and by (\ref{isometry_divergence}),
\begin{eqnarray*}
 \overline Y_{N}^{*}
 & \xrightarrow[N\rightarrow\infty]{\mathbb P} &
 \frac{\sigma^2}{T^2}\mathbb E\left[
 \alpha_H\int_{[0,T]^2}b(X_{s}^{1})b(X_{t}^{1})|t - s|^{2H - 2}dsdt\right.\\
 & &
 \hspace{0.5cm}
 +\alpha_{H}^{2}\sigma^2
 \int_{[0,T]^2}\int_{0}^{v}\int_{0}^{u}|u - u'|^{2H - 2}|v - v'|^{2H - 2}\\
 & &
 \hspace{1cm}\left.\times
 b'(X_{v}^{1})b'(X_{u}^{1})
 \exp\left(\theta_0\left(\int_{u'}^{v}b'(X_{s}^{1})ds +\int_{v'}^{u}b'(X_{s}^{1})ds\right)\right)
 du'dv'dudv\right] = {\rm var}(Z^1).
\end{eqnarray*}
Therefore,
\begin{displaymath}
\frac{\sqrt ND_N}{|\overline Y_{N}^{*}|^{1/2}}(\widehat\theta_N -\theta_0)
\xrightarrow[N\rightarrow\infty]{\mathcal D}\mathcal N(0,1)
\end{displaymath}
by Slutsky's lemma, and then
\begin{displaymath}
\mathbb P\left(\frac{\sqrt ND_N}{|\overline Y_{N}^{*}|^{1/2}}|\widehat\theta_N -\theta_0| > x\right)
\xrightarrow[N\rightarrow\infty]{} 1 - (2\phi(x) - 1) = 2(1 -\phi(x))
\end{displaymath}
for every $x\in\mathbb R_+$.
\\
\\
{\bf Step 2.} Consider
\begin{displaymath}
\widehat\alpha_N :=\frac{\sqrt ND_N}{\overline Y_{N}^{1/2}}
\quad {\rm and}\quad
\widehat\alpha_{N}^{*} :=\frac{\sqrt ND_N}{|\overline Y_{N}^{*}|^{1/2}}.
\end{displaymath}
Since $b'(.)\leqslant 0$ and $\theta_0 > 0$, $\overline Y_{N}^{*}\leqslant\overline Y_N$, and then $\widehat\alpha_{N}^{*}\geqslant\widehat\alpha_N$. Moreover, as established in the proof of Proposition \ref{consistency_computable_estimator},
\begin{displaymath}
|\widetilde\theta_N -\widehat\theta_N|\leqslant
\frac{\mathfrak c}{1 -\mathfrak c}|\widehat\theta_N -\theta_0|
\quad {\rm on}\quad\Omega_N.
\end{displaymath}
So, by taking $\mathfrak c = 1/2$, for any $x\in\mathbb R_+$,
\begin{eqnarray*}
 \mathbb P(\widehat\alpha_N|\widetilde\theta_{N}^{\mathfrak c} -\theta_0| > 2x)
 & \leqslant &
 \mathbb P(\widehat\alpha_{N}^{*}|\widetilde\theta_{N}^{\mathfrak c} -\theta_0| > 2x)\\
 & \leqslant &
 \mathbb P(\widehat\alpha_{N}^{*}|\widetilde\theta_{N}^{\mathfrak c} -\widehat\theta_N| > x) +
 \mathbb P(\widehat\alpha_{N}^{*}|\widehat\theta_N -\theta_0| > x)\\
 & \leqslant &
 \mathbb P(\Omega_{N}^{c}) +
 \mathbb P(\{\widehat\alpha_{N}^{*}|\widehat\theta_N -
 \theta_0| > x\}\cap\Omega_N) +
 \mathbb P(\widehat\alpha_{N}^{*}|\widehat\theta_N -\theta_0| > x)\\
 & \leqslant &
 \mathbb P(\Omega_{N}^{c}) +
 2\mathbb P(\widehat\alpha_{N}^{*}|\widehat\theta_N -\theta_0| > x).
\end{eqnarray*}
Therefore,
\begin{displaymath}
\mathbb P(\widehat\alpha_N|\widetilde\theta_{N}^{\mathfrak c} -\theta_0|\leqslant 2x)
\geqslant 1 -\mathbb P(\Omega_{N}^{c}) -
2\mathbb P(\widehat\alpha_{N}^{*}|\widehat\theta_N -\theta_0| > x).
\end{displaymath}
{\bf Step 3 (conclusion).} By the two previous steps and since, as established in Proposition \ref{consistency_computable_estimator},
\begin{displaymath}
\lim_{N\rightarrow\infty}
\mathbb P(\Omega_{N}^{c}) = 0
\quad\textrm{under condition (\ref{consistency_computable_estimator_1})};
\end{displaymath}
for every $x\in\mathbb R_+$,
\begin{displaymath}
\lim_{N\rightarrow\infty}
\mathbb P(\widehat\alpha_N|\widetilde\theta_{N}^{\mathfrak c} -\theta_0|\leqslant 2x)
\geqslant 1 - 4(1 -\phi(x)) = 4\phi(x) - 3.
\end{displaymath}
So, for every $\alpha\in (0,1)$,
\begin{displaymath}
\lim_{N\rightarrow\infty}
\mathbb P(\widehat\alpha_N|\widetilde\theta_{N}^{\mathfrak c} -\theta_0|\leqslant 2u_{1 -\frac{\alpha}{4}})
\geqslant
4\phi(u_{1 -\frac{\alpha}{4}}) - 3 = 1 -\alpha.
\end{displaymath}
\end{proof}
\noindent
Now, even when $\mathcal R_N\neq\emptyset$, the following proposition provides a suitable risk bound on the truncated estimator
\begin{displaymath}
\widetilde\theta_{N}^{\mathfrak c,\mathfrak d} :=
\widetilde\theta_{N}^{\mathfrak c}\mathbf 1_{D_N\geqslant\mathfrak d}
\quad {\rm with}\quad
\mathfrak d\in\left(0,\frac{\|b\|_{f}^{2}}{2}\right].
\end{displaymath}
%

% Proposition : Risk bound on the computable estimator.

%
\begin{proposition}\label{risk_bound_computable_estimator}
Assume that $b'(.)\leqslant 0$, $\theta_0 > 0$ and that
\begin{equation}\label{risk_bound_computable_estimator_1}
\frac{2T^{2H}}{\|b\|_{f}^{2}}\exp\left(\frac{2\|b'\|_{\infty}}{\|b\|_{f}^{2}}
|\mathbb E({\tt b}(X_{T}^{1})) - {\tt b}(x_0)|\right)
<\frac{\mathfrak c}{\overline\alpha_H\sigma^2
\|b'\|_{\infty}^{2}}.
\end{equation}
Then, there exists a constant $\mathfrak c_{\ref{risk_bound_computable_estimator}} > 0$, not depending on $N$ and $\mathfrak d$, such that
\begin{displaymath}
\mathbb E[(\widetilde\theta_{N}^{\mathfrak c,\mathfrak d} -\theta_0)^2]
\leqslant
\frac{\mathfrak c_{\ref{risk_bound_computable_estimator}}}{\mathfrak d^2N}
\left(1 +\frac{|\mathcal R_N|}{N}\right).
\end{displaymath}
Moreover, if $|\mathcal R_N| = o(N^2)$, then both the truncated estimator $\widetilde\theta_{N}^{\mathfrak c,\mathfrak d}$ and the main estimator $\widetilde\theta_{N}^{\mathfrak c}$ are consistent.
\end{proposition}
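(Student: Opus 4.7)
The overall strategy mirrors the two-step approach used for Propositions \ref{risk_bound_BM} and \ref{risk_bound_fBm}, but with the extra ingredient that on the event $\Omega_N$ the fixed-point estimator $\widetilde\theta_N$ is Lipschitz-close to the auxiliary estimator $\widehat\theta_N$.

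\textbf{Step 1 (risk decomposition).} I would begin by writing $\widetilde\theta_N^{\mathfrak c,\mathfrak d} =\widetilde\theta_N\mathbf 1_{\Omega_N\cap\{D_N\geqslant\mathfrak d\}}$ so that
\begin{displaymath}
\mathbb E[(\widetilde\theta_{N}^{\mathfrak c,\mathfrak d}-\theta_0)^2]
= \mathbb E[(\widetilde\theta_N-\theta_0)^2\mathbf 1_{\Omega_N\cap\{D_N\geqslant\mathfrak d\}}]
+ \theta_0^2\,\mathbb P(\Omega_N^c\cup\{D_N<\mathfrak d\}).
\end{displaymath}
On $\Omega_N$, the contraction estimate $|\widetilde\theta_N-\theta_0|\mathbf 1_{\Omega_N}\leqslant\frac{1}{1-\mathfrak c}|\widehat\theta_N-\theta_0|\mathbf 1_{\Omega_N}$ from the proof of Proposition \ref{consistency_computable_estimator} and the trivial bound $(\widehat\theta_N-\theta_0)^2\mathbf 1_{D_N\geqslant\mathfrak d}\leqslant(\widehat\theta_N^{\mathfrak d}-\theta_0)^2$ reduce the first summand to $(1-\mathfrak c)^{-2}\mathbb E[(\widehat\theta_N^{\mathfrak d}-\theta_0)^2]$, which is bounded at the desired rate $N^{-1}(1+|\mathcal R_N|/N)$ by Proposition \ref{risk_bound_fBm}. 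The truncation probability $\mathbb P(D_N<\mathfrak d)$ is handled via $\mathfrak d\leqslant\|b\|_f^2/2$ together with Chebyshev and the variance estimate $\mathbb E[(D_N-\|b\|_f^2)^2]\leqslant\frac{\|b^2\|_f^2}{N}(1+|\mathcal R_N|/N)$ already established in the proof of Proposition \ref{risk_bound_BM}.

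\textbf{Step 2 (control of $\mathbb P(\Omega_N^c)$).} Writing $A_N:=\frac{1}{N}\sum_{i=1}^{N}({\tt b}(X_T^i)-{\tt b}(x_0))$ and $a:=\mathbb E({\tt b}(X_T^1))-{\tt b}(x_0)$, one has $\|b'\|_\infty|I_N|T=\|b'\|_\infty|A_N|/D_N$, hence
\begin{displaymath}
T^{2H}\frac{M_N}{D_N}\mathbf 1_{\{D_N\geqslant\|b\|_f^2/2\}}
\leqslant\frac{2T^{2H}}{\|b\|_f^2}\exp\!\left(\frac{2\|b'\|_\infty|A_N|}{\|b\|_f^2}\right).
\end{displaymath}
The strict inequality in (\ref{risk_bound_computable_estimator_1}) allows to pick $\epsilon>0$ such that replacing $|a|$ by $|a|+\epsilon$ in its left-hand side still yields a value $\leqslant\mathfrak c/(\overline\alpha_H\sigma^2\|b'\|_\infty^2)$. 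Consequently $\{D_N\geqslant\|b\|_f^2/2\}\cap\{|A_N|\leqslant|a|+\epsilon\}\subset\Omega_N$, so that
\begin{displaymath}
\mathbb P(\Omega_N^c)\leqslant\mathbb P(|D_N-\|b\|_f^2|>\|b\|_f^2/2)+\mathbb P(|A_N-a|>\epsilon).
\end{displaymath}
A variance computation for $A_N$ identical in structure to the one for $D_N$ in Proposition \ref{risk_bound_BM} (the Gaussian-type bound (\ref{Gaussian_bound_density_fBm}) together with the quadratic growth of ${\tt b}$ ensures $\mathbb E({\tt b}(X_T^1)^2)<\infty$) then yields $\mathbb P(\Omega_N^c)\leqslant\frac{\mathfrak c'}{N}(1+|\mathcal R_N|/N)$, and combining with Step 1 produces the announced risk bound.

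\textbf{Step 3 (consistency).} Assuming $|\mathcal R_N|=o(N^2)$, the truncated estimator $\widetilde\theta_N^{\mathfrak c,\mathfrak d}$ is consistent directly by the above $\mathbb L^2$-bound. For $\widetilde\theta_N^{\mathfrak c}$, I would follow the second step of the proof of Proposition \ref{risk_bound_BM} verbatim: since $\widetilde\theta_N^{\mathfrak c}=\widetilde\theta_N^{\mathfrak c,\mathfrak d}$ on $\{D_N\geqslant\mathfrak d\}$, one splits $\mathbb P(|\widetilde\theta_N^{\mathfrak c}-\theta_0|>\varepsilon)\leqslant\mathbb P(D_N<\mathfrak d)+\mathbb P(|\widetilde\theta_N^{\mathfrak c,\mathfrak d}-\theta_0|>\varepsilon)$ and applies Markov's inequality to the second term.

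\textbf{Main obstacle.} The delicate part is bounding $\mathbb P(\Omega_N^c)$ at the parametric rate $N^{-1}(1+|\mathcal R_N|/N)$ rather than merely showing it tends to zero, as in Proposition \ref{consistency_computable_estimator}. The factor of $2$ appearing both outside and inside the exponential in (\ref{risk_bound_computable_estimator_1}) is precisely what opens a deterministic neighborhood around $(\|b\|_f^2,|a|)$ on which $\Omega_N$ holds, allowing the reduction to two Chebyshev bounds of the correct order; without this strengthening of (\ref{consistency_computable_estimator_1}), only a qualitative decay of $\mathbb P(\Omega_N^c)$ would be available.
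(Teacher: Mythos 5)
Your proposal is correct and follows essentially the same route as the paper: reduce to the truncated auxiliary estimator via the contraction bound $|\widetilde\theta_N-\theta_0|\mathbf 1_{\Omega_N}\leqslant(1-\mathfrak c)^{-1}|\widehat\theta_N-\theta_0|\mathbf 1_{\Omega_N}$, invoke Proposition \ref{risk_bound_fBm}, and control $\mathbb P(\Omega_{N}^{c})$ at rate $N^{-1}(1+|\mathcal R_N|/N)$ by two Chebyshev bounds, one on $D_N$ and one on the empirical mean of ${\tt b}(X_{T}^{i})-{\tt b}(x_0)$, with the strict inequality in (\ref{risk_bound_computable_estimator_1}) supplying the positive margin. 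Your $\epsilon$-neighborhood formulation is just a rephrasing of the paper's explicit margin $\mathfrak u>0$ obtained after taking logarithms, so the two arguments coincide.
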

%

% Proof.

%
\begin{proof}
First, as established in the proof of Proposition \ref{consistency_computable_estimator},
\begin{displaymath}
|\widetilde\theta_N -\theta_0|
\leqslant
\frac{1}{1 -\mathfrak c}
|\widehat\theta_N -\theta_0|
\quad {\rm on}\quad
\Omega_N.
\end{displaymath}
Then, since $\mathfrak c\in (0,1)$,
\begin{eqnarray*}
 |\widetilde\theta_{N}^{\mathfrak c,\mathfrak d} -\theta_0|
 & = &
 |\widetilde\theta_N -\theta_0|\mathbf 1_{\{D_N\geqslant\mathfrak d\}\cap\Omega_N} +
 |\theta_0|\mathbf 1_{(\{D_N\geqslant\mathfrak d\}\cap\Omega_N)^c}\\
 & \leqslant &
 (1 -\mathfrak c)^{-1}
 |\widehat\theta_N -\theta_0|\mathbf 1_{\{D_N\geqslant\mathfrak d\}\cap\Omega_N} +
 |\theta_0|(\mathbf 1_{D_N <\mathfrak d} +\mathbf 1_{\Omega_{N}^{c}})\\
 & \leqslant &
 (1 -\mathfrak c)^{-1}
 (|\widehat\theta_N -\theta_0|\mathbf 1_{D_N\geqslant\mathfrak d} +
 |\theta_0|\mathbf 1_{D_N <\mathfrak d}) +
 |\theta_0|\mathbf 1_{\Omega_{N}^{c}}\\
 & = &
 (1 -\mathfrak c)^{-1}
 |\widehat\theta_{N}^{\mathfrak d} -\theta_0| +
 |\theta_0|\mathbf 1_{\Omega_{N}^{c}}.
\end{eqnarray*}
So, by Proposition \ref{risk_bound_fBm},
\begin{displaymath}
\mathbb E[(\widetilde\theta_{N}^{\mathfrak c,\mathfrak d} -\theta_0)^2]
\leqslant
\frac{2}{(1 -\mathfrak c)^2}\cdot
\frac{\mathfrak c_{\ref{risk_bound_fBm}}}{N}\left(1 +\frac{|\mathcal R_N|}{N}\right) +
2\theta_{0}^{2}\mathbb P(\Omega_{N}^{c}).
\end{displaymath}
Now, let us show that $\mathbb P(\Omega_{N}^{c})$ is of order $1/N(1 + |\mathcal R_N|/N)^{-1}$. Consider $\mathfrak d_f :=\|b\|_{f}^{2}/2$,
\begin{displaymath}
\mathfrak c_1 :=
\frac{\mathfrak c}{\overline\alpha_HT^{2H}
\sigma^2\|b'\|_{\infty}^{2}},
\end{displaymath}
and note that
\begin{eqnarray*}
 \mathbb P(\Omega_{N}^{c}) & = &
 \mathbb P\left(\frac{M_N}{D_N} >\mathfrak c_1,D_N\geqslant\mathfrak d_f\right) +
 \mathbb P\left(\frac{M_N}{D_N} >\mathfrak c_1,D_N <\mathfrak d_f\right)\\
 & \leqslant &
 \mathbb P\left(\frac{M_N}{\mathfrak d_f} >\mathfrak c_1,D_N\geqslant\mathfrak d_f\right) +
 \mathbb P(D_N <\mathfrak d_f).
\end{eqnarray*}
On the one hand, as established in the proof of Proposition \ref{risk_bound_BM},
\begin{eqnarray*}
 \mathbb P(D_N <\mathfrak d_f)
 & \leqslant &
 \mathbb P\left(|D_N -\|b\|_{f}^{2}| >\frac{\|b\|_{f}^{2}}{2}\right)\\
 & \leqslant &
 \frac{1}{\mathfrak d_{f}^{2}}\mathbb E[(D_N -\|b\|_{f}^{2})^2]
 \leqslant\frac{\|b^2\|_{f}^{2}}{\mathfrak d_{f}^{2}N}\left(1 +\frac{|\mathcal R_N|}{N}\right).
\end{eqnarray*}
On the other hand,
\begin{eqnarray*}
 \mathbb P\left(\frac{M_N}{\mathfrak d_f} >\mathfrak c_1,D_N\geqslant\mathfrak d_f\right)
 & = &
 \mathbb P\left[
 \frac{1}{N}\left|\sum_{i = 1}^{N}({\tt b}(X_{T}^{i}) - {\tt b}(x_0))\right| >
 \log\left(\frac{\mathfrak c}{\overline\alpha_HT^{2H}
 \sigma^2\|b'\|_{\infty}^{2}}\mathfrak d_f\right)\frac{D_N}{\|b'\|_{\infty}},
 D_N\geqslant\mathfrak d_f\right]\\
 & \leqslant &
 \mathbb P\left[
 \frac{1}{N}\left|\sum_{i = 1}^{N}({\tt b}(X_{T}^{i}) - {\tt b}(x_0))\right| >
 \log\left(\frac{\mathfrak c}{\overline\alpha_HT^{2H}
 \sigma^2\|b'\|_{\infty}^{2}}\mathfrak d_f\right)\frac{\mathfrak d_f}{\|b'\|_{\infty}}\right]\\
 & \leqslant &
 \mathbb P\left[
 \frac{1}{N}\left|\sum_{i = 1}^{N}[
 {\tt b}(X_{T}^{i}) - {\tt b}(x_0) -
 (\mathbb E({\tt b}(X_{T}^{i})) - {\tt b}(x_0))]\right| >\mathfrak u\right]
\end{eqnarray*}
with
\begin{displaymath}
\mathfrak u =
\log\left(\frac{\mathfrak c}{\overline\alpha_HT^{2H}
\sigma^2\|b'\|_{\infty}^{2}}\mathfrak d_f\right)\frac{\mathfrak d_f}{\|b'\|_{\infty}} -
|\mathbb E({\tt b}(X_{T}^{1})) - {\tt b}(x_0)| > 0
\quad {\rm by}\quad (\ref{risk_bound_computable_estimator_1}).
\end{displaymath}
So, by the Bienaym\'e-Tchebychev inequality,
\begin{eqnarray*}
 \mathbb P\left(\frac{M_N}{\mathfrak d_f} >\mathfrak c_1,D_N\geqslant\mathfrak d_f\right)
 & \leqslant &
 \frac{1}{\mathfrak u^2N^2}{\rm var}\left(\sum_{i = 1}^{N}
 ({\tt b}(X_{T}^{i}) - {\tt b}(x_0))\right)\\
 & = &
 \frac{1}{\mathfrak u^2N}{\rm var}(\texttt b(X_{T}^{1}) -\texttt b(x_0))\\
 & &
 \hspace{2cm} +
 \frac{1}{\mathfrak u^2N^2}\sum_{(i,k)\in\mathcal R_N}
 {\rm cov}(\texttt b(X_{T}^{i}) -\texttt b(x_0),
 \texttt b(X_{T}^{k}) -\texttt b(x_0))\\
 & \leqslant &
 \frac{1}{\mathfrak u^2N}\left(1 +\frac{|\mathcal R_N|}{N}\right)
 \mathbb E[({\tt b}(X_{T}^{1}) - {\tt b}(x_0))^2].
\end{eqnarray*}
Therefore,
\begin{displaymath}
\mathbb E[(\widetilde\theta_{N}^{\mathfrak c,\mathfrak d} -\theta_0)^2]
\leqslant
\left[\frac{\mathfrak c_{\ref{risk_bound_fBm}}}{(1 -\mathfrak c)^2} +
\theta_{0}^{2}\left(\frac{\|b^2\|_{f}^{2}}{\mathfrak d_{f}^{2}} +
\frac{1}{\mathfrak u^2}
\mathbb E[({\tt b}(X_{T}^{1}) - {\tt b}(x_0))^2]\right)\right]
\frac{2}{N}\left(1 +\frac{|\mathcal R_N|}{N}\right).
\end{displaymath}
Finally, assume that $|\mathcal R_N| = o(N^2)$. By following the same line than in the second step of the proof of Proposition \ref{risk_bound_BM}, thanks to the bounds on
\begin{displaymath}
\mathbb E[(D_N -\|b\|_{f}^{2})^2]
\quad {\rm and}\quad
\mathbb E[(\widetilde\theta_{N}^{\mathfrak c,\mathfrak d} -\theta_0)^2]
\end{displaymath}
previously established, $\widetilde\theta_{N}^{\mathfrak c,\mathfrak d}$ and $\widetilde\theta_{N}^{\mathfrak c}$ are consistent.
\end{proof}
%

% Remark : How to choose the threshold in practice (H > 1/2) (part 2).

%
\begin{remark}\label{remark_threshold_fBm_part_2}
Let us make some remarks about Proposition \ref{risk_bound_computable_estimator}:
\begin{enumerate}
 \item Note that since $\|b\|_{f}^{2}/2 <\|b\|_{f}^{2}$, if (\ref{risk_bound_computable_estimator_1}) is fulfilled, then (\ref{consistency_computable_estimator_1}) is fulfilled too. Moreover, Remark \ref{remark_condition_T} may be easily adapted from (\ref{consistency_computable_estimator_1}) to (\ref{risk_bound_computable_estimator_1}).
 \item As for $H = 1/2$, by (the second part of) Proposition \ref{risk_bound_computable_estimator}, $\widetilde\theta_{N}^{\mathfrak c}$ and $\widetilde\theta_{N}^{\mathfrak c,\mathfrak d}$ are both consistent, which means that for large values of $N$, one should always consider our main estimator $\widetilde\theta_{N}^{\mathfrak c}$, especially when $\mathcal R_N =\emptyset$ because Proposition \ref{ACI_fBm} provides (in addition) an asymptotic confidence interval for $\widetilde\theta_{N}^{\mathfrak c}$. In this last situation, as usual, $N\geqslant 30$ may be considered as large. However, there are non-asymptotic theoretical guarantees on the truncated estimator $\widetilde\theta_{N}^{\mathfrak c,\mathfrak d}$ thanks to the risk bound provided in (the first part of) Proposition \ref{risk_bound_fBm}, but there are no such guarantees on $\widetilde\theta_{N}^{\mathfrak c}$. So, as for $H = 1/2$, in the non-asymptotic framework, the following rule should be observed to choose the threshold $\mathfrak d$ in $\Delta_f$: in order to degrade as few as possible the theoretical guarantees on $\widetilde\theta_{N}^{\mathfrak c,\mathfrak d}$ provided by the risk bound in Proposition \ref{risk_bound_computable_estimator}, one should take $\mathfrak d =\mathfrak d_{\max}$, where $\mathfrak d_{\max}$ is the highest (easily) computable lower-bound on $\|b\|_{f}^{2}/2$. This doesn't mean that smaller values of $\mathfrak d$ cannot give better numerical results, but then theoretical guarantees of Proposition \ref{risk_bound_computable_estimator} are degraded. See Remark \ref{remark_threshold_fBm_part_1} on how to find a computable lower-bound $\mathfrak d_{\max}$ on $\|b\|_{f}^{2}/2$ in some situations.
\end{enumerate}
\end{remark}
\noindent
Finally, let us consider the estimators
\begin{displaymath}
\widetilde\theta_{N,n}^{\mathfrak c} :=\widetilde\theta_{N,n}\mathbf 1_{D_N\geqslant\mathfrak d}
\quad {\rm and}\quad
\widetilde\theta_{N,n}^{\mathfrak c,\mathfrak d} :=
\widetilde\theta_{N,n}\mathbf 1_{\{D_N\geqslant\mathfrak d\}\cap\Omega_N}
\quad\textrm{$;$}\quad n\in\mathbb N,
\end{displaymath}
where $\widetilde\theta_{N,n} := I_N + R_{N,n}$, and the sequence $(R_{N,n})_{n\in\mathbb N}$ is defined by $R_{N,0} = 0$ and
\begin{displaymath}
R_{N,n + 1} =\Phi_N(R_{N,n})
\textrm{ $;$ }n\in\mathbb N.
\end{displaymath}
%

% Proposition : Convergence of the approximate estimator.

%
\begin{proposition}\label{convergence_approximate_estimator}
Let $\psi :\mathbb N\rightarrow\mathbb N$ be a map satisfying
\begin{displaymath}
\psi(.)\geqslant
-\frac{\log(\mathfrak m_{\ref{convergence_approximate_estimator}}
\sqrt .)}{\log(\mathfrak c)}
\quad\textrm{with}\quad
\mathfrak m_{\ref{convergence_approximate_estimator}} =
\frac{\mathfrak c(1 -\mathfrak c)^{-1}}{2T\overline\alpha_H\|b'\|_{\infty}}.
\end{displaymath}
\begin{enumerate}
 \item If $\mathcal R_N =\emptyset$ and $T$ satisfies (\ref{consistency_computable_estimator_1}), then
 \begin{displaymath}
 \widetilde\theta_{N,\psi(N)}^{\mathfrak c}
 \xrightarrow[N\rightarrow\infty]{\mathbb P}\theta_0.
 \end{displaymath}
 \item Assume that $T$ satisfies (\ref{risk_bound_computable_estimator_1}). Then, there exists a constant $\mathfrak c_{\ref{convergence_approximate_estimator}} > 0$, not depending on $N$ and $\mathfrak d$, such that
 \begin{displaymath}
 \mathbb E[(\widetilde\theta_{N,\psi(N)}^{\mathfrak c,\mathfrak d} -\theta_0)^2]
 \leqslant
 \frac{\mathfrak c_{\ref{convergence_approximate_estimator}}}{\mathfrak d^2N}
 \left(1 +\frac{|\mathcal R_N|}{N}\right).
 \end{displaymath}
 Moreover, if $|\mathcal R_N| = o(N^2)$, then both the truncated estimator $\widetilde\theta_{N,\psi(N)}^{\mathfrak c,\mathfrak d}$ and the main estimator $\widetilde\theta_{N,\psi(N)}^{\mathfrak c}$ are consistent.
\end{enumerate}
\end{proposition}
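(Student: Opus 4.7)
The strategy is to control the discretization error $|\widetilde\theta_{N,\psi(N)} -\widetilde\theta_N|$ on the event $\Omega_N$ at the parametric rate $N^{-1/2}$, and then to reduce both statements to Propositions \ref{consistency_computable_estimator} and \ref{risk_bound_computable_estimator}. Since, by Proposition \ref{existence_uniqueness_R_N}, $\Phi_N$ is on $\Omega_N$ a $\mathfrak{c}$-contraction with (unique) fixed point $R_N$, and since the Picard sequence starts at $R_{N,0} = 0$, the classical geometric estimate gives
\begin{displaymath}
|\widetilde\theta_{N,n} -\widetilde\theta_N|\mathbf 1_{\Omega_N} =
|R_{N,n} - R_N|\mathbf 1_{\Omega_N}
\leqslant
\frac{\mathfrak{c}^n}{1 -\mathfrak{c}}|\Phi_N(0)|\mathbf 1_{\Omega_N}.
\end{displaymath}
So the first step is to bound $|\Phi_N(0)|$ on $\Omega_N$.

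I would combine $b'(\cdot)\leqslant 0$ with the elementary inequality $|\exp(I_N\int_s^t b'(X^i_u)du)|\leqslant M_N$ and the explicit computation $\int_0^T\int_0^t (t-s)^{2H-2}dsdt = T^{2H}/(2H(2H-1))$ to get
\begin{displaymath}
|\Phi_N(0)|\leqslant\frac{\sigma^2\|b'\|_\infty T^{2H-1}M_N}{2D_N}.
\end{displaymath}
Inserting the defining inequality $T^{2H}M_N/D_N \leqslant \mathfrak{c}/(\overline\alpha_H\sigma^2\|b'\|_\infty^2)$ of $\Omega_N$ yields the clean bound $|\Phi_N(0)|\mathbf 1_{\Omega_N}\leqslant\mathfrak{c}/(2T\overline\alpha_H\|b'\|_\infty)$, hence $|\widetilde\theta_{N,n} -\widetilde\theta_N|\mathbf 1_{\Omega_N}\leqslant\mathfrak{c}^n\mathfrak{m}_{\ref{convergence_approximate_estimator}}$. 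The lower bound prescribed for $\psi$ is precisely the one guaranteeing $\mathfrak{c}^{\psi(N)}\mathfrak{m}_{\ref{convergence_approximate_estimator}}\leqslant N^{-1/2}$, so that $|\widetilde\theta_{N,\psi(N)} -\widetilde\theta_N|\mathbf 1_{\Omega_N}\leqslant N^{-1/2}$.

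For part (1), this uniform bound combined with the triangle inequality gives $|\widetilde\theta_{N,\psi(N)}^{\mathfrak{c}} -\theta_0| \leqslant N^{-1/2} + |\widetilde\theta_N^{\mathfrak{c}} -\theta_0| + |\theta_0|\mathbf 1_{\Omega_N^c}$, and I would conclude using Proposition \ref{consistency_computable_estimator} together with $\mathbb P(\Omega_N^c) \to 0$ (established in its proof). For part (2), since $\{D_N\geqslant\mathfrak{d}\}\cap\Omega_N \subset \Omega_N$, the same estimate gives $|\widetilde\theta_{N,\psi(N)}^{\mathfrak{c},\mathfrak{d}} -\widetilde\theta_N^{\mathfrak{c},\mathfrak{d}}|\leqslant N^{-1/2}$, and
\begin{displaymath}
\mathbb E[(\widetilde\theta_{N,\psi(N)}^{\mathfrak{c},\mathfrak{d}} -\theta_0)^2]
\leqslant
\frac{2}{N} + 2\mathbb E[(\widetilde\theta_N^{\mathfrak{c},\mathfrak{d}} -\theta_0)^2],
\end{displaymath}
where the second term is controlled by Proposition \ref{risk_bound_computable_estimator}. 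The consistency of the main estimator $\widetilde\theta_{N,\psi(N)}^{\mathfrak{c}}$ under $|\mathcal R_N| = o(N^2)$ then follows from the truncation/Markov decomposition of Step~2 in the proof of Proposition \ref{risk_bound_BM}, applied to the above $\mathbb L^2$-bound on $\widetilde\theta_{N,\psi(N)}^{\mathfrak{c},\mathfrak{d}}$ and to $\mathbb P(D_N <\mathfrak d)\leqslant \|b^2\|_f^2\mathfrak d^{-2}N^{-1}(1 + |\mathcal R_N|/N)$.

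The delicate point is pinning down the constant in the bound on $|\Phi_N(0)|$ so that it matches $\mathfrak{m}_{\ref{convergence_approximate_estimator}}$ exactly; this is what forces $\psi(N)$ to grow logarithmically in $N$ and guarantees that the discretization error does not degrade the parametric rate inherited from Proposition \ref{risk_bound_computable_estimator}.
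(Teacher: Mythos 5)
Your proposal is correct and follows essentially the same route as the paper's own proof: the geometric Picard estimate $|R_{N,n}-R_N|\mathbf 1_{\Omega_N}\leqslant\frac{\mathfrak c^n}{1-\mathfrak c}|\Phi_N(0)|\mathbf 1_{\Omega_N}$, the bound $|\Phi_N(0)|\mathbf 1_{\Omega_N}\leqslant\mathfrak c/(2T\overline\alpha_H\|b'\|_{\infty})$ obtained from the explicit double integral and the defining inequality of $\Omega_N$, the resulting $N^{-1/2}$ discretization error under the stated condition on $\psi$, and the reduction to Propositions \ref{consistency_computable_estimator} and \ref{risk_bound_computable_estimator} (plus the Step~2 argument of Proposition \ref{risk_bound_BM} for consistency). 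The constants and the logic match the paper's argument exactly.
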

%

% Proof.

%
\begin{proof}
On the event $\Omega_N$, note that
\begin{eqnarray*}
 |\Phi_N(0)| & \leqslant &
 \frac{\alpha_H\sigma^2}{NTD_N}\sum_{i = 1}^{N}
 \int_{0}^{T}\int_{0}^{t}|b'(X_{t}^{i})|
 \exp\left(I_N\int_{s}^{t}b'(X_{u}^{i})du\right)|t - s|^{2H - 2}dsdt\\
 & \leqslant &
 \frac{\alpha_H\sigma^2}{TD_N}
 \|b'\|_{\infty}M_N
 \int_{0}^{T}\int_{0}^{t}|t - s|^{2H - 2}dsdt
 \leqslant
 \frac{\sigma^2\|b'\|_{\infty}}{2T}
 T^{2H}\frac{M_N}{D_N}
 \leqslant
 \mathfrak c_1\quad {\rm with}\quad
 \mathfrak c_1 =
 \frac{\mathfrak c}{2T\overline\alpha_H\|b'\|_{\infty}}.
\end{eqnarray*}
Consider $n\in\mathbb N^*$. Thanks to a well-known consequence of Picard's fixed point theorem, for every $x\in\mathbb R_+$,
\begin{displaymath}
|(\underbrace{\Phi_N\circ\dots\circ\Phi_N}_{\textrm{$n$ times}})(x) - R_N|
\leqslant\frac{\mathfrak c^n}{1 -\mathfrak c}|\Phi_N(x) - x|
\quad {\rm on}\quad\Omega_N.
\end{displaymath}
So,
\begin{eqnarray*}
 |R_{N,n} - R_N|\mathbf 1_{\Omega_N}
 & = &
 |(\Phi_N\circ\dots\circ\Phi_N)(R_{N,0}) - R_N|\mathbf 1_{\Omega_N}\\
 & \leqslant &
 \frac{\mathfrak c^n}{1 -\mathfrak c}|\Phi_N(0)|\mathbf 1_{\Omega_N}
 \leqslant
 \mathfrak c_2\mathfrak c^n
 \quad {\rm with}\quad
 \mathfrak c_2 =
 \frac{\mathfrak c_1}{1 -\mathfrak c}.
\end{eqnarray*}
First, if $\mathcal R_N =\emptyset$ and $T$ satisfies (\ref{consistency_computable_estimator_1}), then
\begin{eqnarray*}
 |\widetilde\theta_{N,\psi(N)}^{\mathfrak c} -\theta_0|
 & \leqslant &
 |R_{N,\psi(N)} - R_N|\mathbf 1_{\Omega_N} +
 |\widetilde\theta_{N}^{\mathfrak c} -\theta_0|\\
 & \leqslant &
 \frac{1}{\sqrt N} +
 |\widetilde\theta_{N}^{\mathfrak c} -\theta_0|
 \xrightarrow[N\rightarrow\infty]{\mathbb P} 0
\end{eqnarray*}
by Proposition \ref{consistency_computable_estimator}. Now, assume that $T$ satisfies (\ref{risk_bound_computable_estimator_1}). One the one hand,
\begin{eqnarray*}
 \mathbb E[(\widetilde\theta_{N,\psi(N)}^{\mathfrak c,\mathfrak d} -\theta_0)^2]
 & \leqslant &
 2\mathbb E[(R_{N,\psi(N)} - R_N)^2\mathbf 1_{\{D_N\geqslant\mathfrak d\}\cap\Omega_N}] +
 2\mathbb E[(\widetilde\theta_{N}^{\mathfrak c,\mathfrak d} -\theta_0)^2]\\
 & \leqslant &
 \frac{2(1 +\mathfrak c_{\ref{risk_bound_computable_estimator}}\mathfrak d^{-2})}{N}
 \left(1 +\frac{|\mathcal R_N|}{N}\right)
\end{eqnarray*}
by Proposition \ref{risk_bound_computable_estimator}. On the other hand, if $|\mathcal R_N| = o(N^2)$, by following the same line than in the second step of the proof of Proposition \ref{risk_bound_BM}, thanks to the bounds on
\begin{displaymath}
\mathbb E[(D_N -\|b\|_{f}^{2})^2]
\quad {\rm and}\quad
\mathbb E[(\widetilde\theta_{N,\psi(N)}^{\mathfrak c,\mathfrak d} -\theta_0)^2]
\end{displaymath}
previously established, $\widetilde\theta_{N,\psi(N)}^{\mathfrak c,\mathfrak d}$ and $\widetilde\theta_{N,\psi(N)}^{\mathfrak c}$ are consistent.
\end{proof}
%

% Section : A few elements in the case H < 1/2.

%
\section{A few elements in the case $H < 1/2$}\label{section_fBm_rough}
Throughout this section, $H\in (1/3,1/2)$ and
\begin{equation}\label{SDE_fBm_rough}
X_{t}^{i} = x_0 +\theta_0\int_{0}^{t}b(X_{s}^{i})ds +\sigma B_{t}^{i}
\textrm{ $;$ }t\in [0,T]\textrm{, }i\in\{1,\dots,N\},
\end{equation}
where $b\in C^2(\mathbb R)$, $b'$ and $b''$ are bounded, and $\sigma\neq 0$. Under these conditions, Equation (\ref{SDE_fBm_rough}) has a unique (pathwise) solution $(X_{t}^{1},\dots,X_{t}^{N})_{t\in [0,T]}$. First, as for $H > 1/2$, the (usual) law of large numbers together with Equality (\ref{zero_mean_Skorokhod_integral}) ensure that the auxiliary estimator
\begin{displaymath}
\widehat\theta_N :=
\left(\sum_{i = 1}^{N}\int_{0}^{T}b(X_{s}^{i})^2ds\right)^{-1}
\left(\sum_{i = 1}^{N}\int_{0}^{T}b(X_{s}^{i})\delta X_{s}^{i}\right)
\end{displaymath}
is consistent when
\begin{eqnarray*}
 \mathcal R_N & := &
 \{(i,k)\in\{1,\dots,N\}^2 : i\neq k
 \textrm{ and $X^i$ is not independent of $X^k$}\}\\
 & = &
 \{(i,k)\in\{1,\dots,N\}^2 : i\neq k
 \textrm{ and $B^i$ is not independent of $B^k$}\}\\
 & = &
 \{(i,k)\in\{1,\dots,N\}^2 : i\neq k\textrm{ and }R_{i,k}\neq 0\} =\emptyset,
\end{eqnarray*}
and the following proposition provides a suitable risk bound on the truncated auxiliary estimator
\begin{displaymath}
\widehat\theta_{N}^{\mathfrak d} :=
\widehat\theta_N\mathbf 1_{D_N\geqslant\mathfrak d}
\quad {\rm with}\quad
\mathfrak d\in\Delta_f =\left(0,\frac{\|b\|_{f}^{2}}{2}\right]
\end{displaymath}
thanks to Hu et al. \cite{HNZ19}, Proposition 4.4.(1).
%

% Proposition : Risk bound (H < 1/2).

%
\begin{proposition}\label{risk_bound_fBm_rough}
If $\theta_0 > 0$ and $b$ satisfies the dissipativity condition (\ref{dissipativity_condition}), then
\begin{displaymath}
\mathbb E[(\widehat\theta_{N}^{\mathfrak d} -\theta_0)^2]
\leqslant\frac{\mathfrak c_{\ref{risk_bound_fBm_rough}}}{N}
\left(1 +\frac{|\mathcal R_N|}{N}\right),
\end{displaymath}
where
\begin{displaymath}
\mathfrak c_{\ref{risk_bound_fBm_rough}} :=
\frac{1}{\mathfrak d^2}\left(
\sigma^2\overline{\mathfrak c}_{\ref{risk_bound_fBm_rough}}
\frac{1 + T^{2H} + T^{4H}}{T^{2 - 2H}} +
\theta_{0}^{2}\|b^2\|_{f}^{2}\right)
\end{displaymath}
and $\overline{\mathfrak c}_{\ref{risk_bound_fBm_rough}}$ is a positive constant not depending $N$ and $T$.
\end{proposition}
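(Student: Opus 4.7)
The plan is to mirror the proof of Proposition \ref{risk_bound_fBm}, replacing only the key ingredient that bounds the second moment of a Skorokhod integral of a functional of $X^i$ with respect to $B^i$. First, since $dX_t^i = \theta_0 b(X_t^i)dt + \sigma dB_t^i$ pathwise for every $i \in \{1,\dots,N\}$, the definition of $\widehat\theta_N$ gives the decomposition
\begin{displaymath}
\widehat\theta_N = \theta_0 + \frac{U_N}{D_N}
\quad\textrm{with}\quad
U_N = \frac{\sigma}{NT}\sum_{i = 1}^{N}\int_{0}^{T} b(X_s^i) \delta B_s^i,
\end{displaymath}
exactly as in the opening of the proof of Proposition \ref{risk_bound_fBm}. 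By the truncation inequality used there,
\begin{displaymath}
\mathbb E[(\widehat\theta_{N}^{\mathfrak d} -\theta_0)^2]
\leqslant \frac{1}{\mathfrak d^2}\bigl[\mathbb E(U_N^2) + \theta_0^2 \mathbb E[(D_N - \|b\|_f^2)^2]\bigr],
\end{displaymath}
which relies only on $\mathfrak d \in \Delta_f$ and Markov's inequality; this step carries over verbatim.

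Next, I would control $\mathbb E(U_N^2)$ by expanding the square and splitting into the diagonal sum (of $N$ terms) and the off-diagonal sum (over $\mathcal R_N$). Cauchy--Schwarz on each off-diagonal term reduces both pieces to the single uniform bound on $\mathbb E[(\int_0^T b(X_s^1)\delta B_s^1)^2]$. In the rough regime $H \in (1/3,1/2)$, this is the role played by Hu, Nualart, Zhou \cite{HNZ19}, Proposition 4.4.(1): under the dissipativity condition (\ref{dissipativity_condition}) on $b$ and $\theta_0 > 0$ (which ensures the ergodic-type moment controls used there, and which makes $b'$, $b''$ and the exponentials of $\theta_0 \int b'(X_\cdot^1)$ uniformly controlled), the referenced bound yields
\begin{displaymath}
\mathbb E\left[\left(\int_{0}^{T} b(X_s^1)\delta B_s^1\right)^2\right]
\leqslant
\overline{\mathfrak c}_{\ref{risk_bound_fBm_rough}}\, T^{2H}\, (1 + T^{2H} + T^{4H})
\end{displaymath}
for a constant $\overline{\mathfrak c}_{\ref{risk_bound_fBm_rough}} > 0$ depending only on $H$, on $\|b'\|_\infty$, $\|b''\|_\infty$ and on the dissipativity constant $c$ in (\ref{dissipativity_condition}), but not on $N$ or $T$. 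Plugging this into the split estimate produces
\begin{displaymath}
\mathbb E(U_N^2) \leqslant \frac{\sigma^2 \overline{\mathfrak c}_{\ref{risk_bound_fBm_rough}}(1 + T^{2H} + T^{4H})}{T^{2 - 2H}} \cdot \frac{1}{N}\left(1 + \frac{|\mathcal R_N|}{N}\right),
\end{displaymath}
which is exactly the rough-case analogue of the bound obtained in the proof of Proposition \ref{risk_bound_fBm} via Proposition \ref{bound_variance_Skorokhod}.

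Finally, the bound on $\mathbb E[(D_N - \|b\|_f^2)^2]$ is purely pathwise in $X^i$ and independent of any stochastic calculus, so the estimate
\begin{displaymath}
\mathbb E[(D_N - \|b\|_f^2)^2] \leqslant \frac{\|b^2\|_f^2}{N}\left(1 + \frac{|\mathcal R_N|}{N}\right)
\end{displaymath}
established in (\ref{risk_bound_BM_2}) applies unchanged here (noting that $|b|^\alpha \in \mathbb L^2(\mathbb R, f(x)dx)$ for every $\alpha \geqslant 0$ holds in the rough case too, by the Gaussian-type density bound (\ref{Gaussian_bound_density_fBm}) which remains valid for $H \in (1/3,1/2)$). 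Assembling these two contributions in the truncation inequality yields the announced bound with the stated constant $\mathfrak c_{\ref{risk_bound_fBm_rough}}$.

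The only delicate step is the second paragraph: the precise form of \cite{HNZ19}, Proposition 4.4.(1) must be checked to produce the factor $T^{2H}(1 + T^{2H} + T^{4H})$ with a constant that is independent of $T$ and depends on $b$ only through quantities controlled by $\|b'\|_\infty$, $\|b''\|_\infty$ and $c$. Once this uniform-in-$T$ second-moment bound is in hand, the remainder of the argument is a direct transcription of the proof of Proposition \ref{risk_bound_fBm}.
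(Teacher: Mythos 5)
Your proposal follows essentially the same route as the paper's own proof: the same decomposition $\widehat\theta_N = \theta_0 + U_N/D_N$, the same truncation inequality, the same diagonal/off-diagonal split with Cauchy--Schwarz, the appeal to Hu--Nualart--Zhou \cite{HNZ19}, Proposition 4.4.(1) for the uniform second-moment bound $\overline{\mathfrak c}\,T^{2H}(1+T^{2H}+T^{4H})$ on the Skorokhod integral (which is precisely where $\theta_0>0$ and the dissipativity condition enter), and the reuse of the bound (\ref{risk_bound_BM_2}) on $\mathbb E[(D_N-\|b\|_f^2)^2]$. The arithmetic assembling these pieces into the stated constant $\mathfrak c_{\ref{risk_bound_fBm_rough}}$ checks out, so the proof is correct and matches the paper's argument.
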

%

% Proof.

%
\begin{proof}
First of all, since $dX_{t}^{i} =\theta_0b(X_{t}^{i})dt +\sigma dB_{t}^{i}$ for every $i\in\{1,\dots,N\}$,
\begin{displaymath}
\widehat\theta_N =
\theta_0 +\frac{U_N}{D_N}
\quad {\rm with}\quad
U_N =
\frac{\sigma}{NT}\sum_{i = 1}^{N}\int_{0}^{T}b(X_{s}^{i})\delta B_{s}^{i}.
\end{displaymath}
By Cauchy-Schwarz's inequality and Hu et al. \cite{HNZ19}, Proposition 4.4.(1), there exists a constant $\mathfrak c_1 > 0$, not depending on $N$ and $T$, such that
\begin{eqnarray*}
 \mathbb E(U_{N}^{2}) & = &
 \frac{\sigma^2}{N^2T^2}
 \sum_{i = 1}^{N}\mathbb E\left[
 \left(\int_{0}^{T}b(X_{s}^{i})\delta B_{s}^{i}\right)^2\right]\\
 & &
 \hspace{2cm} +
 \frac{\sigma^2}{N^2T^2}\sum_{(i,k)\in\mathcal R_N}
 \mathbb E\left(\left(\int_{0}^{T}b(X_{s}^{i})\delta B_{s}^{i}\right)
 \left(\int_{0}^{T}b(X_{s}^{k})\delta B_{s}^{k}\right)\right)\\
 & \leqslant &
 \underbrace{\sigma^2\mathfrak c_1
 \frac{1 + T^{2H} + T^{4H}}{T^{2 - 2H}}}_{=:\mathfrak c_2}
 \cdot\frac{1}{N}\left(1 +\frac{|\mathcal R_N|}{N}\right).
\end{eqnarray*}
As in the proof of Proposition \ref{risk_bound_BM},
\begin{displaymath}
\mathbb E[(D_N -\|b\|_{f}^{2})^2]
\leqslant
\frac{\|b^2\|_{f}^{2}}{N}\left(1 +\frac{|\mathcal R_N|}{N}\right),
\end{displaymath}
and then
\begin{eqnarray*}
 \mathbb E[(\widehat\theta_{N}^{\mathfrak d} -\theta_0)^2]
 & \leqslant &
 \frac{1}{\mathfrak d^2}[\mathbb E(U_{N}^{2}) +
 \theta_{0}^{2}\mathbb E[(D_N -\|b\|_{f}^{2})^2]]\\
 & \leqslant &
 \frac{\mathfrak c_3}{N}\left(1 +\frac{|\mathcal R_N|}{N}\right)
 \quad {\rm with}\quad
 \mathfrak c_3 =
 \frac{\mathfrak c_2 +\theta_{0}^{2}\|b^2\|_{f}^{2}}{\mathfrak d^2}.
\end{eqnarray*}
\end{proof}
\noindent
Now, let us consider $p\in (1/H,3)$, $\Delta_T :=\{(s,t)\in [0,T]^2 : s < t\}$, $i\in\{1,\dots,N\}$ and the enhanced fractional Brownian motion $\mathbf B^i = (B^i,\mathbb B^i)$, where $\mathbb B^i = (\mathbb B_{s,t}^{i})_{(s,t)\in\Delta_T}$ is the stochastic process defined by
\begin{displaymath}
\mathbb B_{s,t}^{i} :=
\int_{s < r_1 < r_2 < t}dB_{r_1}^{i}dB_{r_2}^{i} =
\frac{(B_{t}^{i} - B_{s}^{i})^2}{2}
\textrm{ $;$ }\forall (s,t)\in\Delta_T.
\end{displaymath}
For every $(s,t)\in\Delta_T$,
\begin{displaymath}
X_{t}^{i} - X_{s}^{i} = (X_{s}^{i})'(B_{t}^{i} - B_{s}^{i}) +\texttt R_{t}^{i} -\texttt R_{s}^{i},
\end{displaymath}
where $(X^i)'\equiv\sigma$ is the Gubinelli derivative of $X^i$, and
\begin{displaymath}
\texttt R^i : t\in [0,T]\longmapsto
\theta_0\int_{0}^{t}b(X_{s}^{i})ds
\end{displaymath}
is a continuous function of finite $1$-variation (i.e.
\begin{displaymath}
\|\texttt R^i\|_{1\textrm{-var},T} :=
\sup_{(t_j)\textrm{ \tiny dissection of }[0,T]}\sum_j|\texttt R_{t_{j + 1}}^{i} -\texttt R_{t_j}^{i}| <\infty).
\end{displaymath}
In other words, the paths of $X^i$ are controlled by those of $B^i$ (see Friz \& Hairer \cite{FH14}, Definition 4.6). As established in Friz \& Hairer \cite{FH14}, Section 7.3, there exists a continuous function $\texttt R_{b}^{i} : [0,T]\rightarrow\mathbb R$, of finite $p/2$-variation (i.e.
\begin{displaymath}
\|\texttt R_{b}^{i}\|_{\frac{p}{2}\textrm{-var},T} :=
\sup_{(t_j)\textrm{ \tiny dissection of }[0,T]}\left(
\sum_j|\texttt R_{b}^{i}(t_{j + 1}) -\texttt R_{b}^{i}(t_j)|^{\frac{p}{2}}\right)^{\frac{2}{p}} <\infty),
\end{displaymath}
such that
\begin{displaymath}
b(X_{t}^{i}) - b(X_{s}^{i}) = b(X^i)_s'(B_{t}^{i} - B_{s}^{i}) +\texttt R_{b}^{i}(t) -\texttt R_{b}^{i}(s)
\textrm{ $;$ }\forall (s,t)\in\Delta_T
\end{displaymath}
with
\begin{displaymath}
b(X^i)_s' = b'(X_{s}^{i})(X_{s}^{i})' =\sigma b'(X_{s}^{i})
\end{displaymath}
for every $s\in [0,T]$. So, the paths of $Y^i = (b(X_{s}^{i}))_{s\in [0,T]}$ are also controlled by those of $B^i$, and then the pathwise integral (in the sense of rough paths) of $Y^i$ with respect to $\mathbf B^i$ on $[0,T]$ is well-defined (see Friz \& Hairer \cite{FH14}, Theorem 4.10):
\begin{displaymath}
\int_{0}^{T}b(X_{s}^{i})d\mathbf B_{s}^{i} :=
\lim_{\pi(D)\rightarrow 0}\sum_{[u,v]\in D}(b(X_{u}^{i})(B_{v}^{i} - B_{u}^{i}) +
b(X^i)_u'\mathbb B_{u,v}^{i})
\quad\textrm{ for any dissection $D$ of $[0,T]$.}
\end{displaymath}
Moreover, assume that
\begin{equation}\label{conditions_Song_Tindel}
\mathbb E(\|\mathbf D_{0}^{i}Y^i\|_{p\textrm{-var},T}^{2}) <\infty
\quad {\rm and}\quad
\mathbb E(\|\mathbf D^iY^i\|_{p\textrm{-var},[0,T]^2}^{2}) <\infty,
\end{equation}
and let us recall that $R$ is the covariance function of the fractional Brownian motion. Under condition (\ref{conditions_Song_Tindel}), by Song \& Tindel \cite{ST22}, Theorem 3.1,
\begin{eqnarray*}
 \int_{0}^{T}b(X_{s}^{i})\delta B_{s}^{i}
 & = &
 \int_{0}^{T}b(X_{s}^{i})d\mathbf B_{s}^{i} -\frac{1}{2}\int_{0}^{T}b(X^i)_s'dV(s) -
 \int_{0 < t_1 < t_2 < T}
 (\mathbf D_{t_1}^{i}Y_{t_2}^{i} - b(X^i)_{t_2}')dR(t_1,t_2)\\
 & = &
 \int_{0}^{T}b(X_{s}^{i})d\mathbf B_{s}^{i} -\frac{\sigma}{2}\int_{0}^{T}b'(X_{s}^{i})dV(s)\\
 & &
 \hspace{1.5cm} -
 \sigma\int_{0 < t_1 < t_2 < T}
 \left(b'(X_{t_2}^{i})\exp\left(\theta_0\int_{t_1}^{t_2}b'(X_{u}^{i})du\right) -
 b'(X_{t_2}^{i})\right)dR(t_1,t_2),
\end{eqnarray*}
where $V(t) := R(t,t)$ for every $t\in [0,T]$. Then, as for $H > 1/2$, since $\widehat\theta_N$ is a converging estimator of $\theta_0$,
\begin{displaymath}
\widehat\theta_N - I_N =\Phi_N(\theta_0 - I_N)
\approx\Phi_N(\widehat\theta_N - I_N),
\end{displaymath}
where
\begin{eqnarray*}
 \Phi_N(r)
 & := &
 -\frac{\sigma^2}{NTD_N}\left[
 \frac{1}{2}\sum_{i = 1}^{N}\int_{0}^{T}b'(X_{s}^{i})dV(s)\right.\\
 & &
 \hspace{1cm}\left. +
 \sum_{i = 1}^{N}
 \int_{0 < t_1 < t_2 < T}
 \left(b'(X_{t_2}^{i})\exp\left((r + I_N)\int_{t_1}^{t_2}b'(X_{u}^{i})du\right) -
 b'(X_{t_2}^{i})\right)dR(t_1,t_2)\right]
\end{eqnarray*}
and, by the change of variable formula for the rough integral,
\begin{eqnarray*}
 I_N & := &
 \frac{1}{NTD_N}\sum_{i = 1}^{N}\int_{0}^{T}b(X_{s}^{i})dX_{s}^{i}\\
 & = &
 \frac{1}{NTD_N}\sum_{i = 1}^{N}(\texttt b(X_{T}^{i}) -\texttt b(x_0))
 \quad {\rm with}\quad
 \texttt b'(.) = b(.).
\end{eqnarray*}
This legitimates to consider the estimator $\widetilde\theta_N := I_N + R_N$ of $\theta_0$, where $R_N$ is the fixed point (when it exists and is unique) of the map $\Phi_N$. Unfortunately, up to our knowledge, the only control of the 2D Young integral (see Friz \& Victoir \cite{FV10}, Sections 5.5 and 6.4) involved in the definition of $\Phi_N$ is the Young-L\'oeve-Towghi inequality (see Friz and Victoir \cite{FV10}, Theorem 6.18), which seems not sharp enough in order to establish the existence and the uniqueness of $\widetilde\theta_N$ under a condition as simple as (\ref{existence_uniqueness_R_N_1}). This difficulty deserves further investigations, but it's out of the scope of the present paper.
%

% Section : Numerical experiments.

%
\section{Numerical experiments}\label{section_numerical_experiments}
In this section, $H\in (1/2,1)$, and our computable estimator of $\theta_0$ is evaluated on the two following models:
\begin{enumerate}
 \item $dX_t = \theta_0b_1(X_t)dt + 0.25dB_t$ with $T = 0.1$, $x_0 = 5$, $\theta_0 = 1$ and $b_1 :=\pi -\arctan$.
 \item $dX_t = \theta_0b_2(X_t)dt + dB_t$ with $T = 0.75$, $x_0 = 5$, $\theta_0 = 1$ and $b_2 := -{\rm Id}_{\mathbb R}$.
\end{enumerate}
Both Model 1 and Model 2 are simulated via the Euler method along the dissection $(t_0,\dots,t_{\nu})$ of constant mesh $T/\nu$ with $\nu = 20$. Moreover, the map $\Phi_N$ and the bounds of the asymptotic confidence interval (ACI) in Proposition \ref{ACI_fBm} are both approximated by replacing all (usual) integrals by the corresponding Riemann's sums along the dissection $(t_0,\dots,t_{\nu})$ in their definitions. According to Proposition \ref{convergence_approximate_estimator}, the fixed point $R_N$ of $\Phi_N$ is approximated by $(\Phi_N\circ\dots\circ\Phi_N)(0)$ ($n = 30$ times).
\\
\\
First, for each model, with $H = 0.7$ and $H = 0.9$, $\widetilde\theta_N$ is computed from $N = 1,\dots,50$ paths of the process $X$. This experiment is repeated $100$ times. The means and the standard deviations of the error $|\widetilde\theta_{50} -\theta_0|$ are stored in Table \ref{table_errors}.
\begin{table}[!h]
\begin{center}
\begin{tabular}{|l||c|c|}
 \hline
  & Mean error & Error StD.\\
 \hline
 \hline
 Model 1 ($H = 0.7$) & 0.0386852 & 0.0171819\\
 \hline
 Model 1 ($H = 0.9$) & 0.0138193 & 0.0155206\\
 \hline
 \hline
 Model 2 ($H = 0.7$) & 0.0489564 & 0.0335672\\
 \hline
 Model 2 ($H = 0.9$) & 0.0186479 & 0.0139259\\
 \hline
\end{tabular}
\medskip
\caption{Means and StD. of the error of $\widetilde\theta_{50}$ (100 repetitions).}\label{table_errors}
\end{center}
\end{table}
The mean error of $\widetilde\theta_{50}$ is small in the four situations: lower than $4.9\cdot 10^{-2}$. Since the error standard deviation of $\widetilde\theta_{50}$ is also small in each situations ($< 3.4\cdot 10^{-2}$), on average, the error of $\widetilde\theta_{50}$ for one repetition of the experiment should be near to its mean error. Note also that for both Model 1 and Model 2, the mean error of $\widetilde\theta_{50}$ is higher when $H = 0.7$ than when $H = 0.9$; probably because $H$ controls the H\"older exponent of the paths of the fractional Brownian motion. In Figure \ref{plots_Model_1} (resp. Figure \ref{plots_Model_2}), for $N = 1,\dots,50$, $\widetilde\theta_N$ and the bounds of the $95\%$-ACI are plotted for one of the 100 datasets generated from Model 1 (resp. Model 2). These figures illustrate both that $\widetilde\theta_N\rightarrow\theta_0$ and the rate of convergence.
\begin{figure}[H]
\begin{minipage}[b]{0.4\linewidth}
 \centering
 \includegraphics[width=7cm,height=6cm]{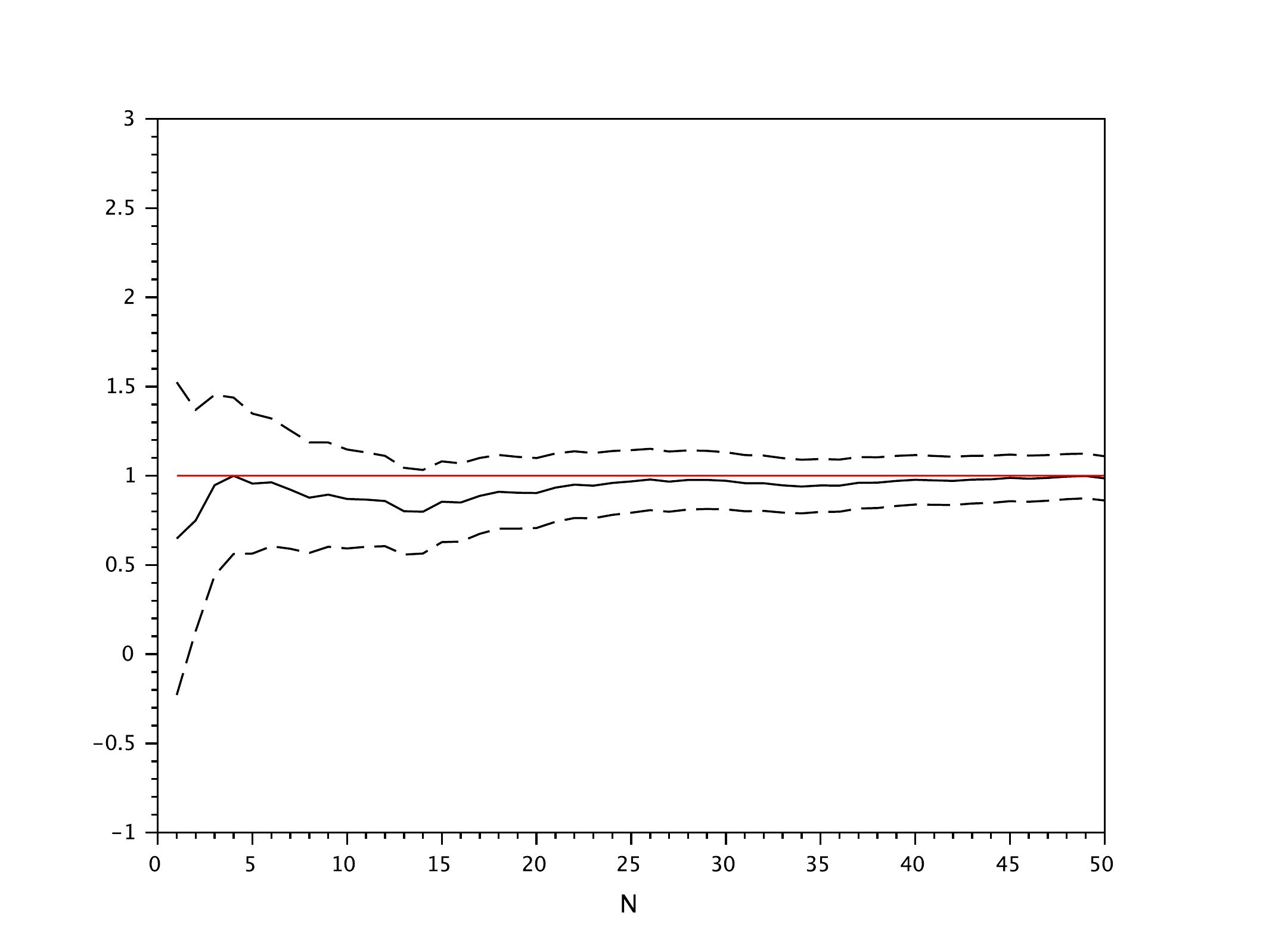}
\end{minipage}
\begin{minipage}[b]{0.4\linewidth}
 \centering
 \includegraphics[width=7cm,height=6cm]{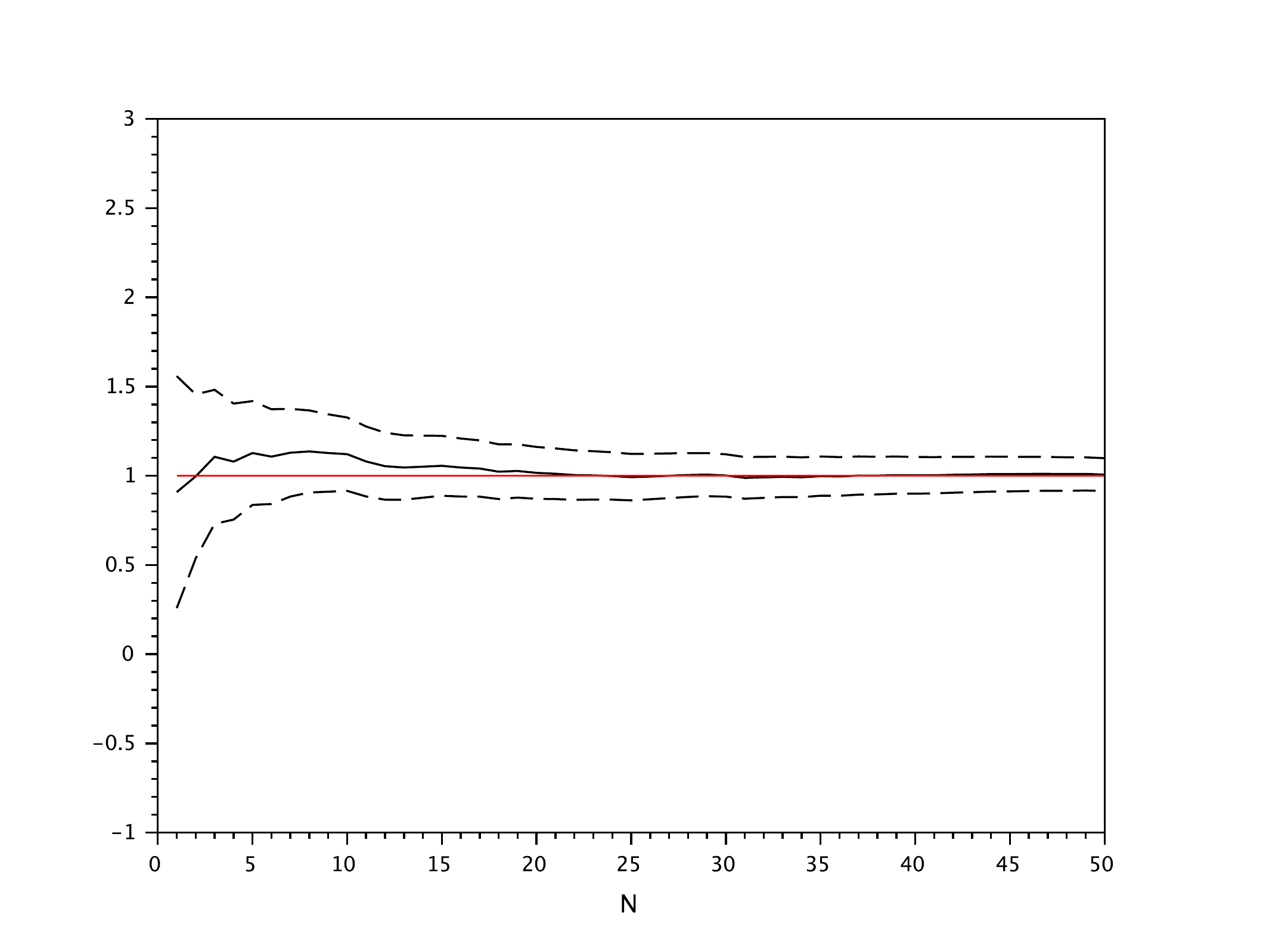}
\end{minipage}
\caption{Plots of $N\mapsto\widetilde\theta_N$ (black line) and of the bounds of the $95\%$-ACIs (dashed black lines) for Model 1 with $H = 0.7$ (left) and $H = 0.9$ (right).}
\label{plots_Model_1}
\end{figure}
\begin{figure}[H]
\begin{minipage}[b]{0.4\linewidth}
 \centering
 \includegraphics[width=7cm,height=6cm]{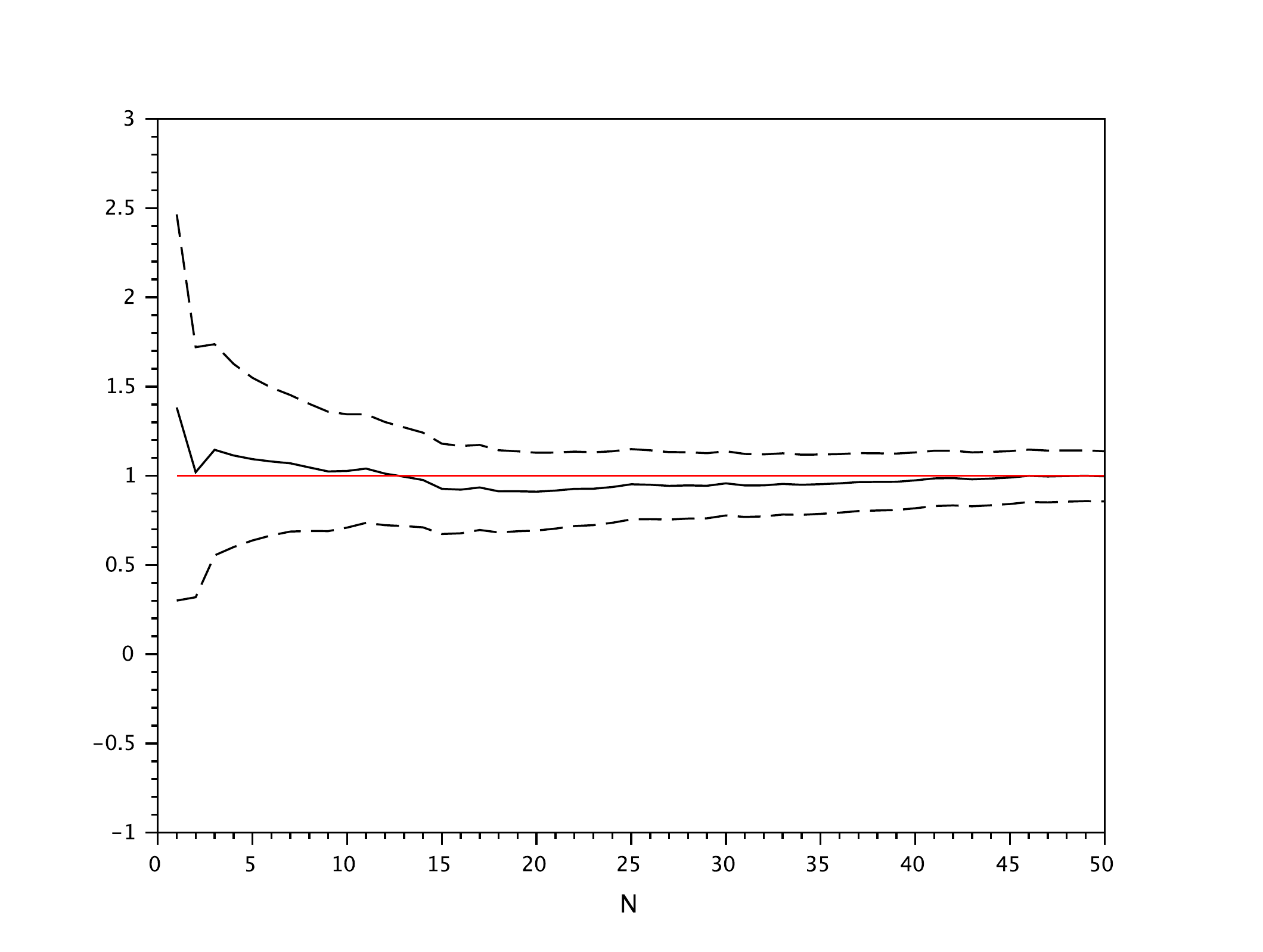}
\end{minipage}
\begin{minipage}[b]{0.4\linewidth}
 \centering
 \includegraphics[width=7cm,height=6cm]{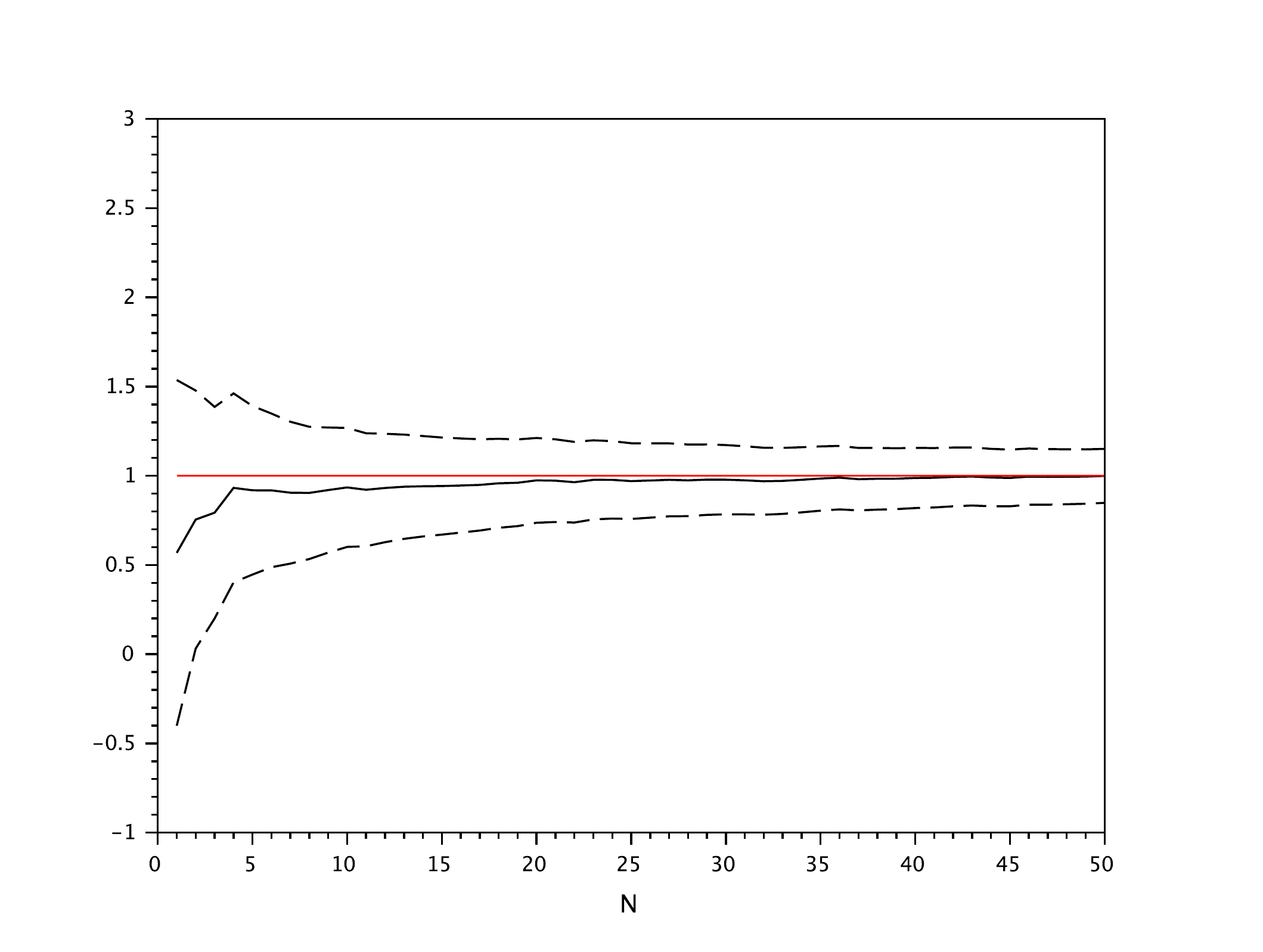}
\end{minipage}
\caption{Plots of $N\mapsto\widetilde\theta_N$ (black line) and the bounds of the $95\%$-ACIs (dashed black lines) for Model 2 with $H = 0.7$ (left) and $H = 0.9$ (right).}
\label{plots_Model_2}
\end{figure}
\noindent
Now, let us show that the way to choose the threshold $\mathfrak d$ for the (computable) truncated estimator of $\theta_0$ in Remarks \ref{remark_threshold_fBm_part_1} and \ref{remark_threshold_fBm_part_2}.(2) is also appropriate on the numerical side. Remark \ref{remark_threshold_fBm_part_1}.(1) applies to Model 1 with $\mathfrak d_{\max} =\pi^2/8\approx 1.23$, and Remark \ref{remark_threshold_fBm_part_1}.(2) applies to Model 2 with $\mathfrak d_{\max} = 25/2e^{-2\cdot 0.75}\approx 2.79$. With $H = 0.9$ and $N = 15$ (small), the mean error (for 100 datasets) of the $\mathfrak d$-truncated estimator for Model 1 (resp. Model 2) is plotted in Figure \ref{plots_thresholds} with respect to
\begin{displaymath}
\mathfrak d\in\{0.5 + 0.1k\textrm{ $;$ }k\in\{0,\dots,30\}\}
\quad\textrm{(resp. }
\mathfrak d\in\{1 + 0.5k\textrm{ $;$ }k\in\{0,\dots,30\}\}).
\end{displaymath}
For both Model 1 and Model 2, the threshold $\mathfrak d_{\max}$ recommended in Remark \ref{remark_threshold_fBm_part_2}.(2), which gives reasonable theoretical guarantees, is drawn in red, and one can observe that the mean error of the $\mathfrak d$-truncated estimator (black line) starts to explode with respect to that of $\widetilde\theta_{15}$ (black dashed line) for much higher values of $\mathfrak d$.
\begin{figure}[H]
\begin{minipage}[b]{0.4\linewidth}
 \centering
 \includegraphics[width=7cm,height=6cm]{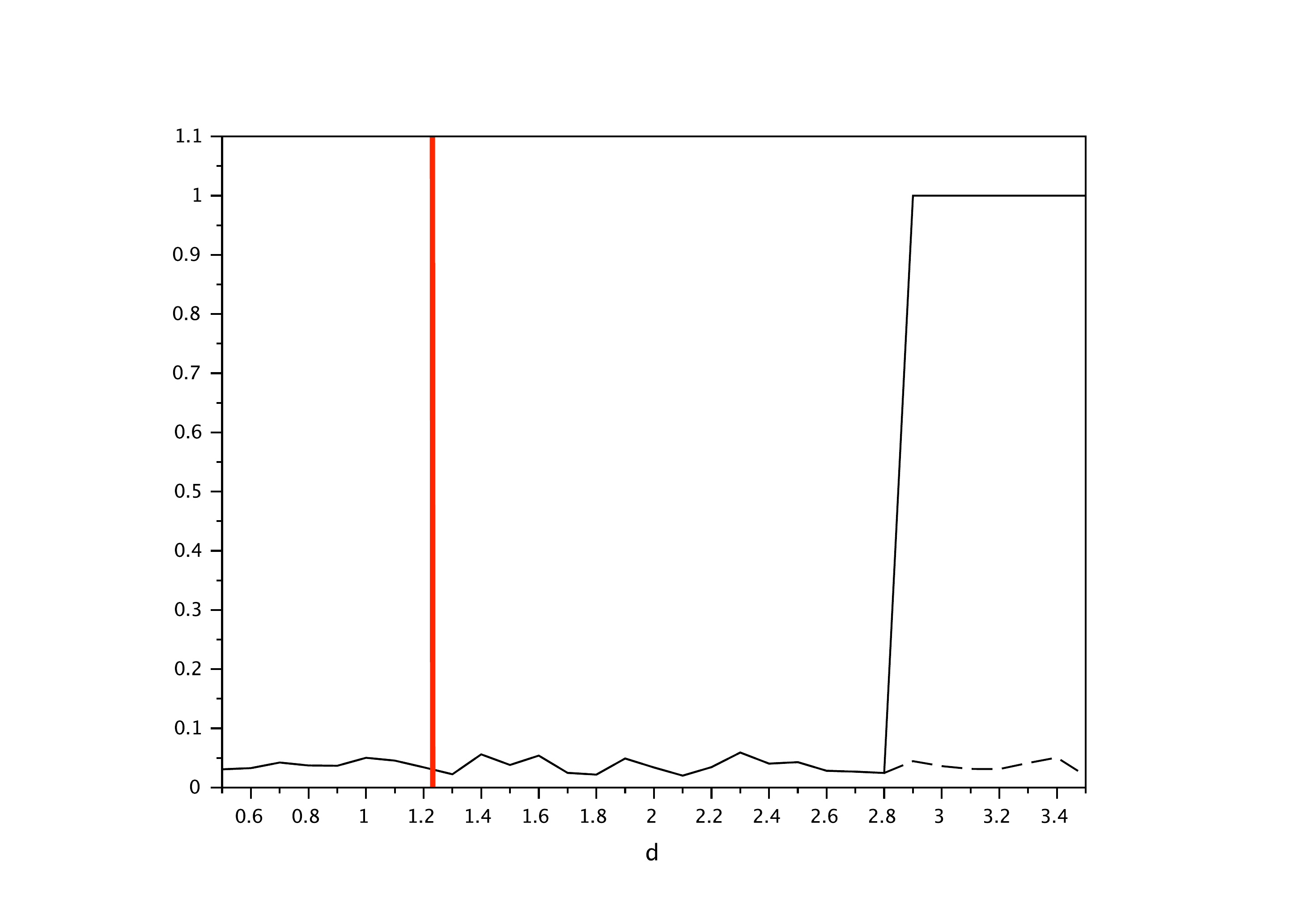}
\end{minipage}
\begin{minipage}[b]{0.4\linewidth}
 \centering
 \includegraphics[width=7cm,height=6cm]{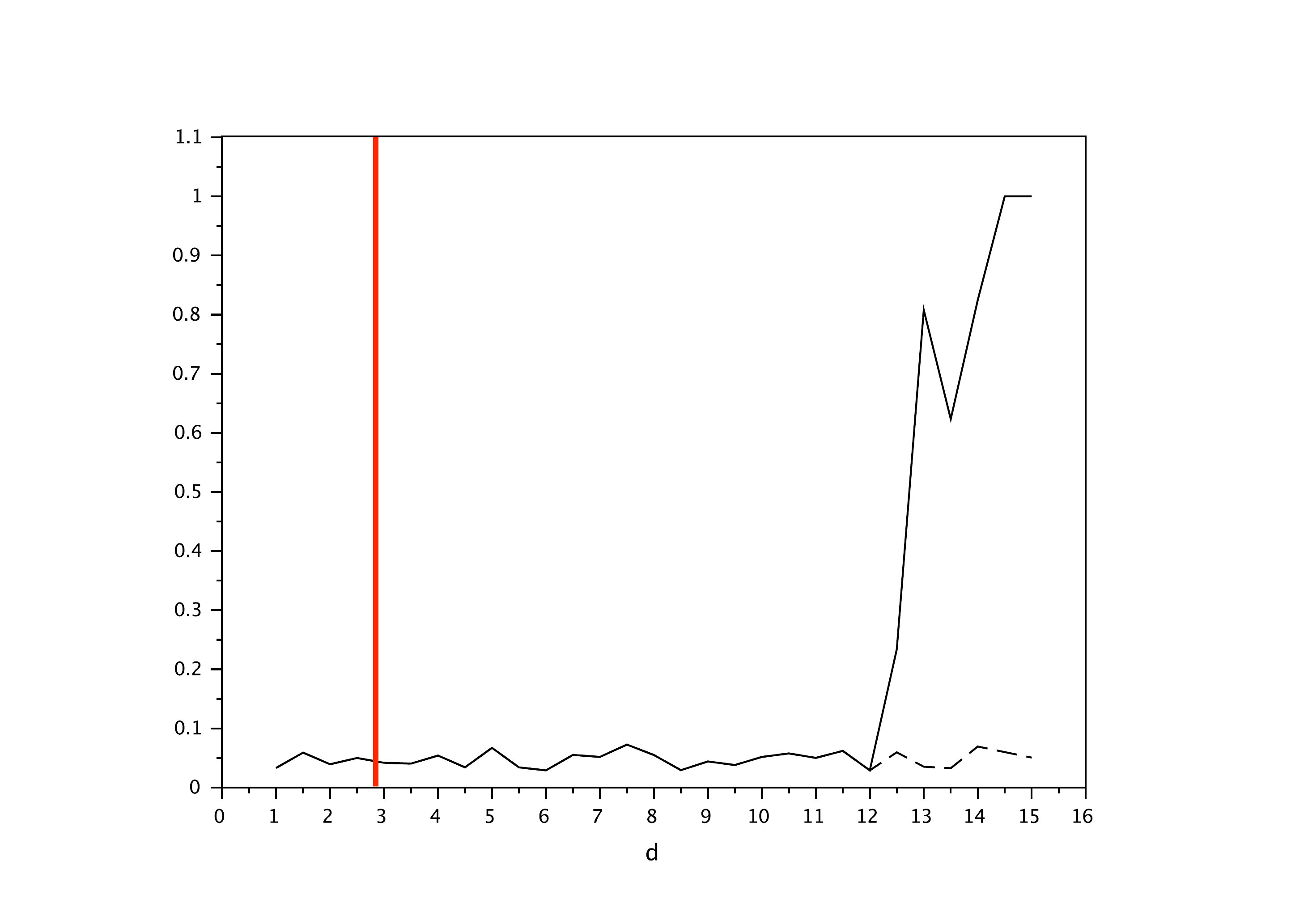}
\end{minipage}
\caption{Plots of the mean error of the $\mathfrak d$-truncated estimator (black line) for Model 1 (left) and Model 2 (right).}
\label{plots_thresholds}
\end{figure}
%

% Conclusion and perspectives.

%
\section{Conclusion and perspectives}
The main contributions of our paper are:
\begin{enumerate}
 \item To provide a computable approximation $\widetilde\theta_N$ of $\widehat\theta_N$ when $H > 1/2$. Such approximation of $\widehat\theta_N$ is interesting in practice because it is well-known that the Skorokhod integral involved in the definition of $\widehat\theta_N$ is not computable when $H\neq 1/2$. For $H\in (1/3,1/2)$, the definition of $\widetilde\theta_N$ is extended in Section \ref{section_fBm_rough}.
 \item To provide an asymptotic confidence interval for $\widehat\theta_N$ (resp. $\widetilde\theta_{N}^{\mathfrak c = 1/2}$) when $H = 1/2$ (resp. $H > 1/2$) and $X^1,\dots,X^N$ are independent (see Proposition \ref{ACI_BM} (resp. Proposition \ref{ACI_fBm})).
 \item To establish risk bounds on $\widehat\theta_{N}^{\mathfrak d}$ and $\widetilde\theta_{N}^{\mathfrak c,\mathfrak d}$ when $X^1,\dots,X^N$ may be dependent (i.e. $\mathcal R_N\neq\emptyset$) (see Propositions \ref{risk_bound_BM}, \ref{risk_bound_fBm}, \ref{risk_bound_computable_estimator} and \ref{risk_bound_fBm_rough}).
 \item To establish a risk bound on a discrete-time approximation of $\widehat\theta_{N}^{\mathfrak d}$ when $H = 1/2$ and $X^1,\dots,X^N$ may be dependent (see Proposition \ref{risk_bound_discrete_time_BM}).
\end{enumerate}
Although it's out of the scope of the present paper which mainly focuses on continuous-time observations based estimators of $\theta_0$, one could study a discrete-time approximation of $\widetilde\theta_N$ as, for instance, $\widetilde\theta_{N,n} = I_N + R_{N,n}$ where $R_{N,n}$ is the fixed point (when it exists and is unique) of a discrete-time approximation of the map $\Phi_N$ introduced in Subsection \ref{section_computable_estimator}. Note also that our computable approximation method for $\widehat\theta_N$ could be extended to the (nonparametric) projection least squares estimator studied in Comte \& Marie \cite{CM21}. For a {\it regular enough} orthonormal family $(\varphi_1,\dots,\varphi_m)$ of $\mathbb L^2(\mathbb R,dx)$ with $m\in\{1,\dots,N\}$, \cite{CM21} deals with the estimator
\begin{displaymath}
\widehat b_m(.) :=\sum_{j = 1}^{m}\widehat\theta_j\varphi_j(.)
\quad {\rm with}\quad
\widehat\theta =
\widehat\Psi_{m}^{-1}\widehat Z_m
\end{displaymath}
of the whole function $b_0(.) =\theta_0b(.)$ in Equation (\ref{main_equation}), where
\begin{displaymath}
\widehat\Psi_m :=\left(\frac{1}{NT}\sum_{i = 1}^{N}\int_{0}^{T}\varphi_j(X_{s}^{i})\varphi_{\ell}(X_{s}^{i})ds\right)_{j,\ell}
\quad {\rm and}\quad
\widehat Z_m :=\left(\frac{1}{NT}\sum_{i = 1}^{N}\int_{0}^{T}\varphi_j(X_{s}^{i})\delta X_{s}^{i}\right)_j.
\end{displaymath}
By Proposition \ref{Skorokhod_Young_relationship} and by the chain rule for the Malliavin derivative, $\widehat Z_m =\gamma_N(b_0)$ with
\begin{displaymath}
\gamma_N(\psi) :=
\left(\frac{1}{NT}\sum_{i = 1}^{N}\left(
\int_{0}^{T}\varphi_j(X_{s}^{i})dX_{s}^{i} -
\alpha_H\sigma^2\int_{0}^{T}\int_{0}^{t}
\varphi_j'(X_{t}^{i})\exp\left[\int_{s}^{t}\psi'(X_{u}^{i})du\right]
|t - s|^{2H - 2}dsdt\right)\right)_j.
\end{displaymath}
Then,
\begin{displaymath}
\widehat b_m =\Gamma_N(b_0)\approx\Gamma_N(\widehat b_m)
\quad {\rm with}\quad
\Gamma_N(\psi)(.) :=\sum_{j = 1}^{m}[\widehat\Psi_{m}^{-1}\gamma_N(\psi)]_j\varphi_j(.).
\end{displaymath}
This legitimates to consider the estimator $\widetilde b_m(.) :=\sum_{j = 1}^{m}\widetilde\theta_j\varphi_j(.)$ of $b_0$ such that $\widetilde b_m =\Gamma_N(\widetilde b_m)$.
%

% References.

%

%
\end{document}